\renewcommand{\algorithmicrequire}
\newcommand\figcaption{\def\@captype{figure}\caption}
\newcommand\tabcaption{\def\@captype{table}\caption}
\newtheorem{remark}{Remark}
\newtheorem{theorem}{Theorem}
\newtheorem{Assumption}{Assumption}
\newtheorem{lemma}{Lemma}
\newtheorem{definition}{Definition}
\newtheorem{corollary}{Corollary}
\newcommand{\beq}{\begin{equation}}
	\newcommand{\eeq}{\end{equation}}
\newcommand{\bea}{\begin{eqnarray}}
	\newcommand{\eea}{\end{eqnarray}}
\newcommand{\beas}{\begin{eqnarray*}}
	\newcommand{\eeas}{\end{eqnarray*}}
\begin{document}

\begin{frontmatter}



\title{Inverse source problems for the stochastic wave equations\tnoteref{1}}
\tnotetext[1]{The work was supported by NSFC Project (12431014).}

\author[a]{Yunqing Huang} 
\ead{huangyq@xtu.edu.cn}
\author[b]{Shihan Zhang\corref{cor1}}
\ead{shihanzhang@smail.xtu.edu.cn}
\cortext[cor1]{Corresponding author}
\affiliation[a]{organization={National Center for Applied Mathematics in Hunan, Key Laboratory of Intelligent Computing $\&$ Information Processing of Ministry of Education, Xiangtan University},
            addressline={}, 
            city={Xiangtan},
            postcode={411105}, 
            state={Hunan},
            country={China}}
\affiliation[b]{organization={School of Mathematics and Computational Science, Xiangtan University},
	addressline={}, 
	city={Xiangtan},
	postcode={411105}, 
	state={Hunan},
	country={China}}

\begin{abstract}
To address the ill-posedness of the inverse source problem for the one-dimensional stochastic Helmholtz equations without attenuation,  this study develops a novel computational framework designed to mitigate this inherent challenge at the numerical implementation level. For the stochastic wave equation driven by a finite-jump Lévy process (assuming that its jump amplitude obeys a Gaussian distribution and the jump time interval obeys a Poisson distribution), this paper firstly establish the existence of a mild solution to its direct problem satisfying a particular stability estimate. Building upon these theoretical foundations, we further investigate the well-posedness of the inverse problem and develop a methodology to reconstruct the unknown source terms $f$ and $g$ using the data of the wave field at the final time point $u(x,T)$.
This work not only provides rigorous theoretical analysis and effective numerical schemes for solving inverse source problems in these two specific classes of stochastic wave equations, but also offers new perspectives and methodological approaches for addressing a broader range of wave propagation inverse problems characterized by non-Gaussian stochastic properties. The proposed framework demonstrates significant relevance for characterizing physical phenomena influenced by jump-type stochastic perturbations, offering promising applications in diverse domains including but not limited to seismic wave propagation analysis and financial market volatility modeling.
\end{abstract}



\begin{keyword}
Stochastic wave equation \sep inverse source problem \sep Stochastic Helmholtz equations without attenuation \sep Finite-jump Lévy processes \sep ill-posedness


\end{keyword}

\end{frontmatter}


\section{Introduction}
The study of the inverse source problem of wave equations is an active and important topic, which essentially utilizes the relevant information measured by wavefield to recover unknown sources. As is well known, there are countless sources of radiation fields that completely disappear outside their supporting volume, so there is no unique solution to the inverse source problem. cf.\cite{devaney2003nonuniqueness}.
By adding any of these non radiative sources to any given solution, an infinite number of solutions can be obtained. Assuming you have an observed electromagnetic field, you want to infer the position and strength of the source. However, in reality, there may be multiple combinations of different charge distributions that can generate the same electromagnetic field. Moreover, these different charge distributions may have different boundary conditions or dielectric properties (which may be "non radiative sources"), and they do not directly generate electromagnetic waves, but by changing the properties or boundary conditions of the medium, the final electromagnetic field performance will be almost the same. In this case, there is no unique solution to the inverse source problem, as you can obtain the same radiation field under different combinations of sources and non radiation sources. Therefore, the inverse source problem is usually ill-posed, and obtaining a method to ensure its posedness is an urgent problem that we need to solve.

The inverse source problem of wave equation in magnetoencephalography\cite{bao2002inverse}, ultrasonics\cite{devaney1983inverse} and photoacoustic tomography\cite{anastasio2007application}, particularly for modeling wave propagation phenomena with random disturbances. In \cite{bao2010multi, bao2011numerical, bao2011inverse}, the authors explored the use of multi-frequency data to ensure the uniqueness and stability of the inverse source problem for the acoustic wave equation, establishing stability estimates based on the radiation field outside the source volume at a set of frequencies. Following these works, significant research has been conducted to improve the stability of various wave equation inverse source problems \cite{bao2020stability, cheng2016increasing, entekhabi2020increasing}. With increasing interest in modeling stochastic perturbations and uncertainties, stochastic wave equations have become a major area of research. The introduction of randomness and uncertainty parameters has transformed deterministic inverse source problems into more complex stochastic inverse source problems. These parameters are particularly useful in modeling unpredictable environmental conditions, incomplete system information, and uncertainties arising from measurement noise \cite{borcea2002imaging}. The study of stochastic inverse problems dates back to \cite{devaney1979inverse}, where the autocorrelation function of the stochastic source was shown to be uniquely determined by the autocorrelation of the radiation field outside the source region. To better characterize system uncertainties, random sources have been incorporated into mathematical modeling, as discussed in \cite{bao2016inverse, bao2017inverse, devaney1979inverse}. However, the nonlinearity and ill-posedness of the inverse problem for random sources present significant challenges, which arise from: a) the need to describe random sources using statistical quantities (e.g., mean, variance, and covariance) rather than deterministic functions; and b) the roughness of random sources, which makes pointwise definition impossible and requires distributional interpretation. As such, only the statistical properties of the random source can be reconstructed from the statistical data of the wave field. However, the suitability of statistical data from the wave field and the random source is heavily dependent on the specific form of the random source, further complicating the inverse problem.

The stochastic Helmholtz equation, as the fundamental equation for steady-state wave propagation, plays a critical role in fields such as acoustics, electromagnetism, and quantum mechanics. In \cite{lassas2008inverse}, a novel inverse stochastic source model, the generalized microlocal isotropic Gaussian random field, was proposed, characterized by a covariance operator modeled as a classical pseudodifferential operator. In \cite{li2017stability}, the stability of the inverse stochastic source problem for a one-dimensional Helmholtz equation driven by white noise in a homogeneous medium was analyzed. In \cite{guanstochastic}, the Helmholtz equation was studied using a stochastic Galerkin method combined with a generalized polynomial chaos (gPC) expansion. The mean and variance of the source were computed using the zero-order and higher-order coefficients of the gPC expansion of the boundary data, and reconstruction was carried out using the inverse sine transform. The inverse problem of a one-dimensional stochastic Helmholtz equation with attenuation, where the source term is a generalized microlocal isotropic Gaussian random field, was addressed in\cite{li2020inverse}, yielding pointwise stable reconstruction of the source term. However, in the case without attenuation, the inverse problem remains ill-posed, prompting further exploration into its resolution and improvement.

In the context of inverse source problems for stochastic wave equations driven by random sources, \cite{li2021inverse} examined the inverse problem for the Helmholtz equation driven by fractional Gaussian fields. In this study, the second-order moments of the wave field in the high-frequency limit uniquely determined the strength function of the random source in two-dimensional and three-dimensional cases. In \cite{feng2021inverse}, the authors investigated stochastic wave equations driven by fractional Brownian motion, employing statistical methods and a truncated regularity approach to reconstruct the source term from final-time data. In \cite{li2022inverse}, the analysis focused on the far-field regime, where the covariance and correlation operators of the source were recovered, providing a unified framework for stochastic acoustic, electromagnetic, elastic, and biharmonic waves. However, to date, no studies have explored the inverse source problem for stochastic wave equations driven by Lévy processes. Lévy processes, with their non-Gaussian characteristics and ability to model jump-type stochastic disturbances, are particularly well-suited for describing real-world phenomena such as seismic wave propagation and financial market fluctuations, which makes them particularly suitable for applications in seismic engineering, acoustical design, and digital signal processing. Moreover, these studies offer valuable insights into the development of stochastic partial differential equation theory, especially in dealing with non-Gaussian random sources. This research extends to broader applications in stochastic systems. Since Lévy processes with finite jumps are more common in real-world systems, this paper addresses the inverse source problem of stochastic wave equations driven by finite-jump Lévy processes, providing several results.

This paper investigates the inverse source problem for one-dimensional stochastic Helmholtz equations without attenuation and stochastic wave equations driven by Lévy processes. The main contributions of this paper are as follows:

We propose a new method for mitigating the ill-posedness of the inverse source problem of one-dimensional stochastic Helmholtz equations without attenuation from a computational perspective.

For stochastic wave equations driven by finite-jump Lévy processes, we demonstrate that the forward problem admits a well-defined mild solution that satisfies certain stability estimate. Additionally, we provide a reconstruction method for the source terms $f$ and $g$ from the final-time data of the wave field $u(x,T)$.

For convenience, it is noteworthy that we assume the jump amplitudes of the finite-jump Lévy process follow a Gaussian distribution, while the jump time intervals follow a Poisson distribution, which is a standard stochastic model. Although the source terms considered in this paper are relatively simple, in practice, jump processes are often divided into large and small jumps, which present greater challenges. This paper serves as an initial step toward understanding these complex cases.

The structure of this paper is as follows. In the following Section 2, we presents the regularity of the fundamental solution of the one-dimensional stochastic Helmholtz equation without attenuation and introduces preliminary knowledge on the stochastic wave equation driven by finite-jump Lévy processes.
Section 3 introduces how to combine efficient numerical methods to address the ill-posedness of the inverse problem for the one-dimensional stochastic Helmholtz equation without attenuation.
Section 4 analyzes the direct problem for the stochastic wave equation driven by finite-jump Lévy processes and provides a reconstruction scheme for the source term function in the inverse problem.
The paper concludes with the findings and outlines directions for future research on the inverse source problem of stochastic wave equations. Acknowledgments are also provided for funding support and constructive suggestions.
\section{Preliminary}
In this section, we will introduce the fundamental solutions and stochastic source forms of the one-dimensional stochastic Helmholtz equation without attenuation, as well as some of their properties. In addition, we will cover related knowledge about finite-jump Lévy processes.

\subsection{One-dimensional stochastic Helmholtz equation without attenuation}
The study of the background of one-dimensional stochastic Helmholtz equations without attenuation involves a variety of problems in physics, mathematics, and engineering. These equations have a wide range of applications, particularly in the modeling of wave problems and the solution of inverse problems.

One-dimensional stochastic Helmholtz equation
\[ \Delta u + (\nu^2 + i \nu \sigma) u = f. \]
where the wavenumber $\nu > 0$ is a parameter describing the propagation characteristics of waves in space, typically defined as the number of wave periods per unit length. The attenuation parameter $\sigma \geq 0$ describes the electrical conductivity of the medium. $u$ denotes the wave field, and $f$ is defined on the complete probability space $(\Omega,\mathcal{F}
,\mathbb{P})$ and denotes a random function assumed to be compactly supported on the bounded domain $M=[0,1]$.

We consider the case where the attenuation parameter $\sigma$ is 0, i.e.,
\begin{equation}
	\Delta u + \nu^2 u = f.
	\label{helmholtz non attenuation}
\end{equation}

The wave field $u$ satisfies the outward radiation boundary condition
\begin{equation}
	u'(0)+i\nu u(0)=0,~~u'(1)-i\nu u(1)=0.
	\label{boundary condition}
\end{equation}
at $x=0$, it behaves as a left-propagating wave; at $x=1$, it behaves as a right-propagating wave. Namely, the wave can only leave the domain and cannot enter the domain or produce reflection.

The random source function $f$ satisfies the following assumption.
\begin{Assumption}
	Let $f$ be a real-valued, centrally symmetric, locally isotropic Gaussian random field of order $-s$ with compact support in $M \subset \mathbb{R}^d$, i.e., the covariance operator $C_f$ of $f$ is a pseudodifferential operator with principal symbol $\mu(x)|\xi|^{-s}$, where $\mu \in C_0^{\infty}(M)$ and $\mu \geq 0$, and $M = (0,1)$.
	\label{Ass:1}
\end{Assumption}

\begin{remark}
	$C_{0}^{\infty}(M)$ is defined as the set of all compactly supported smooth functions (smooth functions) on the domain M.
	The statistical properties of a random field (such as mean and covariance) are entirely determined by the Gaussian distribution; micro-local isotropy refers to the fact that the statistical properties of a random field are isotropic at small scales, i.e., its properties are the same in any direction; the order of a random field is $-s$, meaning that its smoothness or roughness is controlled by the parameter $s$. Specifically, larger values of $s$ correspond to smoother random fields, while smaller values of $s$ correspond to rougher random fields.  
\end{remark}
Combining Assumption \ref{Ass:1}, the random source $f$ can be expressed as
\[ f(x)=\sqrt{\mu(x)}(-\Delta)^{-\frac{s}{4}}W_{x}^{'}. \]
where $W_{x}$ denotes a one-dimensional spatial Wiener process, and $W_{x}^{'}$ denotes spatial white noise. $(-\Delta)^{-\frac{s}{4}}$ is the fractional Laplace operator defined on $\mathbb{R}^d$, which is given by
\[ (-\Delta)^{\alpha}u=\mathcal{F}^{-1}[|\xi|^{2\alpha}\mathcal{F}[u](\xi)], \quad \alpha\in \mathbb{R}. \]

Let $f:\Omega\rightarrow\mathcal{G^{'}}$ be measurable. For any $\phi\in\mathcal{G}$, let the mapping $\omega\mapsto\langle f(\omega),\phi\rangle$ be defined, where $\mathcal{G^{'}}$ is the dual space of $\mathcal{G}$. $C_f:\mathcal{G}\rightarrow\mathcal{G^{'}}$ is given as follows
\[ \langle \varphi , C_f\psi \rangle = \mathbb{E} \left[ \langle f, \varphi \rangle \langle f, \psi \rangle \right], \quad \forall \varphi , \psi \in \mathcal{G}. \]
where $\langle\cdotp,\cdotp\rangle$ denotes the dual product. According to the Schwartz kernel theorem, there exists a unique kernel $K_f$ for $C_f$ such that
\[ \langle \varphi,C_f \psi \rangle = \int_{\mathbb{R}^d} \int_{\mathbb{R}^d} K_f(x, y) \varphi(x) \psi(y) \, dx \, dy. \]
therefore, we obtain the following form of the Schwartz kernel
\[ K_f(x, y)=\mathbb{E} \left[f(x),f(y)\right]. \]

$\mu$ denotes the micro-correlation strength of the random field $f$. According to Assumption \ref{Ass:1}, $C_f$ satisfies
\[ (C_f \psi)(x) = \frac{1}{(2\pi)^d} \int_{\mathbb{R}^d} e^{ix \cdot \xi} \, c(x, \xi) \hat{\psi}(\xi) \, d\xi, \quad   \forall \psi \in \mathcal{G}. \]
where $c(x, \xi)$ has a dominant term $\mu(x)|\xi|^{-s}$ and
\[ \hat{\psi}(\xi) = \mathcal{F}[\psi](\xi) = \int_{\mathbb{R}^d} e^{-ix \cdot \xi} \psi(x) \, dx. \]
is the Fourier transform form of $\psi$.
\begin{align*}
	\langle \varphi, C_f \psi \rangle &= \int_{\mathbb{R}^d} \varphi(x) \left[ \frac{1}{(2\pi)^d} \int_{\mathbb{R}^d} e^{ix \cdot \xi} c(x, \xi) \hat{\psi}(\xi) \, d\xi \right] dx\\
	&= \frac{1}{(2\pi)^d} \int_{\mathbb{R}^d} \varphi(x) \int_{\mathbb{R}^d} e^{ix \cdot \xi} c(x, \xi) \left[ \int_{\mathbb{R}^d} e^{-iy \cdot \xi} \psi(y) \, dy \right] d\xi dx\\
	&= \int_{\mathbb{R}^d} \int_{\mathbb{R}^d} \frac{1}{(2\pi)^d} e^{i(x - y) \cdot \xi} c(x, \xi) \varphi(x) \psi(y) \, dx \, dy.
\end{align*}
Consequently,
\[ K_f(x, y) = \frac{1}{(2\pi)^d} \int_{\mathbb{R}^d} e^{i(x-y) \cdot \xi} c(x, \xi) \, d\xi. \]

Next, we will consider the regularity of random sources. The following lemma shows that the random field $f$ belongs to the Hölder space and Sobolev space respectively under different values of $m$.
\begin{lemma}
	Let f be a locally isotropic Gaussian random field satisfying Assumption \ref{Ass:1}, with order -s and compact support in $M \subset \mathbb{R}^d$. \\
	$(1)$ If $s \in (d,d+2)$, then almost surely $f \in C^{0,\alpha}(M)$ for all $\alpha \in (0,\frac{s-d}{2})$. \\
	$(2)$ If $s \in \left(-\infty, d\right]$, then almost surely $f \in H^{-\frac{d-s}{2}-\epsilon}(M)$ for all sufficiently small $\epsilon>0$.
	\label{f feature}
\end{lemma}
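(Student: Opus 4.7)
The plan is to derive both regularity statements from a careful analysis of the covariance kernel $K_f$, which by Assumption \ref{Ass:1} takes the Fourier-integral form
\[
K_f(x,y) = \frac{1}{(2\pi)^d}\int_{\mathbb{R}^d} e^{i(x-y)\cdot\xi}\, c(x,\xi)\, d\xi,
\]
with principal symbol $\mu(x)|\xi|^{-s}$ modulo lower-order terms. Since $f$ is a centered Gaussian field, all its higher moments are controlled by its variance, so both claims reduce to kernel estimates together with a standard regularity criterion.

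For part (1), I would first establish the variance-increment bound $\mathbb{E}[|f(x)-f(y)|^2] \lesssim |x-y|^{s-d}$ for $s \in (d,d+2)$. Writing
\[
\mathbb{E}[|f(x)-f(y)|^2] = K_f(x,x) - K_f(x,y) - K_f(y,x) + K_f(y,y),
\]
the contribution of the principal symbol reduces, after freezing $\mu(x)$, to the oscillatory integral $\int_{\mathbb{R}^d}(1-\cos((x-y)\cdot\xi))|\xi|^{-s}\,d\xi$. The factor $1-\cos((x-y)\cdot\xi) = O(|\xi|^2|x-y|^2)$ cancels the singularity at the origin (so the integral is convergent since $s<d+2$), while $|\xi|^{-s}$ with $s>d$ is integrable at infinity; the rescaling $\xi \mapsto \xi/|x-y|$ then extracts the factor $|x-y|^{s-d}$. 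Lower-order symbols in $c(x,\xi)$ yield strictly smoother contributions that are absorbed. Because $f$ is Gaussian, this variance bound self-improves to $\mathbb{E}[|f(x)-f(y)|^{2p}] \lesssim |x-y|^{p(s-d)}$ for every integer $p$, and the Kolmogorov–Chentsov continuity theorem then delivers a.s. local Hölder continuity of every order $\alpha < \tfrac{s-d}{2}$ (upon sending $p \to \infty$).

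For part (2), I would bound a Sobolev norm in expectation. Choose $\chi \in C_c^\infty(\mathbb{R}^d)$ equal to $1$ on $\mathrm{supp}(\mu) \subset M$, so that $\chi f = f$ almost surely, and compute
\[
\mathbb{E}\bigl[\|f\|_{H^{t}(\mathbb{R}^d)}^2\bigr] = \int_{\mathbb{R}^d}(1+|\xi|^2)^{t}\, \mathbb{E}\bigl[|\widehat{\chi f}(\xi)|^2\bigr]\, d\xi.
\]
Rewriting $\mathbb{E}[|\widehat{\chi f}(\xi)|^2] = \langle \chi e^{-i\xi\cdot},\, C_f(\chi e^{-i\xi\cdot})\rangle$ and expanding via the principal symbol shows that the expected spectral density decays like $(1+|\xi|)^{-s}$ modulo lower-order corrections. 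The radial integral then converges whenever $2t - s < -d$, i.e.\ $t < -\tfrac{d-s}{2}$. Finite expectation forces $\|f\|_{H^{t}} < \infty$ almost surely, yielding the claim for every sufficiently small $\epsilon > 0$.

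The main obstacle will be the pseudodifferential bookkeeping: in part (1) one must justify that the full symbol $c(x,\xi)$, and not merely its principal part $\mu(x)|\xi|^{-s}$, produces the claimed $|x-y|^{s-d}$ leading behavior; in part (2) one must control $\mathbb{E}[|\widehat{\chi f}(\xi)|^2]$ uniformly in $\xi$, keeping track of the localization $\chi$ and the lower-order symbols in the symbol expansion. Both steps are standard applications of symbol calculus, but some care is needed near the excluded thresholds $s=d$ and $s=d+2$, where the Fourier integrals become borderline.
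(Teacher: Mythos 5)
Your proposal is mathematically sound, but it follows a genuinely different route from the paper. The paper disposes of part (1) by citing Lemma 2.6 of Li--Wang \cite{li2021inverse}, whereas you essentially re-prove that result from scratch: the increment bound $\mathbb{E}|f(x)-f(y)|^2 \lesssim |x-y|^{s-d}$ from the principal symbol (with the correct convergence conditions $s<d+2$ at the origin after rescaling and $s>d$ at infinity), Gaussian moment equivalence, and Kolmogorov--Chentsov. For part (2) the paper does not work with the spectral density at all: it factors $f$ as $\sqrt{\mu}\,\hat f$ with $\hat f$ a unit-strength microlocally isotropic field, invokes the known regularity $\hat f \in W^{-\frac{d-s}{2}-\frac{\epsilon}{2},\hat p}(M)$, applies the Sobolev multiplication lemma of Lassas--P\"aiv\"arinta--Saksman to the product $\sqrt{\mu}\,\hat f$ with conjugate exponents $p',q'$, and then uses embeddings (Adams, Theorem 7.63) to land in $H^{-\frac{d-s}{2}-\epsilon}(M)$; this requires and uses the smoothness of $\sqrt{\mu}$ through $W_0^{m,4}$. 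Your route instead estimates $\mathbb{E}\|\chi f\|_{H^t}^2$ directly via $\mathbb{E}|\widehat{\chi f}(\xi)|^2=\langle \chi e^{-i\xi\cdot}, C_f(\chi e^{-i\xi\cdot})\rangle \lesssim (1+|\xi|)^{-s}$ and Tonelli, concluding a.s. finiteness from finite expectation. What your approach buys is self-containedness (no factorization, no multiplication lemma, no $L^p$-Sobolev embeddings) and it treats all $s\le d$, including negative $s$, on the same footing; the mapping property of $C_f$ as a pseudodifferential operator of order $-s$ applied to the modulated cutoff gives the uniform spectral bound you flag as the main bookkeeping. What the paper's approach buys is brevity given the cited results and an argument phrased at the level of function spaces rather than kernels. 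The technical debts you acknowledge (lower-order symbols and the $x$-dependence of $c(x,\xi)$ in the increment estimate, uniformity in $\xi$ of the spectral bound, and passing from a single $\epsilon$ to all small $\epsilon$ via monotonicity of the spaces, plus the fact that part (1) concerns a continuous modification) are all standard and do not hide a gap.
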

\begin{proof}
	The proof of case \textbf{(1)} is already explained in Lemma 2.6 of the reference \cite{li2021inverse}, so we will not repeat it here. Here, we analyze case \textbf{(2)}, i.e., when $s\leq d$, the random source $f$ is relatively coarse and should be interpreted as a distribution. In the literature \cite{li2021inverse}, it is pointed out that if the strength function $\hat{\mu} \in C^{0,\alpha}(M)$ of the micro-locally isotropic Gaussian random field $\hat{f}$, then $\hat{f} \in W^{-\frac{d-s}{2}-\frac{\epsilon}{2},\hat{p}}(M)$, for sufficiently small $\epsilon > 0$ and $\hat{p} > 1$.
	
	For a random field $f$ satisfying Assumption $\ref{Ass:1}$, the strength function $\mu$ will satisfy
	\[ \sqrt{\mu} \in W_{0}^{m,4}(M) \subset W_{0}^{\frac{d-s}{2}+\frac{\epsilon}{2},q'}(M). \]
	for any $m > \frac{d}{4} + 2n - 1$ and $q' \in (2, \infty)$. Note that $f$ and $\sqrt{\mu}\hat{f}$ have the same regularity, and according to Lemma 2 in \cite{lassas2008inverse}, we have
	\[
	\left\| \sqrt{\mu} \hat{f} \right\|_{W^{- \frac{d - s}{2} - \frac{\epsilon}{2},p'}(M)} \lesssim \left\| \sqrt{\mu} \right\|_{W^{ \frac{d - s}{2} + \frac{\epsilon}{2}, q'}(M)} \left\| \hat{f} \right\|_{W^{- \frac{d - s}{2} - \frac{\epsilon}{2}, \hat{p}}(M)}
	.\]
	where $q'=\frac{2\hat{p}}{\hat{p}-1}\in (2,\infty)$ and $p'=\frac{2\hat{p}}{\hat{p}+1}\in (1,2)$, such that $\frac{1}{p'}+\frac{1}{q'}=1$.
	
	Take any $p' \in (\frac{1}{\frac{1}{2} + \frac{\epsilon}{2d}}, 2)$, and from Theorem 7.63 in the literature \cite{adams2003sobolev}, we obtain
	\[
	H_0^{\frac{d - s}{2} + \epsilon}(M) \subset W_0^{\frac{d - s}{2} + \frac{\epsilon}{2}, q'}(M).
	\]
	and
	\[
	f \in W^{-\frac{d - s}{2} - \frac{\epsilon}{2}, p'}(M) \subset H^{-\frac{d - s}{2} - \epsilon}(M).
	\]
\end{proof}

\subsection{Direct problem}
In this subsection, we will discuss the well-posedness and stability of solutions to the one-dimensional Helmholtz equation without attenuation.
\subsubsection{Fundamental solution}
When studying the one-dimensional Helmholtz equation without attenuation, analyzing the local regularity of the fundamental solution in Sobolev space is important for understanding the smoothness, existence, and properties of the solution. These analyses are particularly important for ensuring the validity of the solution and the effectiveness of the analytical methods. Specifically, the local regularity analysis in Sobolev space is directly related to the stability of numerical solutions and theoretical analyses.

According to \cite{keller1976numerical}, it is easy to see that there is a unique solution for equation (\ref{helmholtz non attenuation}) and equation (\ref{boundary condition}). Let \[ G_{\nu}(x,y)=\frac{i}{2\nu}e^{i\nu |x-y|}. \]
this solution constitutes the fundamental solution of the one-dimensional Helmholtz equation under outward radiation boundary conditions.

The following lemma will illustrate the local regularity satisfied by the fundamental solution $G_{\nu}(x,\cdot)$.

\begin{lemma}
	For any fixed $x\in \mathbb{R}$ and $1<p<\infty$, the fundamental solution $G_{\nu}(x,\cdot)\in W_{loc}^{1,p}(\mathbb{R})$.
	\label{local uniform}
\end{lemma}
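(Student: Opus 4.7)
The plan is to show that $G_\nu(x,\cdot)$ is continuous on $\mathbb{R}$, piecewise smooth away from the single point $y=x$, and that its almost-everywhere classical derivative is an essentially bounded function. From this the membership in $W^{1,p}_{loc}(\mathbb{R})$ will follow for every $1<p<\infty$.

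First I would bound the function itself: since $|G_\nu(x,y)| = \frac{1}{2\nu}$ uniformly in $y$, we have $G_\nu(x,\cdot) \in L^\infty(\mathbb{R}) \subset L^p_{loc}(\mathbb{R})$ for every $p$. Next, on each of the open half-lines $\{y<x\}$ and $\{y>x\}$ the map $y \mapsto |x-y|$ is smooth, so I would compute the classical derivative there using $\tfrac{d}{dy}|x-y| = \mathrm{sgn}(y-x)$, obtaining
\[
\partial_y G_\nu(x,y) \;=\; -\tfrac{1}{2}\,\mathrm{sgn}(y-x)\,e^{i\nu|x-y|}, \qquad y \neq x.
\]
This function is defined a.e.\ on $\mathbb{R}$, has modulus $\tfrac{1}{2}$, and hence also lies in $L^\infty(\mathbb{R}) \subset L^p_{loc}(\mathbb{R})$ for every $p$.

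The crux is to verify that this pointwise expression actually serves as the \emph{weak} derivative of $G_\nu(x,\cdot)$ on all of $\mathbb{R}$. I would do this by the usual integration-by-parts argument: for any test function $\varphi \in C_c^\infty(\mathbb{R})$, split the integral $\int_{\mathbb{R}} G_\nu(x,y)\varphi'(y)\,dy$ at $y=x$, integrate by parts on each half-line, and collect boundary contributions. Because $G_\nu(x,\cdot)$ is \emph{continuous} across $y=x$ (the two one-sided limits both equal $\tfrac{i}{2\nu}$), the boundary terms at $y=x$ coming from the two pieces cancel exactly, leaving
\[
\int_{\mathbb{R}} G_\nu(x,y)\,\varphi'(y)\,dy \;=\; -\int_{\mathbb{R}} \bigl(-\tfrac{1}{2}\mathrm{sgn}(y-x)e^{i\nu|x-y|}\bigr)\varphi(y)\,dy,
\]
which identifies the weak derivative with the a.e.\ classical one. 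Combined with the two $L^p_{loc}$ bounds above, this yields $G_\nu(x,\cdot) \in W^{1,p}_{loc}(\mathbb{R})$ for all $1<p<\infty$.

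The only delicate point is the cancellation of the boundary terms at $y=x$; this is where one uses that $|x-y|$ is Lipschitz (so $G_\nu(x,\cdot)$ is absolutely continuous, not merely piecewise smooth with a jump). If one instead tried to differentiate $e^{i\nu|x-y|}$ directly without checking this continuity, one could mistakenly generate a Dirac mass at $y=x$; verifying its absence is the main (mild) obstacle, and once it is settled the rest is immediate from the uniform boundedness of both $G_\nu$ and its derivative.
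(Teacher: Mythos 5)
Your proof is correct, and it takes a genuinely different route from the paper. You argue directly in physical space: $G_\nu(x,\cdot)=\frac{i}{2\nu}e^{i\nu|x-y|}$ is bounded, Lipschitz (being the composition of the Lipschitz map $y\mapsto|x-y|$ with a smooth bounded function), hence absolutely continuous, and its a.e.\ derivative $-\tfrac12\,\mathrm{sgn}(y-x)e^{i\nu|x-y|}$ has modulus $\tfrac12$; the integration-by-parts check that no Dirac mass appears at $y=x$ is exactly the right point to verify, and it settles the weak-derivative identification. This in fact gives the stronger conclusion $G_\nu(x,\cdot)\in W^{1,\infty}_{loc}(\mathbb{R})$, uniformly in $x$ and with constants independent of the local interval. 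The paper instead works on the Fourier side: it computes $\mathcal{F}[G_\nu(x,\cdot)](\xi)=e^{-i\xi x}/(\xi^2-\nu^2)$ via a principal-value regularization of a divergent oscillatory integral, then estimates $\|(1+|\xi|)\mathcal{F}[G_\nu]\|_{L^{p'}}$ by splitting into $|\xi|\ge 2|\nu|$ and $|\xi|\le 2|\nu|$ and invokes Hausdorff--Young. That route is heavier and has delicate points your argument simply avoids: the distributional meaning of the Fourier transform (the symmetric-truncation limit discards non-decaying oscillatory boundary terms), the restriction on exponents for which the Hausdorff--Young direction used is actually valid, and the fact that the denominator $\xi^2-\nu^2$ does vanish at $\xi=\pm\nu$ inside the compact region, so the low-frequency bound needs the principal-value structure rather than mere pointwise domination. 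Your elementary pointwise argument is shorter, gives a stronger ($L^\infty$-gradient) conclusion, and is the cleaner proof of the stated lemma; the paper's Fourier representation is, however, the form reused later in its analysis, which is presumably why it is derived there.
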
	
\begin{proof}
	Perform a Fourier transform on the variable $y$ (fixing $x$) for the fundamental solution $G_{\nu}(x,y)=\frac{i}{2\nu}e^{i\nu |x-y|}$
	\[
	\mathcal{F}[G_{\nu}(x, \cdot)](\xi) = \frac{i}{2\nu} \int_{-\infty}^{\infty} e^{i\nu|x - y|} e^{-i \xi y} \, dy.
	\]
	Let $z = y - x$, the right-hand side of the above equation becomes
	$\frac{i}{2\nu} e^{-i \xi x} \int_{-\infty}^{\infty} e^{i\nu|z|} e^{-i \xi z} \, dz$. Define the integral
	\[ I =\int_{-\infty}^{\infty} e^{i\nu|z|} e^{-i \xi z} \, dz = \int_{-\infty}^{0} e^{-i(\nu +\xi )z} \, dz + \int_{0}^{\infty} e^{i(\nu -\xi )z} \, dz .\]
	Calculating each integral separately, we have
	\[ \int_{0}^{\infty} e^{i(\nu -\xi )z} \, dz =\lim_{L \to \infty} \frac{e^{i(\nu - \xi)L} - 1}{i(\nu - \xi)}.
	\]
	When $L\rightarrow \infty$, $e^{i(\nu - \xi)L}$ oscillates on the unit circle and has no limit. Similarly, the integral on the negative half-axis also diverges. Therefore, we use the principal value integral with symmetric truncation
	\begin{align*}
		\text{PV} I &= \lim_{L \to \infty} \left( \int_{-L}^{0} e^{-i(\nu + \xi)z} \, dz + \int_{0}^{L} e^{i(\nu - \xi)z} \, dz \right)\\
		&=\lim_{L \to \infty} \left[ \frac{e^{i(\nu + \xi)L} - 1}{i(\nu + \xi)} + \frac{e^{i(\nu - \xi)L} - 1}{i(\nu - \xi)} \right]\\
		&= \frac{-1}{i} \left( \frac{1}{\nu + \xi} + \frac{1}{\nu - \xi} \right) = \frac{2i\nu }{\nu^2 - \xi^2}.
	\end{align*}
	i.e., $I = \frac{2i\nu }{\nu^2 - \xi^2}$, which ultimately yields
	\[ \mathcal{F}[G_{\nu}(x, \cdot)](\xi) = \frac{e^{-i\xi x}}{\xi^2 - \nu^2}. \]
	The $W_{loc}^{1,p}(\mathbb{R})$ norm can be expressed as
	\[
	\left\| G_{\nu} \right\|_{W^{1,p}(\mathcal{B}_R)} \leq C \left( \left\| G_{\nu} \right\|_{L^p(\mathcal{B}_R)} + \left\| \partial_y G_{\nu} \right\|_{L^p(\mathcal{B}_R)} \right).
	\]
	By the Hausdorff-Young inequality,
	$\left\| G_{\nu} \right\|_{L^p} \leq \left\| \mathcal{F}[G_{\nu}] \right\|_{L^{p'}}$ $\left( \frac{1}{p} + \frac{1}{p'} = 1 \right)$,
	the derivative term has $\left\| \partial_y G_{\nu} \right\|_{L^p} \leq \left\| i \xi \mathcal{F}[G_{\nu}] \right\|_{L^{p'}}$, so
	\[
	\left\| G_{\nu} \right\|_{W^{1,p}} \lesssim \left\| (1 + |\xi|) \mathcal{F}[G_{\nu}] \right\|_{L^{p'}}.
	\]
	Furthermore,
	\[
	|(1 + |\xi|) \mathcal{F}[G_{\nu}]| \leq \frac{1 + |\xi|}{|\xi^2 - \nu^2|} \leq \frac{1 + |\xi|}{|\xi|^2 - |\nu|^2}.
	\]
	When $|\xi|\geq 2|\nu|$, we have $\frac{1 + |\xi|}{|\xi|^2 - |\nu|^2} \leq \frac{2|\xi|}{|\xi^2/2|} = \frac{4}{|\xi|}$, $\int_{|\xi| \geq 2|\nu|} \left( \frac{4}{|\xi|} \right)^{p'} d\xi \leq 8 \int_{|\xi| \geq 2|\nu|} \xi^{-p'} d\xi < \infty \quad \text{if} \quad p' > 1.$
	When $|\xi|\leq 2|\nu|$, we have $\frac{1 + |\xi|}{|\xi^2 - \nu^2|}\leq \frac{1 + 2|\nu|}{\text{dist}(\xi^2, \nu^2)}$. Since the denominator does not disappear on the compact set, $\int_{|\xi| \leq 2|\nu|} \left( \frac{1 + 2|\nu|}{\text{dist}(\xi^2, \nu^2)} \right)^{p'} d\xi < \infty$. $L^{p'}$'s integrability can be proven. Therefore, $\left\| G_{\nu} \right\|_{W^{1,p}(\mathcal{B}_R)} \lesssim \left\| (1 + |\xi|) \mathcal{F}[G_{\nu}] \right\|_{L^{p'}}<\infty.$
\end{proof}	

Using the fundamental solution $G_{\nu}$, we now discuss its well-posedness. Firstly, we define the volume potential
\[ (V_{\nu} f)(x):=-\int_{\mathbb{R}} G_{\nu}(x,y)f(y) \,dy. \]
It describes a potential field caused by a source term (such as mass or charge distribution) with the following properties.
\begin{lemma}
	Let $I$ and $O$ be two bounded intervals on $\mathbb{R}$, and let the operator $V_{\nu}:H^{-\beta}(I)\rightarrow H^{\beta}(O)$ be bounded, where $\beta\in (0,1]$.
	\label{operator H}
\end{lemma}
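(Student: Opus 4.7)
The plan is to exploit that $V_\nu$ is essentially the resolvent $(-\partial_x^2-\nu^2)^{-1}$ with outgoing radiation condition, so it gains two derivatives on the Sobolev scale; combined with the embedding $H^{2-\beta}(O)\hookrightarrow H^\beta(O)$, valid for $\beta\le 1$, this will be enough. The strategy is threefold: prove the two endpoint estimates $V_\nu:L^2(I)\to H^2(O)$ and $V_\nu:H^{-2}(I)\to L^2(O)$ directly, interpolate to reach $V_\nu:H^{-\beta}(I)\to H^{2-\beta}(O)$ for $\beta\in[0,2]$, and then apply the embedding.

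To begin, I would record that $V_\nu f$ is well defined for $f\in H^{-\beta}(I)$: by Lemma~\ref{local uniform} with $p=2$ the kernel $G_\nu(x,\cdot)$ lies in $H^1_{\mathrm{loc}}(\mathbb{R})$, hence in $H^\beta(I)$ for every $\beta\le 1$, so the integral is read as the duality pairing $-\langle G_\nu(x,\cdot),f\rangle_{H^\beta(I),H^{-\beta}(I)}$. For the first endpoint I would use Cauchy--Schwarz and the uniform bound $|G_\nu(x,y)|\le 1/(2|\nu|)$ to control $V_\nu f$ pointwise on $O$, differentiate once to obtain $(V_\nu f)'(x)=\tfrac12\int_I\mathrm{sgn}(x-y)e^{i\nu|x-y|}f(y)\,dy$, which is again pointwise bounded, and then compute $(V_\nu f)''$ distributionally: the Dirac mass appearing when one differentiates $\mathrm{sgn}(x-y)$ yields the interior source, producing the identity $(V_\nu f)''+\nu^2 V_\nu f=f\mathbf{1}_I$, from which the $H^2(O)$ bound is immediate.

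The dual endpoint $V_\nu:H^{-2}(I)\to L^2(O)$ would then follow by transposition: the symmetry $G_\nu(x,y)=G_\nu(y,x)$ together with the complex-conjugate relation $\overline{G_\nu}=G_{-\nu}$ makes the adjoint of $V_\nu$ an operator of the same form as $V_{-\nu}$, and the endpoint estimate proved for the family $\{V_{\pm\nu}\}$ transfers across duality between $L^2(O)\to H^2(I)$ and $H^{-2}(I)\to L^2(O)$. Complex interpolation between the two endpoints delivers $V_\nu:H^{-\beta}(I)\to H^{2-\beta}(O)$ for every $\beta\in[0,2]$; since $O$ is bounded and $2-\beta\ge\beta$ whenever $\beta\le 1$, the inclusion $H^{2-\beta}(O)\hookrightarrow H^\beta(O)$ completes the proof.

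The hard part is bookkeeping rather than analysis. The distributional derivative in the endpoint step must be tracked carefully because $G_\nu(\cdot,y)$ is only Lipschitz across the diagonal $y=x$, so the delta produced by differentiating $\mathrm{sgn}(x-y)$ must be accounted for exactly. More delicately, the transposition and complex interpolation require identifying $H^{-\beta}(I)$ with the dual of an appropriate $H^\beta$-space; at the exceptional exponent $\beta=1/2$ this forces a distinction between $H^{1/2}(I)$, $H^{1/2}_0(I)$ and $\widetilde H^{1/2}(I)$, but away from this value all three conventions agree and the argument goes through unchanged.
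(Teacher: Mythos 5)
Your argument is essentially correct, but note that the paper does not prove this lemma at all: it is quoted from Lemma 2.3 of \cite{li2020inverse} and the proof is explicitly omitted. Your proposal is therefore a genuinely self-contained alternative. The computation behind it is sound: with $G_{\nu}(x,y)=\frac{i}{2\nu}e^{i\nu|x-y|}$ one indeed has $\partial_x G_{\nu}=-\tfrac12\,\mathrm{sgn}(x-y)e^{i\nu|x-y|}$ and $\partial_x^2G_{\nu}+\nu^2G_{\nu}=-\delta(x-y)$, so $(V_{\nu}f)''+\nu^2V_{\nu}f=f\mathbf{1}_I$ and the endpoint bound $V_{\nu}:L^2(I)\to H^2(O)$ follows from the uniform bound $|G_{\nu}|=1/(2|\nu|)$; the adjoint identification $\overline{G_{\nu}}=G_{-\nu}$, transposition, complex interpolation to $H^{-\beta}(I)\to H^{2-\beta}(O)$, and the embedding $H^{2-\beta}(O)\hookrightarrow H^{\beta}(O)$ for $\beta\le1$ then give the claim, in fact with a stronger (two-derivative) gain than the lemma states, whereas the cited proof of Li--Wang works directly at the orders $\beta\in(0,1]$ via kernel estimates. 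The one inaccuracy is your closing remark that the $H^{\beta}$ versus $H^{\beta}_0$ versus $\widetilde H^{\beta}$ distinction only bites at $\beta=1/2$: on a bounded interval $H^{\beta}_0(I)\ne H^{\beta}(I)$ for every $\beta>1/2$, and, more to the point, your duality endpoint pairs $f$ against elements of $H^2(I)$, so what you actually control is the norm of $f$ in $\bigl(H^2(I)\bigr)'=\widetilde H^{-2}(I)$ rather than in $H^{-2}(I)=\bigl(H^2_0(I)\bigr)'$; equivalently, the interpolation should be run on the $\widetilde H^{-s}(I)$ scale (or, more simply, on distributions supported compactly in $I$, which is the situation in which the paper applies the lemma, since the source is supported in $M\subset\subset I$). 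With that convention fixed once at the outset--or by writing $f=g_0+g_1'+g_2''$ with $g_i\in L^2$ and hitting the derivatives against the kernel, which avoids duality altogether--your proof goes through; as written, the bookkeeping claim is the only step that would not survive scrutiny, and it is repairable rather than fatal.
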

The proof of the above lemma is given in Lemma 2.3 of \cite{li2020inverse}, and it is omitted here. It provides information about the boundedness of the operator $V_{\nu}$ between Sobolev spaces. In short, the boundedness of the solution operator propagates the Sobolev regularity from $f$ to $u$.

The following theorem proves the well-posedness of the direct problem $(\ref{helmholtz non attenuation})-(\ref{boundary condition})$ in the sense of distributions.
\begin{theorem}
	Suppose that $f$ satisfies Assumption $\ref{Ass:1}$ and $s\in(-\frac{2}{n},1]$, then the stochastic Helmholtz equation $(\ref{helmholtz non attenuation})-(\ref{boundary condition})$ has a unique solution in the sense of distributions which is given as follows:
	\[ u(x;\nu) = -\int_{M} G_{\nu}(x, y)f(y) \,dy.  \] 
	It almost surely holds that $u\in W_{loc}^{\gamma,n}(\mathbb{R})$ for any $n>1$, and \[ \frac{1-s}{2}<\gamma<\frac{1}{2}+\frac{1}{n}. \]
	\label{helm direct}
\end{theorem}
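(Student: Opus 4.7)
The plan is to separate the argument into uniqueness, existence via the representation formula, and regularity, handled in that order.

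For uniqueness, I would test the homogeneous equation $\Delta u + \nu^2 u = 0$ against $\bar u$ and integrate by parts on $[0,1]$. The outward radiation conditions $u'(0) + i\nu u(0) = 0$ and $u'(1) - i\nu u(1) = 0$ turn the boundary terms into a purely imaginary contribution proportional to $|u(0)|^2 + |u(1)|^2$, whose vanishing forces $u(0) = u(1) = 0$ and, in conjunction with the real part of the identity, $u \equiv 0$. For existence, I would verify that $u(x;\nu) := -\langle G_\nu(x,\cdot), f\rangle$ is well-defined as a duality pairing: Lemma \ref{local uniform} places $G_\nu(x,\cdot) \in W^{1,p}_{\mathrm{loc}}(\mathbb{R})$ for every $p > 1$, while Lemma \ref{f feature}(2) shows $f \in H^{-(1-s)/2-\epsilon}(M)$ almost surely, so the pairing is meaningful in the appropriate $(H^\alpha, H^{-\alpha})$ duality. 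That this $u$ solves (\ref{helmholtz non attenuation})--(\ref{boundary condition}) distributionally then follows from the kernel identity $(\Delta_y + \nu^2) G_\nu(x,y) = -\delta(x - y)$ and the fact that the form $e^{i\nu|x-y|}$ encodes the outward radiation conditions at $x = 0$ and $x = 1$.

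For the regularity statement, I would proceed in two stages. First, Lemma \ref{f feature}(2) with $d = 1$ gives $f \in H^{-(1-s)/2 - \epsilon}(M)$ almost surely. The assumption $s \in (-2/n, 1]$ ensures, for sufficiently small $\epsilon > 0$, that $\beta := (1-s)/2 + \epsilon$ lies in $(0,1]$, so Lemma \ref{operator H} applies and yields $u = -V_\nu f \in H^{(1-s)/2 + \epsilon}(O)$ on every bounded interval $O$, which already delivers the lower bound $\gamma > (1-s)/2$ in the Hilbertian scale. Second, I would pass from the $L^2$-based scale to the $L^n$-based scale via the one-dimensional Sobolev embedding $H^{\beta}_{\mathrm{loc}} \hookrightarrow W^{\gamma, n}_{\mathrm{loc}}$, combined with the extra regularity that the kernel $G_\nu$ itself carries in the $x$-variable from the Fourier-side bound $|(1 + |\xi|)\mathcal{F}[G_\nu]| \lesssim |\xi|^{-1}$ at high frequency established in the proof of Lemma \ref{local uniform}.

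The main obstacle will be the second stage: matching the precise upper bound $\gamma < 1/2 + 1/n$ on the nose. A naive application of Lemma \ref{operator H} alone, followed by the generic embedding $H^{\beta} \hookrightarrow W^{\beta - 1/2 + 1/n, n}$, falls short of the claimed exponent, so one must open the estimate and exploit the explicit kernel structure directly: differentiating $u$ in $x$ produces terms in which the kink-type singularity of $G_\nu$ along $y = x$ contributes an additional $1/n$ derivative in the $L^n$-sense, exactly as the principal value computation in Lemma \ref{local uniform} suggests. Interpolating this kernel-level gain against the duality pairing with $f$, and verifying that the resulting window $(1-s)/2 < \gamma < 1/2 + 1/n$ is nonempty precisely when $s > -2/n$, will be the delicate part of the proof.
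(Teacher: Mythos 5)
Your treatment of uniqueness and of existence via the distributional identity $(\Delta_x+\nu^2)G_\nu(x,y)=-\delta(x-y)$ matches the paper (which in fact just cites the deterministic case for uniqueness), but the regularity stage contains a genuine gap, and you flag it yourself: after taking $\beta=(1-s)/2+\epsilon$ in Lemma \ref{operator H} you only reach $u\in H^{(1-s)/2+\epsilon}_{loc}$, the generic embedding then tops out at $\gamma\le -s/2+1/n+\epsilon$, and the promised rescue --- ``opening the estimate and exploiting the explicit kernel structure'' to recover an extra $1/n$ of smoothness from the kink of $G_\nu$ --- is never carried out and is not how the exponent $1/2+1/n$ actually arises. (A secondary slip: $\beta=(1-s)/2+\epsilon\in(0,1]$ is not guaranteed by $s\in(-2/n,1]$ when $1<n<2$, since then $s$ may lie below $-1$.)

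The missing idea is simply not to tie $\beta$ to the regularity of $f$. The paper invokes Lemma \ref{operator H} at full strength $\beta=1$: after localizing with a cutoff $\eta$, the weighted volume potential is bounded from $H^{-1}(M)$ to $H^{1}(U)$, i.e.\ it gains two derivatives, and this is sandwiched between the embeddings $W^{-\gamma,m}(M)\hookrightarrow H^{-1}(M)$ and $H^{1}(U)\hookrightarrow W^{\gamma,n}(U)$ with conjugate exponents $\tfrac1m+\tfrac1n=1$. The upper bound is then exactly the one-dimensional Sobolev condition for $H^{1}\hookrightarrow W^{\gamma,n}$, namely $\gamma-\tfrac1n<1-\tfrac12$, i.e.\ $\gamma<\tfrac12+\tfrac1n$; the lower bound $\gamma>\tfrac{1-s}{2}$ is what is needed to place $f\in W^{\frac{s-1}{2}-\epsilon,p}(M)$ (Lemma \ref{f feature}) into the domain space $W^{-\gamma,m}(M)$; and the nonemptiness of the window is precisely the hypothesis $s>-\tfrac2n$. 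So no kernel-level refinement beyond Lemma \ref{local uniform} is required --- your first stage should be replaced by this $\beta=1$ argument rather than patched by an interpolation against the kernel singularity.
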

\begin{proof}
	We only need to prove the existence of the solution, because uniqueness can be directly obtained from the deterministic case. Define $I = (-i, i)$. By Lemma \ref{local uniform}, we have $G_{\nu}(x,\cdot) \in W^{1,m}(I) \hookrightarrow W^{\gamma,n}(I)$ for some $m > 1$ such that $\frac{1}{m} - \frac{1}{n} < 1 - \gamma$. For $f$ in Assumption \ref{Ass:1}, by Lemma \ref{f feature}, for any $\epsilon>0$ and $p>1$, $f\in W^{\frac{s-d}{2}-\epsilon,p}(M)$. For any $x\in \mathbb{R}$, define the volume potential
	\[ u_{*}(x;\nu) = -\int_{M} G_{\nu}(x,y)f(y) \,dy = -\int_{\mathbb{R}}G_{\nu}(x,y)f(y) \,dy. \]
	Firstly, we need to prove that $u_{*}$ is a solution in the sense of distributions. In fact, for any $k\in W_{loc}^{\gamma,n}(\mathbb{R})$,
	\begin{align*}
		\langle \Delta u_* + \nu^2 u_*, k \rangle &= -\langle \nabla u_*, \nabla k \rangle + \nu^2 \langle u_*, k \rangle \\
		&= \int_{\mathbb{R}} \nabla_x \left[ \int_{M} G_{\nu}(x, y) f(y) \, dy \right] \nabla k(x) \, dx - \nu^2 \int_{\mathbb{R}} \left[ \int_{M} G_{\nu}(x, y) f(y) \, dy \right] k(x) \, dx\\
		&= - \int_{M} \int_{\mathbb{R}} \Delta_x G_{\nu}(x, y) k(x) f(y) \, dx \, dy - \nu^2 \int_{\mathbb{R}} \int_{M} G_{\nu}(x, y) f(y) \, dy \, k(x) \, dx\\
		&= \int_{M} \int_{\mathbb{R}} \left( \nu^2 G_{\nu}(x, y) + \delta(x - y) \right) k(x) f(y) \, dx \, dy\\
		&= \langle f, k \rangle.
	\end{align*}
	To prove that $u\in W_{loc}^{\gamma,n}(\mathbb{R})$, it suffices to prove that $\eta u_* \in W_{loc}^{\gamma,n}(\mathbb{R})$, where $\eta\in C_{0}^{\infty}$ is a bounded function with compact support in $U\subset \mathbb{R}$. Define a weighted potential
	\[ (\hat{V}_{\nu} f)(x):=-\eta(x) \int_{\mathbb{R}} G_{\nu}(x,y)f(y) \,dy,  \quad x\in U.\]
	By Lemma \ref{operator H}, the operator $\hat{V}_{\nu}:H^{-\beta}(M)\rightarrow H^{\beta}(U)$ is bounded for any $\beta\in (0,1]$. For the parameters $\gamma$ and $n$ assumed in the theorem, choose $\beta=1$ and $\frac{1}{m}+\frac{1}{n}=1$. According to the Kondrachov embedding theorem, we have
	\[ W^{-\gamma,m}(M) \hookrightarrow H^{\beta}(M),\quad H^{-\beta}(U)\hookrightarrow W^{\gamma,n}(U).\]\\
	is continuous, ultimately yielding that $\hat{V}_{\nu}:W^{-\gamma,m}(M)\rightarrow W^{\gamma,n}(U)$ is bounded, which implies that $\eta u_* = \hat{V}_{\nu}\in W^{\gamma,n}. $
\end{proof}

\subsection{Finite-jump Lévy processes}
The Lévy process \cite{applebaum2009levy} is a class of stochastic processes with independent and stationary increments, characterized by paths that can be described by continuous and jump components. A finite-jump Lévy process is a subclass of Lévy processes, where the jump component occurs only a finite number of times in any finite time interval. Namely, there exists a Lévy measure $\tau$ such that $\tau(\mathbb{R})<\infty$. According to the Lévy-Itô decomposition, the decomposition of the finite jump Lévy process $L_t$ is as follows
\[ L_t = bt + \sigma W_t + \sum_{0<s\leq t} J_s.\]
where $b$ is the drift coefficient, $\sigma W_t$ is the diffusion term of standard Brownian motion, $\sigma$ is the volatility, and $\sum_{0<s\leq t} J_s$ represents the finite jump process, where $J_s$ is the jump amplitude and the number of jump times $s$ is finite within a finite time interval.\\
Characteristics of the finite-jump Lévy process
\begin{itemize}[left=5pt]
	\item The jump amplitude $J_s$ usually follows a given distribution (such as normal or exponential distribution);
	\item The interval between jump times follows a Poisson distribution, and the number of jumps is finite.
\end{itemize}
These properties ensure that the path of $L_t$ has the property of being left-extreme and right-connected.

The form of the stochastic wave equation driven by a finite-jump Lévy process is as follows
\begin{equation}
	u_{tt} - \Delta u = f(x)h(t) + g(x)\dot{L}_t, \quad (x,t) \in D \times [0,T].
	\label{levy}
\end{equation}
where the boundary conditions $u(x,t)=0$, $(x,t) \in \partial D \times [0,T]$, the initial conditions $u(x,0)=u_t(x,0)=0$, $x \in \bar{D}$. $D \subset \mathbb{R}^d$ is a bounded domain with a Lipschitz boundary $\partial D$.

\begin{definition}
	A stochastic process $u(x,t)$ taking values in $L^2(D)$ is called a mild solution of equation $(\ref{levy})$ and can be expressed as
	\begin{equation}
		u(x,t) = \int_0^t K(x,t-\tau)f(x)h(\tau) \, d\tau + \int_0^t K(x,t-\tau)g(x) \, dL_\tau. 
		\label{mild}
	\end{equation}
\end{definition}
The kernel function $K(x,t-\tau) = \sin\left( (t-\tau)\sqrt{-\Delta} \right) (-\Delta)^{-1/2}$ and $dL_\tau$ is the increment of the $L\acute{e}vy$ jump process.
\[ dL_\tau = b d\tau + \sigma dW_\tau + \sum_{s \in J_\tau} J_s \delta_s. \]
where $\delta_s$ is the Dirac measure at the jump point. Therefore, the mild solution can be decomposed into the following parts
\[ u(x,t) = u_{\text{det}}(x,t) + u_{\text{diff}}(x,t) + u_{\text{jump}}(x,t). \]
1. Determined part
\begin{equation}
	u_{\text{det}}(x,t) = \int_0^t K(x,t-\tau)f(x)h(\tau) \, d\tau + \int_0^t K(x,t-\tau)g(x)b \, d\tau.
\end{equation}
2. Diffusion part(Brownian motion driven)
\begin{equation}
	u_{\text{diff}}(x,t) = \int_0^t K(x,t-\tau)g(x)\sigma \, dW_\tau.
\end{equation}
3. Jumping part
\begin{equation}
	u_{\text{jump}}(x,t) = \int_0^t K(x,t-\tau) g(x) \left( \sum_{s \in J_{\tau}} J_s \delta_s \right).
	\label{jump}
\end{equation}

\begin{remark}
	The operator $-\Delta$ with homogeneous Dirichlet boundary conditions has eigenvalues and eigenvectors $\{\lambda_{k},\varphi_{k}\}_{k=1}^{\infty}$, where the eigenvalues satisfy: as $k \rightarrow \infty$, $\lambda_{k}\rightarrow \infty$, $0 < \lambda_1 < \lambda_2 < \cdots < \lambda_k < \cdots$, and the eigenfunctions $\{\varphi_k\}_{k=1}^{\infty}$ form a complete orthogonal basis in the $L^2(D)$ space. For any function $s(x)$ in the $L^2(D)$ space, it can be written as
	\[ s(x)=\sum_{k=1}^{\infty}s_k\varphi_{k}(x), \quad s_k=(s,\varphi_k)_{L^2(D)}=\int_{D}s(x)\varphi_{k}(x)\, dx. \]
	Therefore, if $u \in L^2(D)$ is a mild solution to equation $(\ref{levy})$, then
	\[ u(\cdot, t) = \sum_{k=1}^{\infty} u_k(t) \varphi_k. \]
	where \[ u_k(t) = \left( u(\cdot, t), \varphi_k \right)_{L^2(D)} 
	= f_k \int_0^t \frac{\sin\left((t - \tau)\sqrt{\lambda_k}\right)}{\sqrt{\lambda_k}} h(\tau) d\tau + g_k \int_0^t \frac{\sin\left((t - \tau)\sqrt{\lambda_k}\right)}{\sqrt{\lambda_k}} dL_\tau.
	\]
	$f_k = (f, \varphi_k)_{L^2(D)}, g_k = (g, \varphi_k)_{L^2(D)}$, and $u_k(t)$ satisfies the following stochastic differential equation
	\begin{equation*}
		\left\{
		\begin{aligned}
			&u_k''(t) + \lambda_k u_k(t) = f_k h(t) + g_k \dot{L}_t, \quad t \in (0, T), \\
			&u_k(0) = u_k'(0) = 0. 
		\end{aligned}
		\right.
	\end{equation*}
\end{remark}

\section{The inverse source problem for the one-dimensional stochastic Helmholtz equation without attenuation}
In this section, we discuss the inverse source problem for the one-dimensional stochastic Helmholtz equation without attenuation. The main focus is on implementing an efficient numerical method for point-by-point reconstruction of the strength function of the stochastic source term from a computational perspective.

According to Theorem \ref{helm direct}, we have \[ u(x) = \frac{1}{2i\nu} \int_{M} e^{i\nu |x-y|} f(y) \, dy. \]
The key to solving the inverse problem is typically to infer the micro-local strength $\mu(y)$ of the source by analyzing the variance or other statistical characteristics of the observed wavefield $u(x)$. This is typically achieved by constructing a mathematical model and combining it with measurement data for reconstruction.

\subsection{Inverse source problem}
For the source term \[ f(x) = \sqrt{\mu(x)} (-\Delta)^{-\frac{s}{4}} W_{x}^{'}. \]
We consider the case where $s = 0$, i.e., \[ f(x) = \sqrt{\mu(x)} W_{x}^{'}. \]
where $\mu(x)$ is a smooth function with compact support on $M=(0,1)$. By Lemma \ref{f feature}, there exists a sufficiently small $\epsilon>0$ such that $f \in H^{-\frac{d-s}{2}-\epsilon}(M)$. The covariance operator $C_f$ of $f$ will satisfy
\begin{align*}
	\langle \varphi,C_f \psi \rangle &= \mathbb{E} \left[ \langle f, \varphi \rangle \langle f, \psi \rangle \right]\\
	&= \mathbb{E}\left[\int \mu(y) W'_y \varphi(y) \, dy \cdot \int \mu(z) W'_z \psi(z) \, dz\right]\\
	&=\int_{0}^{1} \mu(y)\varphi(y) \psi(y) \, dy\\
	&= \int_{M} \int_{M} K_f(x, y) \varphi(x) \psi(y) \, dx \, dy. 
\end{align*} 
and \[ \delta(x-y)=\frac{1}{2\pi}\int_{M}e^{i(x-y)\xi}\mu(x) \, d\xi. \] Therefore, \[ K_f(x, y) = \mu(y) \delta(x-y) = \frac{1}{2\pi}\int_{M}e^{i(x-y)\xi}\mu(x) \, d\xi. \]
So we can see that $c(x, \xi) = \mu(x)$, and $f$ satisfies Assumption \ref{Ass:1}.
From this, we can see that the solution to equation (\ref{helmholtz non attenuation}) can be expressed as
\begin{equation}
	u(x) = \frac{1}{2i\nu}\int_{M}e^{i\nu|x-y|}\sqrt{\mu(y)} \, dW(y),  \quad  x\in \mathbb{R}.
	\label{reconstruction}
\end{equation}

The random component of the wave field $u(x)$ often complicates analytical and numerical solutions. In numerical simulations, the random component often introduces uncertainty and instability. By using the Itô formula, the random component of the wave field can be separated from its statistical properties. This allows the extraction of statistical quantities such as the mean and variance of the wave field, facilitating statistical analysis of the wave field.

Apply the Itô formula $\mathbb{E}|\int_{0}^{1}f(y) \, dW(y)|^2 = \int_{0}^{1}|f(y)|^2 \, dy$ to calculate the expected value of the second moment of $u(x)$
\begin{equation}
	\mathbb{E}|u(x)|^2 = \frac{1}{4|\nu|^2}\int_{M}\mu(y) \, dy. 
	\label{ill}
\end{equation}

By(\ref{ill}), we can see that in the absence of attenuation, the variance of the wavefield can only provide the average value of the random source strength, but cannot accurately reconstruct the strength point by point. To determine the strength point by point, more detailed statistical information or further regularization processing is required.

\subsubsection{Numerical experimental design}
Data discretization processing: $x$ is used to calculate the position point of the corrected data, and the discrete points $\{x_m\}_{m=0}^{M-1}$ take values in the interval $I=[-1.2,-0.2]\cup[1.2,2.2]$, and define
$$x_0 = -1.2, \quad \Delta x = 2/M, \quad x_{m+1} = x_m + \Delta x, \quad M = 200, \quad m = 0, \dots, M-1.$$
$y$ is the grid point used for integration, i.e., the domain of the strength distribution. Since the strength is reconstructed on $[0,1]$, we define
$$ y_0 = 0, \quad \Delta y = 1/N, \quad y_{n+1} = y_n + \Delta y, \quad N = 200, \quad n = 0, \dots, N-1.$$
According to equation $(\ref{reconstruction})$, $u(x) = \frac{1}{2i\nu}\int_{M}e^{i\nu|x-y|}\sqrt{\mu(y)} \, dW(y)$, we have\[ 2i\nu u(x) = \int_{M}e^{i\nu|x-y|}\sqrt{\mu(y)} \, dW(y). \]

The characteristic function $H(x,\nu,\mu)$ can be defined by taking the real part of a complex integral. In physics and engineering, we are often only concerned with the real part of a physical quantity represented by a complex number. For example, the solutions to the wave equation are typically expressed as complex numbers, but the actual physical quantities, such as displacement and pressure, are the real parts of the complex solutions.

Since $\mathbb{E}|u(x)|^2 = \frac{1}{4|\nu|^2}\int_{M}\mu(y) \, dy$, we have
\begin{equation}
	4|\nu|^2\cdot \mathbb{E}|u(x)|^2 = \int_{M}\mu(y) \, dy.
	\label{h0}
\end{equation} 
Separating the real and imaginary parts, we get
\[  \text{Re}(2viu(x)) = \int_{M} \cos(\nu \mid x - y \mid) \cdot \sqrt{\mu(y)} \, dW(y). \]
\[ \text{Im}(2viu(x)) = \int_{M} \sin(\nu \mid x - y \mid) \cdot \sqrt{\mu(y)} \, dW(y). \]

There are two methods for defining the feature function $H(x,\nu,\mu)$.\\
1. By taking the real part and squaring it, then calculating the expectation, we can obtain a numerically stable feature.
\begin{equation}
	H^1(x, \nu, \mu) = \mathbb{E} \left| \text{Re}(2viu(x)) \right|^2 = \int_{M} \cos^2(\nu \mid x - y \mid) \cdot \mu(y) \, dy.
	\label{h1}
\end{equation}
2. By squaring the real and imaginary parts separately and then subtracting them, the influence of the imaginary part on strength recovery can be completely eliminated (the imaginary part is not always an interference component; sometimes it also provides additional information).
\begin{align}
	H^2(x, \nu, \mu) &= \mathbb{E} \left| \text{Re}(2viu(x)) \right|^2 - \mathbb{E} \left| \text{Im}(2viu(x)) \right|^2 \nonumber\\
	&= \int_{M} \cos(2\nu \mid x - y \mid) \cdot \mu(y) \, dy.
	\label{h2}
\end{align}
Comparing the right-hand sides of equations $(\ref{h0})$ and $(\ref{h1})(\ref{h2})$, the coefficients $\cos^2(\nu \mid x - y \mid)$ and $\cos(2\nu \mid x - y \mid)$ are similar to a dynamic weighting coefficient assigned in the strength reconstruction process. In order to maximize the retention of the characteristics of the real part while eliminating the influence of the imaginary part on strength recovery, making the recovery process more stable, and taking into account the forms of the two coefficients, we use discretization and summation to approximate the integral
\[ H(x, \nu, \mu) \approx \Delta y \sum_{j=1}^{N} \cos^2(2\nu \mid x - y_j \mid) \mu(y_j).
\]

Next, Tikhonov regularization is applied to the objective function, defined as follows
\[ J(\mu) = \sum_{\nu \in \text{frequencies}} \quad \sum_{x \in x_\text{points}} \left( H_{\text{obs}}(x, \nu, \mu) - H(x, \nu, \mu) \right)^2 + \alpha \| \mu \|_2^2. \]
$H_{\text{obs}}(x, \nu, \mu)$ is the observed data at frequency $\nu$ and observation point $x$, and $| \mu \|_2^2 = \sum_{j=1}^{N}\mu(y_j)^2$.\\
The observed data $H_{\text{obs}}(x, \nu, \mu)$ in the above equation is generated using different frequency data $\nu$ and the known true strength distribution, while the initial strength distribution is used for the iterative calculation of $H(x, \nu, \mu)$ in the above equation.

\subsubsection{Multi-frequency data fusion and regularization processing}
In the inverse problem of the one-dimensional Helmholtz equation without attenuation, using the second-order moment expectation of the wavefield $u(x)$ can indeed help filter out random noise from the source term, especially the random fluctuations introduced by Brownian motion increments. However, even so, measurable data still contains non-negligible noise, primarily due to uncertainties in finite sample estimates, errors introduced by model simplification, and insufficient suppression of high-frequency noise by the second-order moment expectation.

Therefore, the uniqueness and stability of the reconstructed wavefield $u(x)$ depend on how noise and data incompleteness are handled. This requires the integration of advanced numerical methods, such as regularization techniques and multi-frequency data fusion, to ensure the uniqueness and stability of the reconstruction results.

\textbf{Regularization} is a technique introduced to address the ill-posedness of inverse problems (i.e., solutions that are non-unique, unstable, or discontinuous). By incorporating a regularization term (i.e., additional constraints), stable solutions can be obtained while minimizing errors.

\textbf{Multi-frequency data fusion} utilizes observational data at different frequencies to enhance the stability and uniqueness of solutions to inverse problems. Multi-frequency data effectively overcomes the non-uniqueness issues caused by the periodicity of waves.

Next, we systematically discuss the ill-posedness of the inverse problem, the necessity of multi-frequency data fusion, and the stability and convergence of regularization methods. We use a combination of continuous and discrete analysis, with the continuous model revealing the essential ill-posedness and the discrete model corresponding to actual calculations.

\begin{definition}[Direct Problem]\label{def:forward_problem}
	Let $\mu \in L^2([0,1])$ be an unknown function and $\nu > 0$ be a frequency. Define the integral operator $T_\nu : L^2([0,1]) \to L^2([0,1])$ as
	\[(T_\nu \mu)(x) = \int_0^1 K_\nu(x,y)\mu(y)dy, \quad K_\nu(x,y)=\cos^2(2\nu|x-y|).\]
\end{definition}

For the inverse problem, given a finite set of frequencies $\{\nu_k\}_{k=1}^N \subset \mathbb{R}^+$ and observed data $H^{\text{obs}}(\cdot, \nu_k) \in L^2([0,1])$, find $\mu \in L^2([0,1])$ such that
\[ T_{\nu_{k}} \mu = H^{\text{obs}}(\cdot, \nu_k), \quad k = 1, \dots, N. \]

In practice, observation points $\{x_i\}_{i=1}^M \subset I$ for discrete data $H^{\text{obs}}(x_i, \nu_j)$.The discretization grid is defined as \( y_j = j \Delta y \), where \( \Delta y = 1/N \). The unknown function \( \mu \) is represented by the vector \( \mu = (\mu_1, \ldots, \mu_N)^T \in \mathbb{R}^N \) with \( \mu_j = \mu(y_j) \). The matrix \( A^{(k)} \in \mathbb{R}^{M \times N} \) for frequency \( \nu_k \) is defined by

\[
A_{ij}^{(k)} = \Delta y \cos^2(2\nu_k |x_i - y_j|), \quad i = 1, \ldots, M, \, j = 1, \ldots, N.
\]

The global matrix \( A \in \mathbb{R}^{MK \times N} \) is constructed by column-wise concatenation of the submatrices
\[
A = \begin{bmatrix} A^{(1)} \\ \vdots \\ A^{(K)} \end{bmatrix}
\]
and the observed data vector \( H^{\text{obs}} \in \mathbb{R}^{MK} \) is the concatenation of all \( H^{\text{obs}}(x_i, \nu_k) \) values. The objective function is given as follows.

\[
J(\mu) = \|H^{\text{obs}} - A\mu\|_2^2 + \alpha \|\mu\|_2^2.
\]
where \( \|\mu\|_2^2 = \sum_{j=1}^N \mu_j^2 \) is the squared \( L^2 \)-norm of \( \mu \). The inverse problem consists of minimizing \( J(\mu) \).

Next we will characterize the ill-posedness of the inverse problem  at a single frequency.
\begin{lemma}[Compactness]\label{lem:compactness}
	For any $\nu > 0$, the operator $T_\nu: L^2([0,1]) \to L^2([0,1])$ is compact.
\end{lemma}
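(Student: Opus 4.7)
The plan is to recognize $T_\nu$ as an integral operator with a continuous, bounded kernel on the compact square $[0,1]\times[0,1]$ and to invoke the standard Hilbert--Schmidt characterization of compactness. Since $|K_\nu(x,y)|=|\cos^2(2\nu|x-y|)|\leq 1$ for all $(x,y)\in[0,1]^2$, the kernel trivially belongs to $L^2([0,1]\times[0,1])$ with $\|K_\nu\|_{L^2([0,1]^2)}\leq 1$. Because any integral operator on $L^2$ whose kernel lies in $L^2$ of the product space is Hilbert--Schmidt, and Hilbert--Schmidt operators are compact, this would conclude the proof in one step.

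If a more self-contained argument is preferred, I would instead verify compactness directly via the Arzel\`a--Ascoli theorem. The idea is to show that $T_\nu$ maps a bounded set $B\subset L^2([0,1])$ into a relatively compact subset of $C([0,1])$, which then embeds continuously into $L^2([0,1])$. Uniform boundedness of $T_\nu(B)$ follows from the Cauchy--Schwarz inequality applied to the kernel, using $\|K_\nu(x,\cdot)\|_{L^2([0,1])}\leq 1$. Equicontinuity follows from the uniform continuity of $K_\nu$ on the compact square: for any $\varepsilon>0$ one can find $\delta>0$ such that $|K_\nu(x_1,y)-K_\nu(x_2,y)|<\varepsilon$ whenever $|x_1-x_2|<\delta$, and then Cauchy--Schwarz controls $|(T_\nu\mu)(x_1)-(T_\nu\mu)(x_2)|$ uniformly in $\mu\in B$.

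There is no substantial obstacle here, since the argument relies only on the boundedness and continuity of the trigonometric kernel on a compact domain; the only routine checks are the $L^2$ integrability of $K_\nu$ (for the Hilbert--Schmidt route) or the uniform continuity of $K_\nu$ on $[0,1]^2$ (for the Arzel\`a--Ascoli route). I would present the Hilbert--Schmidt version, since it is shortest and yields the explicit bound $\|T_\nu\|_{\mathrm{HS}}\leq \|K_\nu\|_{L^2([0,1]^2)}\leq 1$, which will be useful later when discussing the decay of singular values and the resulting ill-posedness of the inverse problem at a single frequency.
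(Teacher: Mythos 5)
Your Hilbert--Schmidt argument is correct and is essentially the same as the paper's proof, which likewise notes that $K_\nu(x,y)=\cos^2(2\nu|x-y|)$ is a bounded continuous kernel on $[0,1]^2$ and concludes compactness via the Hilbert--Schmidt property. The Arzel\`a--Ascoli alternative you sketch is fine but unnecessary.
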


\begin{proof}
	The kernel $K_\nu(x,y) = \cos^2(2\nu|x-y|)$ can be expressed as
	\[
	K_\nu(x,y) = \frac{1}{2} + \frac{1}{2}\cos(4\nu|x-y|).
	\]
	Since the cosine function is bounded and continuous, $K_\nu$ is a bounded continuous kernel. This implies that $T_\nu$ is a Hilbert-Schmidt operator, and hence compact (cf.\cite{reed1980methods}).
\end{proof}

\begin{lemma}[Self-adjointness]\label{lem:selfadjoint}
	$T_\nu$ is self-adjoint. i.e. $\langle T_\nu \mu, \phi \rangle = \langle \mu, T_\nu \phi \rangle$ for all $\mu, \phi \in L^2([0,1])$.
\end{lemma}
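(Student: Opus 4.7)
The plan is to reduce self-adjointness to a symmetry property of the kernel $K_\nu(x,y)=\cos^{2}(2\nu|x-y|)$. The key observation is that the kernel depends on $x$ and $y$ only through $|x-y|$, so $K_\nu(x,y)=K_\nu(y,x)$, and moreover $K_\nu$ is real-valued. For real kernels, symmetry of the kernel is precisely the condition for self-adjointness of the associated integral operator on the real $L^2$ space (or Hermitian symmetry on the complex $L^2$ space, which coincides with ordinary symmetry in the real case).

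Concretely, I would begin by writing out both sides of the claimed identity using the definition of $T_\nu$ from Definition~\ref{def:forward_problem}. That is, for arbitrary $\mu,\phi\in L^{2}([0,1])$, expand
\begin{equation*}
\langle T_\nu\mu,\phi\rangle=\int_{0}^{1}\!\!\int_{0}^{1}K_\nu(x,y)\mu(y)\phi(x)\,dy\,dx.
\end{equation*}
Next, I would justify applying Fubini's theorem: since $K_\nu$ is bounded (by $1$) on $[0,1]\times[0,1]$, the double integral $\int\!\!\int |K_\nu(x,y)\mu(y)\phi(x)|\,dx\,dy\le\|\mu\|_{L^{1}}\|\phi\|_{L^{1}}<\infty$, which is finite by Cauchy--Schwarz on a bounded interval. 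Hence the order of integration may be exchanged.

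After swapping the order of integration, I would exploit the symmetry $K_\nu(x,y)=K_\nu(y,x)$ (immediate from $|x-y|=|y-x|$) and relabel the dummy variables to obtain
\begin{equation*}
\langle T_\nu\mu,\phi\rangle=\int_{0}^{1}\mu(y)\!\left(\int_{0}^{1}K_\nu(y,x)\phi(x)\,dx\right)dy=\langle\mu,T_\nu\phi\rangle,
\end{equation*}
which is the desired identity. I do not foresee any serious obstacle: the only point worth checking carefully is the integrability hypothesis for Fubini, and this is immediate from the boundedness of $K_\nu$ together with the fact that $\mu,\phi\in L^{2}([0,1])\subset L^{1}([0,1])$ on a bounded interval. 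The argument is insensitive to whether we work in real or complex $L^{2}$, since $K_\nu$ takes real values and the conjugation in the complex inner product commutes with the real kernel.
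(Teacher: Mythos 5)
Your argument is correct and follows the same route as the paper: self-adjointness is deduced from the kernel symmetry $K_\nu(x,y)=K_\nu(y,x)$, with your version merely adding the (straightforward) Fubini justification that the paper leaves implicit.
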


\begin{proof}
	The kernel satisfies $K_\nu(x,y) = K_\nu(y,x)$ due to the symmetry of $|x-y|$. Therefore, $T_\nu$ is self-adjoint.
\end{proof}

\begin{theorem}[Ill-posedness]\label{thm:illposedness}
	For a fixed $\nu > 0$, the inverse problem $T_\nu \mu = H$ is ill-posed.\\
	$(1)$ The singular values $\{\sigma_n(\nu)\}$ satisfy $\sigma_n(\nu) \to 0$ as $n \to \infty$, so the inverse operator is unbounded.\\
	$(2)$ The null space $\mathcal{N}(T_\nu)$ is non-trivial for certain $\nu$.\\
	$(3)$ The solution does not depend continuously on the data.
\end{theorem}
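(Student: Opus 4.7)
The three claims in Theorem \ref{thm:illposedness} can be established by combining the compactness and self-adjointness of $T_\nu$ (Lemmas \ref{lem:compactness} and \ref{lem:selfadjoint}) with an explicit structural analysis of the kernel. My first step is to expand $K_\nu(x,y) = \tfrac12 + \tfrac12 \cos(4\nu|x-y|)$ via the identity $\cos(4\nu(x-y)) = \cos(4\nu x)\cos(4\nu y) + \sin(4\nu x)\sin(4\nu y)$, which puts $T_\nu$ in the separable form
\[ (T_\nu \mu)(x) = \tfrac12 \langle \mu, 1\rangle + \tfrac12 \cos(4\nu x)\langle \mu, \cos(4\nu \cdot)\rangle + \tfrac12 \sin(4\nu x)\langle \mu, \sin(4\nu \cdot)\rangle, \]
revealing that $T_\nu$ has rank at most three. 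This finite-rank structure is the backbone of the proof and will drive all three conclusions simultaneously.

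For (1), I would invoke the spectral theorem for compact self-adjoint operators on the infinite-dimensional Hilbert space $L^2([0,1])$, which forces any sequence of singular values to accumulate only at zero; in our case finite rank makes $\sigma_n(\nu)=0$ for $n\geq 4$, so $\sigma_n(\nu)\to 0$ is automatic. To deduce unboundedness of the inverse, I would argue by contradiction: if $T_\nu^{-1}$ existed as a bounded operator on $L^2([0,1])$, the identity $I = T_\nu^{-1}T_\nu$ would be compact, contradicting the non-compactness of the identity on an infinite-dimensional space.

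For (2), the inclusion $R(T_\nu) \subseteq V := \mathrm{span}\{1, \cos(4\nu y), \sin(4\nu y)\}$ is immediate from the separable representation above, and $V$ is closed since it is finite-dimensional. Self-adjointness gives $\mathcal{N}(T_\nu) = R(T_\nu)^\perp \supseteq V^\perp$, and $V^\perp$ is infinite-dimensional for every $\nu > 0$, hence nontrivial. If an explicit witness is desired, I would build one by starting from any function orthogonal to the constant (e.g.\ $\sin(2\pi m y)$ with $m$ large) and Gram--Schmidt--projecting it onto $V^\perp$.

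For (3), I would pick an orthonormal sequence $\{\psi_n\}\subset\mathcal{N}(T_\nu)$, available from (2), and observe that for any $\mu^*$ producing data $H = T_\nu\mu^*$, the perturbations $\mu_n := \mu^* + \psi_n$ all satisfy $T_\nu\mu_n = H$ while $\|\mu_n-\mu^*\|_{L^2} = 1$; no data-to-solution map can therefore depend continuously on $H$. The main step worth extra care --- and the likely obstacle --- is the exact reading of claim (1) when $T_\nu$ has only finitely many nonzero singular values: I will interpret ``inverse unbounded'' as the nonexistence of a bounded two-sided inverse on $L^2([0,1])$, which is the sharpest form of instability and is precisely what the compactness-of-identity argument above delivers.
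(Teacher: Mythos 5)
Your route is genuinely different from the paper's and, for parts (1) and (2), sharper. The paper argues abstractly: the Hilbert--Schmidt kernel gives compactness, the spectral theorem gives singular values accumulating at zero, the formal identity $T_\nu^{-1}\psi_n=\sigma_n(\nu)^{-1}\phi_n$ gives $\|T_\nu^{-1}\|=\sup_n \sigma_n(\nu)^{-1}=\infty$, the null space is exhibited through the explicit candidate $\mu(y)=\cos(8\nu y)$ at special frequencies such as $\nu=\pi/2$ (whose vanishing is only sketched, not verified), and (3) is referred back to unboundedness of the pseudo-inverse. Your expansion $\cos^2(2\nu|x-y|)=\tfrac12+\tfrac12\cos(4\nu x)\cos(4\nu y)+\tfrac12\sin(4\nu x)\sin(4\nu y)$ identifies a degenerate kernel, so $T_\nu$ has rank at most three. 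This yields compactness trivially, makes $\sigma_n(\nu)=0$ for $n\geq 4$, and via $\mathcal{N}(T_\nu)=R(T_\nu)^\perp\supseteq V^\perp$ shows the null space is infinite-dimensional for \emph{every} $\nu>0$, which is both stronger than the paper's ``certain $\nu$'' and fully rigorous where the paper's example is not; your compactness-of-the-identity contradiction for the nonexistence of a bounded inverse is also correct. The same observation exposes a soft spot in the paper's own treatment of (1) and (3): writing $1/\sigma_n(\nu)$ for all $n$ tacitly assumes infinitely many nonzero singular values, which the rank-three structure rules out.

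The gap in your proposal is in (3). From the non-uniqueness you exhibit ($T_\nu(\mu^*+\psi_n)=H$ with $\|\psi_n\|_{L^2}=1$) it does not follow that ``no data-to-solution map can depend continuously on $H$'': non-uniqueness only says the solution set is not a singleton, and a continuous single-valued selection may still exist. Indeed your own finite-rank observation shows that it does: $R(T_\nu)$ is finite-dimensional, hence closed, so the Moore--Penrose pseudo-inverse $T_\nu^\dagger$ is bounded and the minimal-norm solution depends continuously on data in the range. Consequently claim (3), in the standard regularization-theory sense the paper intends (unbounded $T_\nu^\dagger$, small data errors causing arbitrarily large solution errors), cannot be derived from your argument, and in fact is not available for this operator at all; the ill-posedness here consists of failure of uniqueness (your part (2)) and failure of existence for data with a nonzero component orthogonal to $V$. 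You should either prove (3) in that weaker Hadamard sense (e.g.\ perturb $H$ by any element of $V^\perp$, after which no solution exists) or explicitly state that the instability assertion requires reinterpretation under the rank-three structure.
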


\begin{proof}
	\textbf{(1)} By Lemma \ref{lem:compactness}, $T_\nu$ is compact. The spectral theorem for compact self-adjoint operators (Lemma \ref{lem:selfadjoint}) guarantees that the singular values $\sigma_n(\nu)$ accumulate at zero(cf.\cite{kress1999linear}, Chapter 15). 
	Indeed, since $T_{\nu}$ be a compact self-adjoint operator, according to the theory of Singular Value Decomposition (SVD), there exists a sequence of non-negative real numbers \( \{\sigma_n(\nu)\}_{n=1}^\infty \) , and orthonormal bases \( \{\phi_n\}_{n=1}^\infty \), \( \{\psi_n\}_{n=1}^\infty \), such that
	\[
	T_\nu \mu = \sum_{n=1}^\infty \sigma_n(\nu) \langle \mu, \phi_n \rangle \psi_n, \quad \forall \mu \in L^2([0,1]).
	\]
	where \( \phi_n = \psi_n \) (\(T_\nu\) self-adjoint), and \( \sigma_n(\nu) \) are the absolute values of the eigenvalues of \( T_\nu \).
	\[
	T_{\nu}^{-1} \psi_n = \frac{1}{\sigma_n(\nu)} \phi_n.
	\]
	
	\[
	\text{As } \sigma_n(\nu) \to 0, \quad \|T_{\nu}^{-1}\| = \sup_n \frac{1}{\sigma_n(\nu)} \to \infty.
	\]

	\textbf{(2)} We construct a non-zero $\mu$ such that $T_\nu \mu = 0$. For example, take $\mu(y) = \cos(8\nu y)$. Then
	\[
	T_\nu \mu(x) = \int_0^1 \cos^2(2\nu|x-y|)\cos(8\nu y)\,dy.
	\]
	Using trigonometric identities, we have
	\begin{align*}
		\cos^2(2\nu |x-y|) \cos(8\nu y) &= \frac{1}{2} \left[1 + \cos(4\nu |x-y|)\right] \cos(8\nu y) \\
		&= \frac{1}{2} \cos(8\nu y) + \frac{1}{4} \left[ \cos(4\nu |x-y| + 8\nu y) + \cos(4\nu |x-y| - 8\nu y) \right].
	\end{align*}
	For specific $\nu$ (e.g., $\nu = \pi/2$), this integral vanishes when integrated against $\cos(8\nu y)$, demonstrating non-trivial null space. 
	
	\textbf{(3)} Follows from \textbf{(1)} since the singular values $\sigma_n(\nu) \to 0$ imply that the pseudo-inverse $T_\nu^\dagger$ is unbounded, so small data perturbations can cause large solution changes (cf.\cite{engl1996regularization}, Theorem 2.14).  
\end{proof}

\begin{remark}
	Suppose the observational data \( H \) is perturbed slightly by \( \delta H \), then the corresponding perturbation of the solution is \( \delta \mu \).
	\[
	T_\nu(\mu + \delta \mu) = H + \delta H \quad \Longrightarrow \quad \delta \mu = T_\nu^\dagger \delta H.
	\]
	Since \( \|T_\nu^\dagger\| = +\infty \), even if \( \|\delta H\| \) is small, \( \|\delta \mu\| \) may still become arbitrarily large.
	
	Take the perturbation \( \delta H = \epsilon \psi_{n_k} \), where \( \psi_{n_k} \) corresponds to a singular value \( \sigma_{n_k}(\nu) \), then
	
	\[
	\delta \mu = \frac{\epsilon}{\sigma_{n_k}(\nu)} \phi_{n_k}
	\quad \Longrightarrow \quad
	\|\delta \mu\| = \left| \frac{\epsilon}{\sigma_{n_k}(\nu)} \right| \to +\infty
	\quad \text{as } \sigma_{n_k}(\nu) \to 0.
	\]
\end{remark}

\begin{remark}\label{rem:discrete_illposedness}
	In the discrete setting, for a matrix \( A^{(k)} \in \mathbb{R}^{M \times N} \), its Singular Value Decomposition (SVD) is given by
	\[
	A^{(k)} = U \Sigma V^T, \quad \Sigma = \text{diag}(\sigma_1, \sigma_2, \ldots, \sigma_r), \quad \sigma_1 \geq \sigma_2 \geq \cdots \geq \sigma_r > 0.
	\]
	where \( r = \min(M, N) \), and \( \sigma_i \) are the singular values. the matrix $A^{(k)}$ has exponentially decaying singular values, i.e., $\sigma_j \sim e^{-cj}, \, c > 0$, leading to large condition numbers \( \kappa(A^{(k)}) = \dfrac{\sigma_{\max}(A^{(k)})}{\sigma_{\min}(A^{(k)})} \) and numerical instability.
	In fact, when solving the linear system \( A^{(k)} \mu = H \), the relative error in the solution may be amplified
	\[
	\frac{\|\delta \mu\|}{\|\mu\|} \leq \kappa(A^{(k)}) \cdot \frac{\|\delta H\|}{\|H\|}.
	\]
	Even if the data perturbation \( \delta H \) is small, a large condition number can cause the solution error \( \delta \mu \) to grow explosively.
\end{remark}

The following content will elaborate on the recoverability of multi-frequency fusion, namely, a frequency set with sufficient density can guarantee the triviality of the zero-space.
\begin{theorem} (Uniqueness with dense frequencies)\label{Uniqueness}
	Let the frequency set $\{\nu_k\}_{k=1}^{\infty} \subset \mathbb{R}^+$ satisfy that $\{4\nu_k\}$ is dense in $[0,\infty)$. If $T_{\nu_k} \mu = 0$ for all $k$, then $\mu = 0$ in $L^2([0,1])$.
\end{theorem}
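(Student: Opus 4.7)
The plan is to exploit the double-angle identity $\cos^2(2\nu|x-y|) = \tfrac{1}{2} + \tfrac{1}{2}\cos(4\nu|x-y|)$ to turn the hypothesis $T_{\nu_k}\mu = 0$ into a family of vanishing cosine integrals, and then extract $\mu \equiv 0$ from a density argument plus injectivity of the cosine Fourier transform. Set $I := \int_0^1 \mu(y)\,dy$ and, for $\omega \geq 0$ and $x \in [0,1]$,
\[
F_\omega(x) := \int_0^1 \cos(\omega|x-y|)\,\mu(y)\,dy.
\]
Since the kernel $\cos^2(2\nu|x-y|)$ is bounded and continuous and $\mu \in L^2([0,1])$, the function $T_{\nu_k}\mu$ is continuous in $x$, so the hypothesis $T_{\nu_k}\mu = 0$ in $L^2$ is in fact the pointwise identity $I + F_{4\nu_k}(x) = 0$ for every $x \in [0,1]$ and every $k$.

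Next I would promote this from the discrete frequency set to a continuum. For each fixed $x \in [0,1]$, the map $\omega \mapsto F_\omega(x)$ is continuous on $[0,\infty)$ by dominated convergence (the integrand is pointwise continuous in $\omega$ and dominated by $|\mu(y)|$). Combining continuity with the hypothesized density of $\{4\nu_k\}$ in $[0,\infty)$ upgrades the identity to $I + F_\omega(x) = 0$ for every $\omega \geq 0$ and every $x \in [0,1]$. Evaluating at $\omega = 0$ gives $F_0(x) = I$, so $2I = 0$ and therefore $I = 0$; hence $F_\omega(x) \equiv 0$ for all $\omega \geq 0$ and $x \in [0,1]$.

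The final step is to specialize the free variable $x$ to expose a Fourier cosine transform. Taking $x = 0$, so $|x-y| = y$ for $y \in [0,1]$, yields
\[
\int_0^1 \cos(\omega y)\,\mu(y)\,dy \;=\; 0 \qquad \forall\,\omega \geq 0.
\]
Extending $\mu$ by zero to $[0,\infty)$, this is the vanishing of the cosine Fourier transform of a function in $L^2(\mathbb{R}_+)$; by Plancherel's theorem for the cosine transform (equivalently, injectivity of the Fourier transform applied to the even extension $\mu_e(y) := \mu(|y|)\mathbf{1}_{[-1,1]}(y)$), we conclude $\mu = 0$ almost everywhere on $[0,1]$.

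The argument is essentially a clean unwinding, so the main obstacle is really a pair of technical checks rather than a deep step. First, one must be careful that the operator equation $T_{\nu_k}\mu = 0$ in $L^2([0,1])$ can be read pointwise; this is justified by the continuity of $T_{\nu_k}\mu$ noted above, which also legitimizes evaluation at the boundary point $x = 0$. Second, the appeal to injectivity of the cosine Fourier transform should be stated carefully; if one prefers to avoid it, one may substitute any other $x_0 \in [0,1]$, split the integral at $y = x_0$, and reduce to injectivity of the standard Fourier transform on an even extension, arriving at the same conclusion $\mu \equiv 0$.
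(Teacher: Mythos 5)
Your proposal is correct and follows essentially the same route as the paper: split $\cos^2(2\nu|x-y|)=\tfrac12+\tfrac12\cos(4\nu|x-y|)$, kill the mean term, evaluate at $x=0$, and use density of $\{4\nu_k\}$ plus continuity in $\omega$ together with injectivity of the cosine transform. The only (harmless) differences are tactical: you obtain $\int_0^1\mu\,dy=0$ by letting $\omega\to 0$ through the dense frequency set, whereas the paper uses the Riemann--Lebesgue lemma along $\nu_k\to\infty$, and you finish via Fourier injectivity of the even extension rather than completeness of $\{\cos(n\pi y)\}$ in $L^2([0,1])$.
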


\begin{proof}
	Assume $T_{\nu_k} \mu = 0$ for all $k$, i.e.,
	\[
	\int_0^1 \cos^2(2\nu_k|x - y|) \mu(y) \, dy = 0, \quad \forall x \in [0,1], \, \forall k.
	\]
	By $K_{\nu}(x, y) = \frac{1}{2} + \frac{1}{2} \cos(4\nu(x - y)) \,(\text{since} \, \cos(4\nu |x - y|) = \cos(4\nu(x - y)))$, which is equivalent to
	\[
	\frac{1}{2} \int_0^1 \mu(y) \, dy + \frac{1}{2} \int_0^1 \cos(4\nu_k(x - y)) \mu(y) \, dy = 0, \quad \forall x, \, \forall k.
	\]
	Fix $x$, and let $k \to \infty$. The oscillatory integral $\int_0^1 \cos(4\nu_k(x - y)) \mu(y) \, dy \to 0$ (Riemann-Lebesgue Lemma), therefore
	\[
	\frac{1}{2} \int_0^1 \mu(y) \, dy = 0 \implies \int_0^1 \mu(y) \, dy = 0.
	\]
	Consequently,
	\[
	\int_0^1 \cos(4\nu_k(x - y)) \mu(y) \, dy = 0, \quad \forall x, \, \forall k.
	\]
	Fix $\nu_k$, the formula holds for all $x$. Let $x = 0$,
	\[
	\int_0^1 \cos(4\nu_k y) \mu(y) \, dy = 0, \quad \forall k.
	\]
	
	Define 
	\[
	\hat{\mu}_c(\omega) = \int_0^1 \cos(\omega y) \mu(y) \, dy.
	\]
	Then $\hat{\mu}_c(4\nu_k) = 0$ for all $k$. Since $\{4\nu_k\}$ is dense in $[0,\infty)$ and $\hat{\mu}_c$ is continuous, $\hat{\mu}_c(\omega) = 0$ for all $\omega \geq 0$. The system $\{\cos(n\pi y)\}_{n=0}^\infty$ is complete in $L^2([0,1])$, so $\mu = 0$(cf.\cite{folland1999real}).
\end{proof}

\begin{corollary}
	In the discrete setting with sufficiently large $K$ and dense frequency sampling, the null space $\mathcal{N}(A)$ is trivial.
	The null space of a matrix \( A \) is defined as
	\[
	\mathcal{N}(A) = \left\{ \mu \in \mathbb{R}^N \,\middle|\, A\mu = 0 \right\}.
	\]
	If \( \mathcal{N}(A) \) is trivial (i.e., it only contains the zero vector \( \mu = 0 \)), then the solution to the inverse problem is unique.
\end{corollary}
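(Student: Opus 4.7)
The strategy is to lift the continuous uniqueness result of Theorem~\ref{Uniqueness} to the discrete setting by identifying the vector $\mu\in\mathbb{R}^N$ with a piecewise constant function in $L^2([0,1])$ and using the fact that the rows of $A$ are quadrature approximations to the operator $T_{\nu_k}$ at the observation points.

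First, I would suppose $\mu=(\mu_1,\ldots,\mu_N)^T\in\mathcal{N}(A)$. Unpacking block by block, this means
\[
\Delta y\sum_{j=1}^{N}\cos^2\!\bigl(2\nu_k|x_i-y_j|\bigr)\mu_j=0,\qquad i=1,\ldots,M,\; k=1,\ldots,K.
\]
Define the piecewise constant extension $\mu_h(y):=\sum_{j=1}^{N}\mu_j\,\mathbf{1}_{[y_{j-1},y_j)}(y)\in L^2([0,1])$. Because each observation point $x_i\in I=[-1.2,-0.2]\cup[1.2,2.2]$ lies outside $[0,1]$, the kernel $y\mapsto\cos^2(2\nu_k|x_i-y|)$ is $C^\infty$ on the whole interval of integration, so the Riemann/midpoint sum above differs from the genuine integral $(T_{\nu_k}\mu_h)(x_i)=\int_0^1\cos^2(2\nu_k|x_i-y|)\mu_h(y)\,dy$ by a term that vanishes as $\Delta y\to0$.

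Next I would exploit the two hypotheses of the corollary. Refining the grid so that $\Delta y\to 0$ promotes the discrete equations to the exact identities $(T_{\nu_k}\mu_h)(x_i)=0$. Taking the set $\{x_i\}_{i=1}^M$ dense in $I$ and using the continuity of $x\mapsto(T_{\nu_k}\mu_h)(x)$ extends this to all $x\in I$. Finally, letting $K\to\infty$ with $\{4\nu_k\}$ dense in $[0,\infty)$ places us precisely in the hypothesis of Theorem~\ref{Uniqueness}. Applying that theorem to $\mu_h$ yields $\mu_h\equiv0$ in $L^2([0,1])$, and hence $\mu_j=0$ for every $j$, so $\mathcal{N}(A)=\{0\}$.

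The main obstacle is the quantitative one. The sketch above is asymptotic: it gives triviality of $\mathcal{N}(A)$ only in the joint limit $M,K,N^{-1}\to\infty$. Translating the abstract density of $\{4\nu_k\}$ into a concrete bound of the form ``$K\ge K_0(N)$ suffices'' requires a uniform lower bound on $\sigma_{\min}(A)$, which is delicate because the singular values of the underlying continuous operator $T_\nu$ decay fast (Theorem~\ref{thm:illposedness}) and the rows of different blocks $A^{(k)}$ become increasingly coherent when the frequencies $\nu_k$ cluster. A clean way to bypass this difficulty is to phrase the corollary as a limiting/generic statement — as stated — in which case the quadrature-plus-continuity argument above is sufficient; a fully quantitative version would instead require an analysis of the coherence of the sampled kernels $\{\cos^2(2\nu_k|x_i-y_j|)\}$ and is left outside the scope of the present discretization framework.
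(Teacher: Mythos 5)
The paper itself supplies no argument for this corollary: it is stated as the immediate discrete counterpart of Theorem \ref{Uniqueness}, so the only thing to assess is whether your proposed lifting actually proves the finite-dimensional claim. It does not, and you essentially concede this in your last paragraph. The corollary is a statement about a \emph{fixed} matrix $A\in\mathbb{R}^{MK\times N}$, but your argument is purely asymptotic. The step ``refining the grid so that $\Delta y\to 0$ promotes the discrete equations to the exact identities $(T_{\nu_k}\mu_h)(x_i)=0$'' is not available: a null vector $\mu$ is tied to the given grid, so for that $\mu$ the quadrature error between $\Delta y\sum_j\cos^2(2\nu_k|x_i-y_j|)\mu_j$ and $(T_{\nu_k}\mu_h)(x_i)$ is a fixed nonzero $O(\Delta y)$ quantity, and refining the grid replaces $A$ (and the space $\mathbb{R}^N$) rather than carrying $\mu$ along. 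Thus you only learn that $\mu_h$ is \emph{approximately} annihilated by $T_{\nu_k}$, and upgrading ``approximately in the kernel'' to ``zero'' would require a lower bound on the relevant singular values --- exactly what is unavailable here, since the operator is compact with singular values accumulating at zero (Theorem \ref{thm:illposedness}) and the discrete singular values decay exponentially (Remark \ref{rem:discrete_illposedness}). So the proposal proves a limiting statement, not the corollary as stated.

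A genuinely discrete argument is in fact more elementary and also exposes a constraint your sketch hides. Writing $\cos^2(2\nu_k|x_i-y_j|)=\tfrac12+\tfrac12\cos(4\nu_k x_i)\cos(4\nu_k y_j)+\tfrac12\sin(4\nu_k x_i)\sin(4\nu_k y_j)$ (legitimate because $x_i\notin[0,1]$), every row of $A^{(k)}$ lies in the span of the three vectors $\mathbf{1}$, $(\cos(4\nu_k y_j))_{j}$, $(\sin(4\nu_k y_j))_{j}$, so $\operatorname{rank}(A)\le 2K+1$; hence $\mathcal{N}(A)$ can be trivial only if $2K+1\ge N$, i.e.\ $K$ must grow proportionally to $N$ --- this is the concrete meaning of ``sufficiently large $K$'' and it is invisible in your limit argument. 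Conversely, once $2K+1\ge N$, triviality of $\mathcal{N}(A)$ reduces to linear independence of the sampled trigonometric vectors $\{\mathbf{1},(\cos(4\nu_k y_j))_j,(\sin(4\nu_k y_j))_j\}_{k=1}^K$ in $\mathbb{R}^N$, which for suitably chosen distinct frequencies (avoiding aliasing on the grid $y_j=j\Delta y$) follows from a Vandermonde-type determinant in the variables $z_k=e^{4i\nu_k\Delta y}$ --- the natural discrete analogue of the density-plus-completeness argument used in the proof of Theorem \ref{Uniqueness}. I would either rework the proposal along these finite-dimensional lines or restate the claim you actually prove as a limiting uniqueness result.
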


Subsequently, we will analyze how Tikhonov regularization makes the inverse problem well-posed.
\begin{theorem}[Existence and uniqueness]
	\label{thm:existence}
	For any $\alpha > 0$ and $H^{\text{obs}} \in L^2([0,1])$, the continuous objective functional is expressed as
	\[
	J(\mu) = \sum_{k=1}^K \| H^{\text{obs}}(\cdot, \nu_k) - T_{\nu_k} \mu \|_{L^2}^2 + \alpha \| \mu \|_{L^2}^2.
	\]
	which has a unique minimizer $\mu_\alpha \in L^2([0,1])$. In the discrete case, for any $\mathbf{H}^{\text{obs}} \in \mathbb{R}^{MK}$, there exists a unique $\mu_\alpha \in \mathbb{R}^N$ minimizing $J(\mu)$.
\end{theorem}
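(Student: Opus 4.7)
The plan is to recognize $J$ as a strictly convex, coercive, continuous quadratic functional on the Hilbert space $L^2([0,1])$ (or $\mathbb{R}^N$ in the discrete case) and then either invoke the Lax--Milgram lemma or, equivalently, verify that the associated normal equations admit a unique solution. The $\alpha>0$ term is what does all of the work.

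First I would expand the functional into its bilinear-plus-linear-plus-constant form, writing $J(\mu)=a(\mu,\mu)-2\ell(\mu)+\mathrm{const}$, where
\begin{equation*}
a(\mu,\eta)=\sum_{k=1}^{K}\langle T_{\nu_k}\mu,T_{\nu_k}\eta\rangle_{L^2}+\alpha\langle\mu,\eta\rangle_{L^2},\qquad \ell(\eta)=\sum_{k=1}^{K}\langle H^{\text{obs}}(\cdot,\nu_k),T_{\nu_k}\eta\rangle_{L^2}.
\end{equation*}
The form $a$ is symmetric; it is bounded because each $T_{\nu_k}$ is compact, hence bounded, by Lemma \ref{lem:compactness}; and because every term $\|T_{\nu_k}\mu\|^2$ is non-negative, it is $\alpha$-coercive: $a(\mu,\mu)\geq \alpha\|\mu\|_{L^2}^{2}$. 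The linear functional $\ell$ is bounded by Cauchy--Schwarz together with the operator bounds $\|T_{\nu_k}\|$.

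Next I would apply Lax--Milgram to obtain a unique $\mu_\alpha\in L^2([0,1])$ satisfying $a(\mu_\alpha,\eta)=\ell(\eta)$ for every $\eta\in L^2([0,1])$. Since $J$ is a strictly convex quadratic functional (strict convexity again coming from $\alpha>0$), this variational identity is exactly the first-order optimality condition $J'(\mu_\alpha)=0$, and its unique solution is the unique global minimizer. Equivalently, $\mu_\alpha$ is the unique solution of the normal equation
\begin{equation*}
\Bigl(\sum_{k=1}^{K}T_{\nu_k}^{*}T_{\nu_k}+\alpha I\Bigr)\mu_\alpha=\sum_{k=1}^{K}T_{\nu_k}^{*}H^{\text{obs}}(\cdot,\nu_k),
\end{equation*}
and the operator on the left is bounded below by $\alpha I$, hence boundedly invertible. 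For the discrete case the same argument applies verbatim with the global matrix $A$ from Remark \ref{rem:discrete_illposedness} in place of $\{T_{\nu_k}\}$: the Hessian $2(A^{T}A+\alpha I)$ is positive definite for every $\alpha>0$ regardless of the rank of $A$, so the normal equation $(A^{T}A+\alpha I)\mu_\alpha=A^{T}H^{\text{obs}}$ admits a unique solution in $\mathbb{R}^N$.

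There is essentially no obstacle: the strict positivity of $\alpha$ supplies coercivity and strict convexity simultaneously, and the remaining verifications are routine Hilbert-space bookkeeping. The only point worth highlighting is that, in contrast with the unregularized least-squares formulation, neither injectivity of the individual $T_{\nu_k}$ nor density of the set $\{4\nu_k\}$ required in Theorem \ref{Uniqueness} is needed here; as soon as $\alpha>0$ the Tikhonov-regularized problem is well-posed, and the dependence on frequency sampling only enters when one tries to control the approximation error $\|\mu_\alpha-\mu^{\dagger}\|$ between $\mu_\alpha$ and the true source.
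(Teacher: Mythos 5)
Your proposal is correct and takes essentially the same approach as the paper: both arguments rest on the fact that $\alpha>0$ makes the quadratic functional coercive and strictly convex, equivalently that the operator $\sum_{k=1}^{K}T_{\nu_k}^{*}T_{\nu_k}+\alpha I$ (resp.\ the matrix $A^{T}A+\alpha I$) is strictly positive definite, with boundedness of each $T_{\nu_k}$ supplied by Lemma~\ref{lem:compactness}. The only cosmetic difference is that you package the conclusion via Lax--Milgram and the normal equation, whereas the paper invokes the direct existence/uniqueness result for coercive strictly convex functionals after exhibiting the positive-definite Hessian, so no comparison beyond this is needed.
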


\begin{proof}
	\textit{Continuous case:} The functional $J$ is coercive and strictly convex since $\alpha > 0$. Specifically,
	\[
	J(\mu) \geq \alpha \| \mu \|_{L^2}^2 \to \infty \quad \text{as} \quad \| \mu \|_{L^2} \to \infty.
	\]
	and the Hessian operator $H_\mu = 2 \sum_{k=1}^K T_{\nu_k}^* T_{\nu_k} + 2\alpha I$ is strictly positive definite, where $T_{\nu_k}^*$ is the adjoint operator of $T_{\nu_k}$. Indeed, For any non-zero $\mu \in L^2([0,1])$,
	\[
	\langle H_\mu \mu, \mu \rangle = 2\sum_{k=1}^K \| T_{\nu_k} \mu \|_{L^2}^2 + 2\alpha \| \mu \|_{L^2}^2 > 0.
	\]
	Since $\alpha> 0$ and $\| \mu \|_{L^2}^2>0$, $H_\mu$ is strictly positive definite. Hence $J$ attains a unique minimum in $L^2([0,1])$(cf.\cite{ekeland1999convex}).
	
	\textit{Discrete case:} For any non-zero $\mu \in \mathbb{R}^N$,
	\[
	\mu^T (A^T A + \alpha I) \mu = \| A\mu \|_2^2 + \alpha \| \mu \|_2^2 > 0.
	\]
	Since $\alpha > 0$ and $\| \mu \|_2^2 > 0$, the Hessian matrix $\nabla^2 J(\mu) = 2A^T A + 2\alpha I$ is positive definite, so $J$ is strictly convex. Therefore, a unique minimizer exists.
\end{proof}

Next theorem shows that the change of the solution $\|\mu_{\alpha,1} - \mu_{\alpha,2}\|$ is amplified at most by a factor of $\frac{1}{\sqrt{\alpha}}$ with $\alpha > 0$.
\begin{theorem}[Stability]
	\label{thm:stability}
	The regularized solution $\mu_\alpha$ depends continuously on the data. If $\| H^{\text{obs}}_1 - H^{\text{obs}}_2 \| < \delta$ (continuous: $L^2$ norm; discrete: $\ell^2$ norm), then
	\[
	\| \mu_{\alpha,1} - \mu_{\alpha,2} \| \leq C(\alpha) \delta.
	\]
	where $C(\alpha) = \frac{1}{\sqrt{\alpha}}$.
\end{theorem}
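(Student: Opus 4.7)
The plan is to derive the stability estimate from the first-order optimality condition (normal equation) for the Tikhonov minimizer and then bound the resulting resolvent-type operator via its spectral decomposition; the $1/\sqrt{\alpha}$ factor will fall out of an elementary AM--GM inequality on the singular values. To unify the multi-frequency formulation, I would introduce the stacked operator $\mathcal{T}:L^2([0,1])\to\bigoplus_{k=1}^{K}L^2([0,1])$ by $\mathcal{T}\mu=(T_{\nu_1}\mu,\ldots,T_{\nu_K}\mu)$, equipped with the product norm $\|(h_1,\ldots,h_K)\|^2=\sum_{k=1}^{K}\|h_k\|_{L^2}^2$. Then the objective in Theorem~\ref{thm:existence} rewrites as $J(\mu)=\|H^{\text{obs}}-\mathcal{T}\mu\|^2+\alpha\|\mu\|_{L^2}^2$, and the unique minimizer $\mu_\alpha$ satisfies the Euler equation $(\mathcal{T}^{*}\mathcal{T}+\alpha I)\mu_\alpha=\mathcal{T}^{*}H^{\text{obs}}$.

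Subtracting the two Euler equations corresponding to $H_1^{\text{obs}}$ and $H_2^{\text{obs}}$ gives
\[
(\mathcal{T}^{*}\mathcal{T}+\alpha I)(\mu_{\alpha,1}-\mu_{\alpha,2})=\mathcal{T}^{*}(H_1^{\text{obs}}-H_2^{\text{obs}}),
\]
so the stability constant is controlled by $\|(\mathcal{T}^{*}\mathcal{T}+\alpha I)^{-1}\mathcal{T}^{*}\|$. Since each $T_{\nu_k}$ is compact and self-adjoint by Lemmas~\ref{lem:compactness} and \ref{lem:selfadjoint}, the operator $\mathcal{T}^{*}\mathcal{T}=\sum_{k=1}^{K}T_{\nu_k}^{*}T_{\nu_k}$ is a non-negative compact self-adjoint operator on $L^2([0,1])$. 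I would apply the spectral theorem to obtain eigenvalues $\lambda_j\geq 0$ with orthonormal eigenvectors $\{e_j\}$; the operator $(\mathcal{T}^{*}\mathcal{T}+\alpha I)^{-1}\mathcal{T}^{*}$ is then diagonalized, with filter factors $\sqrt{\lambda_j}/(\lambda_j+\alpha)$. The AM--GM inequality gives $\lambda_j+\alpha\geq 2\sqrt{\alpha\lambda_j}$, hence $\sqrt{\lambda_j}/(\lambda_j+\alpha)\leq 1/(2\sqrt{\alpha})\leq 1/\sqrt{\alpha}$, and therefore
\[
\|\mu_{\alpha,1}-\mu_{\alpha,2}\|_{L^2}\leq\frac{1}{\sqrt{\alpha}}\|H_1^{\text{obs}}-H_2^{\text{obs}}\|\leq\frac{\delta}{\sqrt{\alpha}}.
\]

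For the discrete case I would simply replay the argument with $\mathcal{T}$ replaced by the stacked matrix $A\in\mathbb{R}^{MK\times N}$ introduced before Remark~\ref{rem:discrete_illposedness}: the normal equation $(A^{T}A+\alpha I)\mu_\alpha=A^{T}\mathbf{H}^{\text{obs}}$, combined with the SVD $A=U\Sigma V^{T}$, gives the same filter factors $\sigma_j/(\sigma_j^2+\alpha)$ and the identical AM--GM bound by $1/(2\sqrt{\alpha})$. The main technical point to be careful about is precisely here: a naive application of Cauchy--Schwarz directly to the subtracted normal equation produces the weaker bound $C(\alpha)=\|\mathcal{T}^{*}\|/\alpha$, whereas the tight $1/\sqrt{\alpha}$ asserted in the theorem requires the full spectral/SVD analysis to couple the $\mathcal{T}^{*}$ factor against the resolvent $(\mathcal{T}^{*}\mathcal{T}+\alpha I)^{-1}$. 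A secondary, mild bookkeeping issue is to verify that the product-space norm on $\bigoplus_k L^2$ matches the sum-of-squares norm used to define $J$, which is immediate from the definitions but must be stated so that the resolvent bound transfers cleanly to the theorem's $L^2$ statement.
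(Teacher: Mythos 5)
Your proposal is correct, and in the continuous case it takes a genuinely different route from the paper. The paper proves the continuous case by a variational argument: it adds the two minimizing inequalities $J_1(\mu_{\alpha,1})\leq J_1(\mu_{\alpha,2})$, $J_2(\mu_{\alpha,2})\leq J_2(\mu_{\alpha,1})$, extracts $\alpha\|\mu_{\alpha,1}-\mu_{\alpha,2}\|^2\leq\sum_k\langle H_1^{\text{obs}}-H_2^{\text{obs}},T_{\nu_k}(\mu_{\alpha,1}-\mu_{\alpha,2})\rangle$, and applies Cauchy--Schwarz, which yields the constant $\frac{1}{\alpha}\sum_k\|T_{\nu_k}\|$ rather than the $\frac{1}{\sqrt{\alpha}}$ announced in the theorem; only in the discrete case does the paper invoke the representation $\mu_\alpha=(A^TA+\alpha I)^{-1}A^T\mathbf{H}^{\text{obs}}$ and assert (without detail) the SVD bound $\|(A^TA+\alpha I)^{-1}A^T\|\leq\frac{1}{2\sqrt{\alpha}}$. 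You instead pass directly to the normal equation $(\mathcal{T}^*\mathcal{T}+\alpha I)(\mu_{\alpha,1}-\mu_{\alpha,2})=\mathcal{T}^*(H_1^{\text{obs}}-H_2^{\text{obs}})$ in both settings and bound the resolvent factor by $\sup_j\sigma_j/(\sigma_j^2+\alpha)\leq\frac{1}{2\sqrt{\alpha}}$ via AM--GM; this unifies the two cases, supplies the SVD detail the paper leaves implicit, and—unlike the paper's continuous argument—actually delivers the stated constant $C(\alpha)=\frac{1}{\sqrt{\alpha}}$ with no dependence on $\sum_k\|T_{\nu_k}\|$, which is a genuine strengthening. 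One small point to tighten: the eigenvectors of $\mathcal{T}^*\mathcal{T}$ alone do not diagonalize $(\mathcal{T}^*\mathcal{T}+\alpha I)^{-1}\mathcal{T}^*$, since that operator acts on the product space $\bigoplus_k L^2$; you should either use the full singular system $(\sigma_j;e_j,h_j)$ of the compact operator $\mathcal{T}$, or bound $\|B\|^2=\|BB^*\|$ with $B=(\mathcal{T}^*\mathcal{T}+\alpha I)^{-1}\mathcal{T}^*$ and apply the spectral calculus to $BB^*=(\mathcal{T}^*\mathcal{T}+\alpha I)^{-2}\mathcal{T}^*\mathcal{T}$, obtaining $\sup_{\lambda\geq 0}\lambda/(\lambda+\alpha)^2=\frac{1}{4\alpha}$; either repair is routine and the conclusion stands.
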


\begin{proof}
	\textit{Continuous case:} Let $J_i(\mu) = \sum_{k=1}^K \| H_i^{\text{obs}} - T_{\nu_k} \mu \|_{L^2}^2 + \alpha \| \mu \|_{L^2}^2$ for $i=1,2$. By the minimizing property,
	\[
	J_1(\mu_{\alpha,1}) \leq J_1(\mu_{\alpha,2}), \quad J_2(\mu_{\alpha,2}) \leq J_2(\mu_{\alpha,1}).
	\]
	Adding these inequalities, we get
	\begin{align*}
		&\sum_k \| H_1^{\text{obs}} - T_{\nu_k} \mu_{\alpha,1} \|^2 + \alpha \| \mu_{\alpha,1} \|^2 + \sum_k \| H_2^{\text{obs}} - T_{\nu_k} \mu_{\alpha,2} \|^2 + \alpha \| \mu_{\alpha,2} \|^2 \\
		&\leq \sum_k \| H_1^{\text{obs}} - T_{\nu_k} \mu_{\alpha,2} \|^2 + \alpha \| \mu_{\alpha,2} \|^2 + \sum_k \| H_2^{\text{obs}} - T_{\nu_k} \mu_{\alpha,1} \|^2 + \alpha \| \mu_{\alpha,1} \|^2.
	\end{align*}
	By simplifying, we have
	\[
	\sum_k \left[ \| H_1^{\text{obs}} - T_{\nu_k} \mu_{\alpha,1} \|^2 + \| H_2^{\text{obs}} - T_{\nu_k} \mu_{\alpha,2} \|^2 \right] \leq \sum_k \left[ \| H_1^{\text{obs}} - T_{\nu_k} \mu_{\alpha,2} \|^2 + \| H_2^{\text{obs}} - T_{\nu_k} \mu_{\alpha,1} \|^2 \right].
	\]
	which implies
	\[
	\alpha \| \mu_{\alpha,1} - \mu_{\alpha,2} \|^2 \leq \sum_{k=1}^K \langle H_1^{\text{obs}} - H_2^{\text{obs}}, T_{\nu_k} (\mu_{\alpha,1} - \mu_{\alpha,2}) \rangle.
	\]
	By using Cauchy-Schwarz inequality, it can be writen as
	\[
	\alpha \| \mu_{\alpha,1} - \mu_{\alpha,2} \|^2 \leq \| H_1^{\text{obs}} - H_2^{\text{obs}} \| \cdot \left( \sum_{k=1}^K \| T_{\nu_k} \| \right) \| \mu_{\alpha,1} - \mu_{\alpha,2} \|.
	\]
	then
	\[
	\| \mu_{\alpha,1} - \mu_{\alpha,2} \| \leq \frac{1}{\alpha} \left( \sum_{k=1}^K \| T_{\nu_k} \| \right) \| H_1^{\text{obs}} - H_2^{\text{obs}} \| = C(\alpha) \delta.
	\]
	
	\textit{Discrete case:} The solution is $\mu_\alpha = (A^T A + \alpha I)^{-1} A^T \mathbf{H}^{\text{obs}}$, so
	\[
	\| \mu_{\alpha,1} - \mu_{\alpha,2} \| \leq \| (A^T A + \alpha I)^{-1} A^T \| \cdot \| \mathbf{H}^{\text{obs}}_1 - \mathbf{H}^{\text{obs}}_2 \|_2 \leq \frac{\sigma_{\max}(A)}{\alpha} \delta.
	\]
	In fact, it can be proved by Singular Value Decomposition
	\[
	\|(A^T A + \alpha I)^{-1} A^T\| \leq \frac{1}{2\sqrt{\alpha}}.
	\]
	Therefore,
	\[
	\|\mu_{\alpha,1} - \mu_{\alpha,2}\|_2 \leq \frac{\delta}{2\sqrt{\alpha}}.
	\]
\end{proof}

The following convergence theorem states that when the noise level $\delta \to 0$, by reasonably choosing the regularization parameter $\alpha(\delta)$, the solution $\mu_\alpha$ of Tikhonov regularization will converge to the true solution $\mu^\dagger$ (Stability) and the convergence rate can reach $O(\sqrt{\delta})$ (Optimality).
\begin{theorem}[Convergence]
	\label{thm:convergence}
	Let $\mu^{\dagger}$ be the true solution. For noisy data $H^{\text{obs}} = T_{\nu_k} \mu^{\dagger} + \eta_k$ with $\sum_{k=1}^K \| \eta_k \|_{L^2}^2 \leq \delta^2$ (continuous) or $\| \mathbf{H}^{\text{obs}} - A \mu^{\dagger} \|_2 \leq \delta$ (discrete). If $\alpha(\delta) \to 0$ and $\delta^2 / \alpha(\delta) \to 0$ as $\delta \to 0$, then
	\[
	\mu_{\alpha(\delta)} \to \mu^{\dagger} \quad \text{as} \quad \delta \to 0.
	\]
	in $L^2([0,1])$ (continuous) or $\mathbb{R}^N$ (discrete). If the source condition $\mu^{\dagger} = ( \sum_{k=1}^K T_{\nu_k}^* T_{\nu_k} )^{1/2} w$ holds for some $w \in L^2([0,1])$, then
	\[
	\| \mu_\alpha - \mu^{\dagger} \|_{L^2} = O(\sqrt{\delta}).
	\]
\end{theorem}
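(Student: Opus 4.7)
The plan is to prove both assertions by the classical Tikhonov regularization machinery in Hilbert spaces, leaning on the compactness and self-adjointness of the $T_{\nu_k}$ (Lemmas \ref{lem:compactness}--\ref{lem:selfadjoint}) together with the injectivity provided by Theorem \ref{Uniqueness}. Introduce the bounded self-adjoint operator $B := \sum_{k=1}^K T_{\nu_k}^* T_{\nu_k}$ and the stacked forward map $\mathbf{T}\mu = (T_{\nu_1}\mu,\ldots,T_{\nu_K}\mu)$, so that $\mathbf{T}^*\mathbf{T} = B$ and the minimizer $\mu_\alpha$ from Theorem \ref{thm:existence} is characterized by the normal equation $(B+\alpha I)\mu_\alpha = \mathbf{T}^* H^{\text{obs}}$. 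The remainder of the proof is spectral calculus on $B$.

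For the qualitative convergence, I first compare energies. Optimality of $\mu_\alpha$ gives $J(\mu_\alpha)\le J(\mu^\dagger)$, which upon expansion reads
\[
\sum_{k=1}^K \|T_{\nu_k}(\mu_\alpha-\mu^\dagger)-\eta_k\|^2 + \alpha\|\mu_\alpha\|^2 \;\le\; \delta^2 + \alpha\|\mu^\dagger\|^2.
\]
Two consequences drive the argument: (i) $\|\mu_{\alpha(\delta)}\|^2 \le \delta^2/\alpha(\delta) + \|\mu^\dagger\|^2$, which under the parameter choice yields a uniform bound and $\limsup_{\delta\to 0}\|\mu_{\alpha(\delta)}\| \le \|\mu^\dagger\|$; and (ii) $\sum_k\|T_{\nu_k}\mu_{\alpha(\delta)}-T_{\nu_k}\mu^\dagger\|^2 \to 0$. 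Extract a weakly convergent subsequence $\mu_{\alpha(\delta_n)} \rightharpoonup \tilde\mu$; continuity of each $T_{\nu_k}$ together with (ii) forces $T_{\nu_k}\tilde\mu = T_{\nu_k}\mu^\dagger$ for every $k$, and the injectivity from Theorem \ref{Uniqueness} identifies $\tilde\mu = \mu^\dagger$. Weak lower semicontinuity of the norm combined with the $\limsup$ bound then gives $\|\mu_{\alpha(\delta)}\| \to \|\mu^\dagger\|$, so the Radon--Riesz property upgrades weak to strong convergence.

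For the rate under the source condition $\mu^\dagger = B^{1/2}w$, I subtract to obtain the representation
\[
\mu_\alpha - \mu^\dagger \;=\; -\alpha(B+\alpha I)^{-1}\mu^\dagger \;+\; (B+\alpha I)^{-1}\mathbf{T}^*\eta,
\]
where $\eta=(\eta_1,\ldots,\eta_K)$ with $\|\eta\|\le\delta$. Polar decomposition $\mathbf{T} = U B^{1/2}$ together with the two elementary spectral bounds
\[
\sup_{\lambda\ge 0}\frac{\lambda^{1/2}}{\lambda+\alpha} \;=\; \frac{1}{2\sqrt\alpha}, \qquad \sup_{\lambda\ge 0}\frac{\alpha\lambda^{1/2}}{\lambda+\alpha} \;=\; \frac{\sqrt\alpha}{2},
\]
delivers $\|(B+\alpha I)^{-1}\mathbf{T}^*\eta\| \le \delta/(2\sqrt\alpha)$ and $\|\alpha(B+\alpha I)^{-1}B^{1/2}w\| \le \sqrt\alpha\,\|w\|/2$. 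Adding these and balancing with $\alpha(\delta)\sim\delta$ yields $\|\mu_\alpha - \mu^\dagger\| = O(\sqrt\delta)$. The discrete case is a verbatim translation using the singular value decomposition of $A$ in place of the spectral theorem for $B$, recovering Remark \ref{rem:discrete_illposedness} at finite dimension.

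The main obstacle lies in the first stage: passing from the energy estimate to the identification $\tilde\mu = \mu^\dagger$ of the weak limit requires injectivity of $\mathbf T$, whereas Theorem \ref{Uniqueness} only provides this for a dense family of frequencies. Strictly speaking, for the finite family $\{\nu_k\}_{k=1}^K$ one must either assume $\bigcap_k \mathcal{N}(T_{\nu_k})=\{0\}$ (in the spirit of the corollary to Theorem \ref{Uniqueness}), or replace $\mu^\dagger$ by the $\mathbf T$-minimum-norm least-squares solution. The upgrade from weak to strong convergence via Radon--Riesz is standard but deserves explicit statement, and the entire scheme ultimately rests on the continuous dependence in Theorem \ref{thm:stability}.
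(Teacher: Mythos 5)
Your proposal is correct, and for the rate estimate it follows essentially the same route as the paper: write the Euler--Lagrange/normal equation $(B+\alpha I)\mu_\alpha=\mathbf{T}^*H^{\text{obs}}$ with $B=\sum_k T_{\nu_k}^*T_{\nu_k}$, obtain $\mu_\alpha-\mu^\dagger=(B+\alpha I)^{-1}\bigl(\mathbf{T}^*\eta-\alpha\mu^\dagger\bigr)$, bound the noise term by $O(\delta/\sqrt{\alpha})$ and the approximation term by $O(\sqrt{\alpha}\,\|w\|)$ under the source condition, and balance with $\alpha\sim\delta$. Where you differ is in two respects, both to your credit. First, you actually prove the qualitative statement $\mu_{\alpha(\delta)}\to\mu^\dagger$ without the source condition, via the energy comparison $J(\mu_\alpha)\le J(\mu^\dagger)$, boundedness, weak subsequential limits, and the Radon--Riesz upgrade; the paper's proof skips this entirely (its argument only yields the rate under the source condition, and its discrete part stops at the SVD representation of $\mu_\alpha$ without any error bound), and as you note the identification of the weak limit needs either $\bigcap_k\mathcal{N}(T_{\nu_k})=\{0\}$ or replacing $\mu^\dagger$ by the minimum-norm least-squares solution — Theorem \ref{Uniqueness} only gives injectivity for a dense frequency family, so this hypothesis should be made explicit in the theorem. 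Second, your treatment of the approximation term is the correct one: you bound $\|\alpha(B+\alpha I)^{-1}B^{1/2}w\|\le\tfrac{\sqrt{\alpha}}{2}\|w\|$ by the spectral estimate $\sup_{\lambda\ge0}\alpha\sqrt{\lambda}/(\lambda+\alpha)=\sqrt{\alpha}/2$, whereas the paper's intermediate step, which splits this as $\alpha\|B^{1/2}w\|\cdot\|(B+\alpha I)^{-1}\|$, only yields the non-vanishing bound $\|B^{1/2}w\|$ since $\|(B+\alpha I)^{-1}\|\le1/\alpha$; your version repairs that step while reaching the same conclusion $\|\mu_\alpha-\mu^\dagger\|=O(\sqrt{\delta})$.
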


\begin{proof}
	\textit{Continuous case:} The minimizer $\mu_\alpha$ satisfies the Euler-Lagrange equation
	\[
	\sum_{k=1}^K T_{\nu_k}^* (T_{\nu_k} \mu_\alpha - H^{\text{obs}}_k) + \alpha \mu_\alpha = 0.
	\]
	Substituting $H^{\text{obs}}_k = T_{\nu_k} \mu^{\dagger} + \eta_k$, we obtain
	\[
	\sum_{k=1}^K T_{\nu_k}^* T_{\nu_k} (\mu_\alpha - \mu^{\dagger}) + \alpha \mu_\alpha = \sum_{k=1}^K T_{\nu_k}^* \eta_k.
	\]
	Thus,
	\[
	\mu_\alpha - \mu^{\dagger} = \left( \sum_{k=1}^K T_{\nu_k}^* T_{\nu_k} + \alpha I \right)^{-1} \left( \sum_{k=1}^K T_{\nu_k}^* \eta_k - \alpha \mu^{\dagger} \right).
	\]
	Taking norms and using the source condition
	\begin{align*}
		\| \mu_\alpha - \mu^{\dagger} \| &\leq \left\| \left( \sum_{k} T_{\nu_k}^* T_{\nu_k} + \alpha I \right)^{-1} \sum_{k} T_{\nu_k}^* \eta_k \right\| + \alpha \left\| \left( \sum_{k} T_{\nu_k}^* T_{\nu_k} + \alpha I \right)^{-1} \mu^{\dagger} \right\| \\
		&\leq \frac{\delta}{\sqrt{\alpha}} + \alpha \| (\sum_{k} T_{\nu_k}^* T_{\nu_k})^{1/2} w \| \cdot \left\| (\sum_{k} T_{\nu_k}^* T_{\nu_k} + \alpha I)^{-1} \right\| \\
		&\leq \frac{\delta}{\sqrt{\alpha}} + \sqrt{\alpha} \| w \|.
	\end{align*}
	Choosing $\alpha(\delta) \sim \delta$ gives $\| \mu_\alpha - \mu^{\dagger} \| = O(\sqrt{\delta})$.
	
	\textit{Discrete case:} Analogous with singular value decomposition of $A$(cf.\cite{hansen2010discrete}). For the discrete regularization problem
	\[
	\min_{\mu \in \mathbb{R}^N} J(\mu) = \|A\mu - \mathbf{H}^{\text{obs}}\|_2^2 + \alpha\|\mu\|_2^2.
	\]
	where \(A = [A^{(1)}; \dots ; A^{(K)}] \in \mathbb{R}^{MK \times N}\), noisy data \( \mathbf{H}^{\text{obs}} = A\mu^\dagger + \eta \), the noise satisfies \( \|\eta\|_2 \leq \delta \).
	
	Expanding the objective function into a quadratic form
	\[
	J(\mu) = (A\mu - \mathbf{H}^{\text{obs}})^T (A\mu - \mathbf{H}^{\text{obs}}) + \alpha \mu^T \mu.
	\]
	by computing the derivative with respect to $ \mu $
	
	\[
	\nabla J(\mu) = 2A^T(A\mu - \mathbf{H}^{\text{obs}}) + 2\alpha\mu.
	\]
	then, we get
	\[
	A^TA\mu + \alpha\mu = A^T\mathbf{H}^{\text{obs}}.
	\]
	namely,
	\[ \mu_\alpha = (A^T A + \alpha I)^{-1} A^T \mathbf{H}^{\text{obs}}. \]
	
	Perform Singular Value Decomposition on \( A \)
	\[
	A = U\Sigma V^T, \quad \Sigma = \text{diag}(\sigma_1, \dots, \sigma_r), \quad \sigma_1 \geq \dots \geq \sigma_r > 0.
	\]
	then the explicit solution can be expressed as
	\[
	\mu_\alpha = V(\Sigma^T \Sigma + \alpha I)^{-1} \Sigma^T U^T \mathbf{H}^{\text{obs}}.
	\]
	and we can take the component form
	\[
	\mu_\alpha = \sum_{i=1}^r \frac{\sigma_i}{\sigma_i^2 + \alpha} \left( u_i^T \mathbf{H}^{\text{obs}} \right) v_i.
	\]
	
\end{proof}

\begin{remark}
	Multi-frequency data improves the condition number. The minimum singular value $\sigma_{\min}(A)$ of the global matrix satisfies
	\[
	\sigma_{\min}(A) \geq \min_k \sigma_{\min}(A^{(k)}).
	\]
	thus, the condition number \( \kappa(A) = \sigma_{\max}(A)/\sigma_{\min}(A) \) decreases, and the stability is enhanced.
	
	In fact, for any unit vector \( x \in \mathbb{R}^n \) (i.e., \( \|x\|_2 = 1 \)),
	\[
	\|Ax\|_2^2 = \sum_{k=1}^K \|A^{(k)}x\|_2^2 \geq \sum_{k=1}^K \left( \min_{\|y\|_2 = 1} \|A^{(k)}y\|_2 \right)^2 = \sum_{k=1}^K \sigma_{\min}(A^{(k)})^2.
	\]
	therefore,
	\[
	\sigma_{\min}(A) = \min_{\|x\|_2 = 1} \|Ax\|_2 \geq \sqrt{\sum_{k=1}^K \sigma_{\min}(A^{(k)})^2}.
	\]
\end{remark}

The above analysis established a rigorous theoretical framework for solving ill-posed 1D Helmholtz inverse problems using multi-frequency data fusion and Tikhonov regularization. Key contributions include
\begin{itemize}[left=5pt]
	\item Characterization of ill-posedness through compact operator theory (Theorem \ref{thm:illposedness})
	\item Uniqueness guarantees via dense frequency sampling (Theorem \ref{Uniqueness})
	\item Well-posedness of regularized formulation (Theorems \ref{thm:existence}, \ref{thm:stability}, \ref{thm:convergence})
\end{itemize}

The framework provides mathematical foundations for stable reconstruction of $\mu$ from multi-frequency measurements. Future work includes convergence rate analysis, optimal parameter selection, and extensions to higher dimensions.

\subsubsection{Numerical experiment results}
After conducting numerical experiments by incorporating the strength functions from the literature \cite{li2020inverse} and defining more complex strength functions, the following presents the numerical case results and a brief analysis.\\
i) True strength function $\mu(y)=0.5(1-\cos(2\pi y))$, ii) True strength function $\mu(y)=0.6-0.3\cos(2\pi y)-0.3\cos(4\pi y)$.
\begin{figure}[ht]
	\centering
	\subfigure[$\nu = 1:3$]{\includegraphics[width=0.45\textwidth]{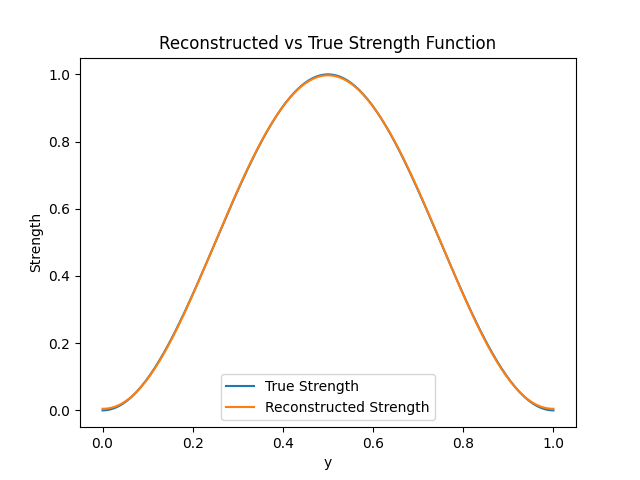}}
	\hspace{1cm}
	\subfigure[$\nu = 1:4$]{\includegraphics[width=0.45\textwidth]{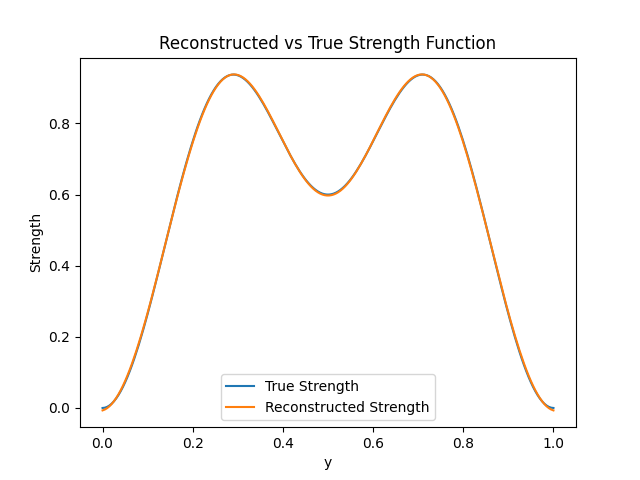}}
	\caption{Single Fourier mode}
	\label{f1}  
\end{figure}

i) and ii) correspond to (a) and (b) in Figure \ref{f1}. For simple real intensity functions with a single Fourier mode, it can be seen that three to four frequency data points produce a good reconstruction result.

\noindent
iii) True strength function $\mu(y) = 0.5e - 0.3e^{\cos(4\pi y)} - 0.2e^{\cos(6\pi y)}$, iv) True strength function $\mu(y) = 0.5e^{\cos(6\pi y)} - 0.3e^{\sin(8\pi y)}$, v) True strength function $\mu(y) = 0.6e - 0.5e^{\cos(6\pi y)} - 0.3e^{\sin(8\pi y)}$.\\
\begin{figure}[h]
	\centering
	\subfigure[$\nu = 2:11$]{\includegraphics[width=0.45\textwidth]{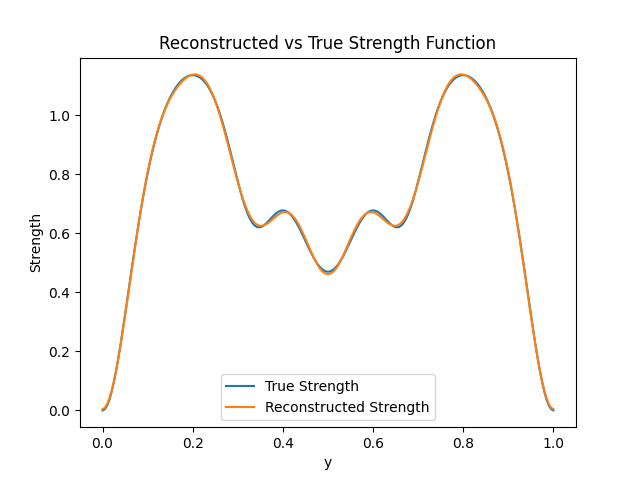}}
	\hspace{2cm}
	\subfigure[$\nu = 2:11$]{\includegraphics[width=0.45\textwidth]{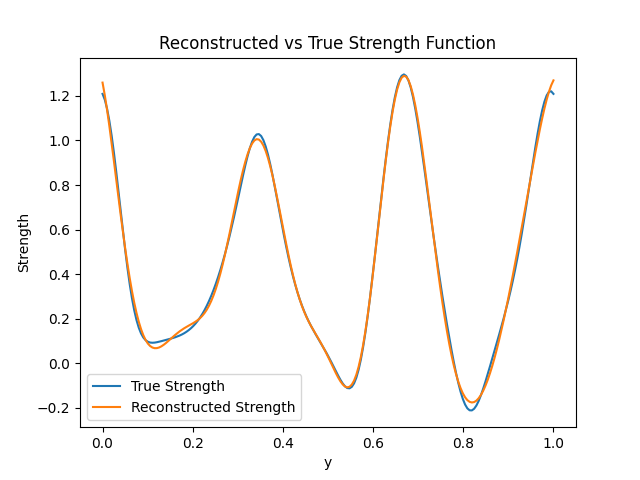}}
	\subfigure[$\nu = 2:11$]{\includegraphics[width=0.45\textwidth]{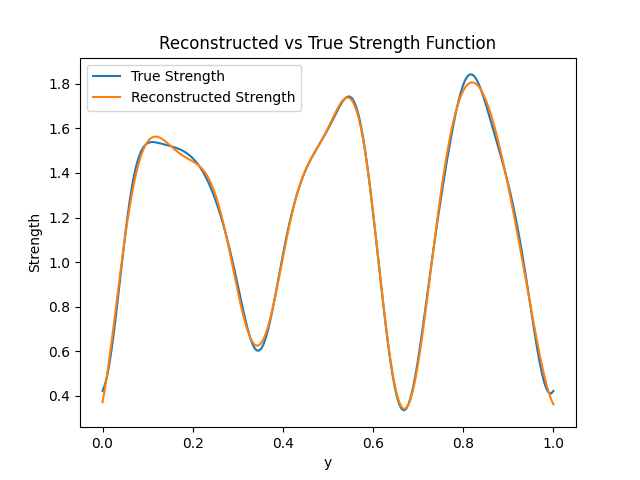}}
	\caption{Multiple Fourier mode}
	\label{f2}  
\end{figure}

iii), iv), and v) correspond to (a), (b), and (c) in Figure \ref{f2}. For more complex real strength functions with multiple Fourier modes, approximately 10 frequency data points are sufficient to achieve good reconstruction results.

Final numerical results analysis: for strength functions with a single Fourier mode, only 2 to 4 frequency data points are needed to achieve good reconstruction results without attenuation; for strength functions with multiple Fourier modes, only 10 frequency data points are needed to achieve good reconstruction results without attenuation. Compared to the 16 frequency data points used in \cite{li2020inverse} under the $\sigma>0$ scenario, there has been a significant improvement. By incorporating CPU parallel computing, the speed of data generation and optimization processes for different frequency data points has been enhanced, resulting in a substantial increase in program runtime efficiency. In summary, the combined use of multi-frequency data fusion and regularization methods has to some extent mitigated the ill-posedness issue of the one-dimensional stochastic Helmholtz equation without attenuation.

\section{Stochastic wave equations driven by finite-jump Lévy processes}
In this section, we analyze the direct problem of a stochastic wave equation driven by a finite-jump Lévy process and obtain stability estimates for its mild solutions. Subsequently, we discuss the inverse problem and reconstruct the source terms $f(x)$ and $g(x)$ based on a spectral decomposition method using global modes.

\subsection{Direct problem}
\begin{theorem}
	If $f,g\in L^2(D)$ and $\|g\|_{L^2(D)}\neq 0$, under the condition that the support of the non-negative function $h\in L^{\infty}(0,T)$ is a positive measure, the mild solution $(\ref{mild})$ will satisfy certain stability estimate
	\begin{equation}
		\begin{aligned}
			E \left[ \left\| u \right\|^2_{L^2(D \times [0,T])} \right] &\leq \frac{2T^4}{3} \| h \|^2_{L^\infty} \| f \|^2_{L^2(D)} + \left( \frac{2T^4 b^2 + T^3 \cdot \sigma^2 + T^3 \lambda_p \sigma_j^2}{3} \right) \| g \|^2_{L^2(D)}\\
			&\leq C \left( \| f \|^2_{L^2(D)} + \| g \|^2_{L^2(D)} \right).
		\end{aligned}
	\end{equation}
	where $C>0$ depends on time $T$, drift coefficient $b$, diffusion coefficient $\sigma$, and jump parameters $\lambda_{p}$ and $\sigma_{j}^2$.
\end{theorem}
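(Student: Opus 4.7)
The plan is to project the mild solution onto the eigenbasis $\{\varphi_k\}$ of $-\Delta$ with Dirichlet conditions, apply the classical isometries for Brownian and compound Poisson stochastic integrals componentwise, and then sum in $k$ via Parseval's identity. Writing $u_k(t) = A_k(t) + B_k(t)$ with
\begin{equation*}
A_k(t) = f_k \int_0^t \frac{\sin((t-\tau)\sqrt{\lambda_k})}{\sqrt{\lambda_k}} h(\tau)\, d\tau, \qquad B_k(t) = g_k \int_0^t \frac{\sin((t-\tau)\sqrt{\lambda_k})}{\sqrt{\lambda_k}}\, dL_\tau,
\end{equation*}
and decomposing the Lévy contribution via $dL_\tau = b\, d\tau + \sigma\, dW_\tau + dL_\tau^{\mathrm{jump}}$ yields $B_k = B_k^{\mathrm{det}} + B_k^{\mathrm{diff}} + B_k^{\mathrm{jump}}$, which is exactly the decomposition already introduced in the Preliminary.

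First I would bound the two deterministic pieces using the elementary estimate $|\sin((t-\tau)\sqrt{\lambda_k})/\sqrt{\lambda_k}| \leq t-\tau$ together with Cauchy--Schwarz:
\begin{equation*}
|A_k(t)|^2 \leq f_k^2 \|h\|_{L^\infty}^2 \cdot t \int_0^t (t-\tau)^2\, d\tau = \tfrac{t^4}{3}\|h\|_{L^\infty}^2 f_k^2, \qquad |B_k^{\mathrm{det}}(t)|^2 \leq \tfrac{t^4}{3} b^2 g_k^2.
\end{equation*}
Next I would treat the two centered stochastic integrals. Itô's isometry gives
\begin{equation*}
\mathbb{E}|B_k^{\mathrm{diff}}(t)|^2 = g_k^2 \sigma^2 \int_0^t \left|\frac{\sin((t-\tau)\sqrt{\lambda_k})}{\sqrt{\lambda_k}}\right|^2 d\tau \leq \tfrac{t^3}{3}\sigma^2 g_k^2,
\end{equation*}
and the analogous $L^2$ isometry for compound Poisson integrals, with Poisson arrival intensity $\lambda_p$ and $\mathcal{N}(0,\sigma_j^2)$ jump amplitudes, yields $\mathbb{E}|B_k^{\mathrm{jump}}(t)|^2 \leq \frac{t^3}{3}\lambda_p\sigma_j^2 g_k^2$.

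To combine these four estimates, I would exploit the fact that $B_k^{\mathrm{diff}}$ and $B_k^{\mathrm{jump}}$ are independent of each other and of the deterministic pair $(A_k, B_k^{\mathrm{det}})$, and that each centered integral has mean zero, so all cross terms vanish upon taking expectation. The elementary inequality $(a+b)^2 \leq 2(a^2+b^2)$ applied to $A_k + B_k^{\mathrm{det}}$ then produces
\begin{equation*}
\mathbb{E}|u_k(t)|^2 \leq \tfrac{2t^4}{3}\|h\|_{L^\infty}^2 f_k^2 + \tfrac{2t^4 b^2 + t^3\sigma^2 + t^3\lambda_p\sigma_j^2}{3}\, g_k^2.
\end{equation*}
Summing over $k$ and invoking Parseval's identity $\sum_k f_k^2 = \|f\|_{L^2(D)}^2$, $\sum_k g_k^2 = \|g\|_{L^2(D)}^2$, then integrating (or evaluating at $t=T$) gives the claimed estimate with $C = \max\{\tfrac{2T^4}{3}\|h\|_{L^\infty}^2,\ \tfrac{2T^4 b^2 + T^3\sigma^2 + T^3\lambda_p\sigma_j^2}{3}\}$.

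The main obstacle will be the rigorous second-moment isometry for the compound Poisson integral under the specific assumptions of this paper (Gaussian jump amplitudes with Poisson-distributed interarrival times), and ensuring that the three components of $dL_\tau$ can be decoupled in the mean-square sense without incurring a compensator correction; once this is done, the remainder reduces to Cauchy--Schwarz bookkeeping and an application of Parseval, both of which are essentially mechanical.
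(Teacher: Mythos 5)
Your proposal follows essentially the same route as the paper's proof: spectral expansion in the Dirichlet eigenbasis, the bound $|\sin((t-\tau)\sqrt{\lambda_k})/\sqrt{\lambda_k}|\le t-\tau$ with Cauchy--Schwarz and $(a+b)^2\le 2(a^2+b^2)$ for the deterministic part, It\^o isometry for the Brownian part, a Campbell-type second-moment identity for the compound Poisson part (which, with centered Gaussian jumps, indeed needs no compensator correction), and Parseval to sum over modes. The only cosmetic difference is that you make the vanishing of the cross terms explicit, whereas the paper bounds the three components separately and adds them; the resulting estimate and constants are the same.
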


\begin{proof}
	$a)$ In the stability estimate, $u_{\text{det}}(x,t)$ is expanded in terms of the characteristic function system of the Laplace operator
	\[ u_{\text{det}}(x,t) = \sum_{k=1}^{\infty} \left( \int_0^t \frac{\sin\left( (t-\tau)\sqrt{\lambda_k} \right)}{\sqrt{\lambda_k}} \left( f_k h(\tau) + g_k b \right) d\tau \right) \varphi_k(x). \]
	where $f_k=\langle f,\varphi_{k}\rangle$ and $g_k=\langle g,\varphi_{k}\rangle$.
	The $L^2(D)$-norm of the solution is the sum of the squares of the contributions from each mode
	\[ \left\| u_{\text{det}} \right\|^2_{L^2(D)} = \sum_{k=1}^{\infty} \left( \int_0^t \frac{\sin\left( (t-\tau)\sqrt{\lambda_k} \right)} {\sqrt{\lambda_k}} \left( f_k h(\tau) + g_k b \right) d\tau \right)^2. \]
	
	Apply the Cauchy-Schwarz inequality to the integral for each mode
	\[ \left( \int_0^t \frac{\sin\left( (t-\tau)\sqrt{\lambda_k} \right)}{\sqrt{\lambda_k}} \left( f_k h(\tau) + g_k b \right) d\tau \right)^2 \leq \left( \int_0^t \frac{\sin^2\left( (t-\tau)\sqrt{\lambda_k} \right)}{\lambda_ k} d\tau \right) \left( \int_0^t \left( f_k h(\tau) + g_k b \right)^2 d\tau \right). \]
	Estimating the first term on the right-hand side of the inequality, since for any real number $x$, $\sin^2(x) \leq x^2$, we get
	\[ \int_0^t \frac{\sin^2\left( (t-\tau)\sqrt{\lambda_k} \right)}{\lambda_k} d\tau \leq \int_0^t (t-\tau)^2 d\tau = \frac{t^3}{3}. \]
	Estimating the second term, since $(a+b)^2\leq 2(a^2+b^2)$, we have
	\[ \int_0^t \left( f_k h(\tau) + g_k b \right)^2 d\tau \leq 2 \left( f_k^2 \| h \|_{L^\infty}^2 \int_0^t d\tau + g_k^2 b^2 \int_0^t d\tau \right) = 2t \left( f_k^2 \| h \|_{ L^\infty}^2 + g_k^2 b^2 \right). \]
	Substituting this gives the estimate for each mode
	\[ \left( \int_0^t \frac{\sin\left( \cdot \right)}{\lambda_k} \left( f_k h + g_k b \right) d\tau \right)^2 \leq \frac{t^3}{3} \cdot 2t \left( f_k^2 \| h \|_{L^\infty}^2 + g_k^2 b^2 \right) = \frac{ 2t^4}{3} \left( f_k^2 \| h \|_{L^\infty}^2 + g_k^2 b^2 \right). \]
	Take the summation over all modes
	\[ \left\| u_{\text{det}} \right\|^2_{L^2(D)} \leq \frac{2t^4}{3} \sum_{k=1}^{\infty} \left( \| h \|_{L^\infty}^2 f_k^2 + b^2 g_k^2 \right). \]
	Furthermore,
	\[ \left\| u_{\text{det}} \right\|^2_{L^2(D)} \leq \frac{2t^4}{3} \left( \| h \|_{L^\infty}^2 \sum_{k=1}^{\infty} f_k^2 + b^2 \sum_{k=1}^{\infty} g_k^2 \right). \]
	and
	\[ \sum_{k=1}^{\infty} f_k^2 = \| f \|^2_{L^2(D)}, \quad \sum_{k=1}^{\infty} g_k^2 = \| g \|^2_{L^2(D)}. \]
	Finally, we obtain
	\[ \left\| u_{\text{det}} \right\|^2_{L^2(D)} \leq \frac{2t^4}{3} \left( \| h \|_{L^\infty}^2 \| f \|^2_{L^2(D)} + b^2 \| g \|^2_{L^2(D)} \right). \]
	Let \(C_1(T) = \frac{2t^4}{3} \max \left( \| h \|_{L^\infty}^2, b^2 \right)\), then we obtain
	\[ \left\| u_{\text{det}} \right\|^2_{L^2(D)} \leq C_1(T) \left( \| f \|^2_{L^2(D)} + \| g \|^2_{L^2(D)} \right). \]
	$b)$ The solution to the diffusion part is $u_{\text{diff}}(x, t) = \sigma \int_0^t K(x,t-\tau) g(x) dW_\tau$, where the kernel function is \[ K(x,t-\tau) = \sum_{\substack{k=1}}^{\infty} \frac{\sin \left( (t-\tau) \sqrt{\lambda_k} \right)}{\sqrt{\lambda_k}} \varphi_k(x). \]
	\[ \| u_{\text{diff}} \|^2_{L^2(D)} = \int_D \left( \sigma \left( \int_0^t K(x,t-\tau) g(x) dW_{\tau} \right)^2 \right) dx. \]
	Using Fubini's theorem, we have
	\[ E\left[ \| u_{\text{diff}} \|^2_{L^2(D)} \right] = \sigma^2 \int_D E\left[ \left( \int_0^t K(x,t-\tau) g(x) dW_{\tau} \right)^2 \right] dx. \]
	Since $E\left[ \left( \int_0^t H_{\tau} dW_{\tau} \right)^2 \right] = E\left[ \int_0^t H_{\tau}^2 d\tau \right]$, where $H_{\tau} = K(x,t-\tau) g(x)$, we have
	\[ E\left[ \left( \int_0^t K(x,t-\tau) g(x) dW_{\tau} \right)^2 \right] = E\left[ \int_0^t K(x,t-\tau)^2 g(x)^2 d\tau \right]. \]
	Since $K(x,t-\tau)^2 g(x)^2$ is deterministic, we get
	\[ E\left[ \int_0^t K(x,t-\tau)^2 g(x)^2 d\tau \right] = \int_0^t K(x,t-\tau)^2 g(x)^2 d\tau. \]
	Thus, \[ E\left[ \| u_{\text{diff}} \|^2_{L^2(D)} \right] = \sigma^2 \int_D \int_0^t K(x,t-\tau)^2 g(x)^2 d\tau dx. \]
	By interchanging the order of integration, we have
	\[ E\left[ \| u_{\text{diff}} \|^2_{L^2(D)} \right] = \sigma^2 \int_0^t \int_D K(x,t-\tau)^2 g(x)^2 dx d\tau. \]
	The inner integral is the square of the $L^2(D)$-norm of the kernel function and $g(x)$
	\[ E\left[ \| u_{\text{diff}} \|^2_{L^2(D)} \right] = \sigma^2 \int_0^t \| K(\cdot,t-\tau) g(\cdot) \|^2_{L^2(D)} d\tau. \]
	Expanding $g(x)$ in terms of the characteristic function system, we have
	\[ g(x) = \sum_{k=1}^{\infty} g_k \varphi_k(x), \quad g_k = \langle g, \varphi_k \rangle_{L^2(D)}. \]
	Then the product of the kernel function and $g(x)$ is
	\[ K(x,t-\tau)g(x) = \sum_{k=1}^{\infty} \frac{\sin\left((t-\tau)\sqrt{\lambda_k}\right)}{\sqrt{\lambda_k}} g_k \varphi_k(x). \]
	Using the orthogonality $\langle \varphi_k, \varphi_l \rangle = \delta_{kl}$, we obtain
	\[ \left\| K(\cdot, t - \tau)g(\cdot) \right\|^2_{L^2(D)} = \sum_{k=1}^{\infty} \left( \frac{\sin\left( (t - \tau) \sqrt{\lambda_k} \right)}{\sqrt{\lambda_k}} g_k \right)^2. \]
	Substituting the expected expression, we obtain
	\[ E\left[ \| u_{\text{diff}} \|^2_{L^2(D)} \right] = \sigma^2 \int_0^t \sum_{k=1}^{\infty} \frac{\sin^2\left( (t - \tau) \sqrt{\lambda_k} \right)}{\lambda_k} g_k^2 \, d\tau. \]
	An upper bound estimate is obtained for each k term
	\[ \int_0^t \frac{\sin^2\left( (t - \tau) \sqrt{\lambda_k} \right)}{\lambda_k} \, d\tau \leq \int_0^t (t - \tau)^2 \, d\tau = \frac{t^3}{3}. \]
	Therefore, \[ E\left[ \| u_{\text{diff}} \|^2_{L^2(D)} \right] \leq \frac{t^3 \cdot \sigma^2}{3} \sum_{k=1}^{\infty} g_k^2 = \frac{t^3 \cdot \sigma^2}{3} \| g \|^2_{L^2(D)}. \]
	In summary, there exists a constant $C_2$ related to $\sigma$ and $t$ such that
	\[ E\left[ \| u_{\text{diff}} \|^2_{L^2(D)} \right] \leq C_2 \| g \|^2_{L^2(D)}. \]
	
	For convenience, the following assumptions are needed.
	\begin{Assumption}
		Assume that the jump amplitude $J_s$ of the finite-jump Lévy process $L_t$ follows a normal distribution, and the jump time interval $s_j$ follows a Poisson distribution. i.e.,
		\begin{itemize}[left=5pt]
			\item The jump magnitudes $\left\{ J_{S_j} \right\}_{j=1}^{N_T}$ are independent and identically distributed, following $N(0,\sigma_{j}^2)$.
			\item The jump times $\left\{ S_j \right\}_{j=1}^{N_T}$ are spaced according to a Poisson distribution with parameter $\lambda_p$. (i.e., the expected number of jumps per unit time is $\lambda_p$)
		\end{itemize}
		At this point, the Lévy process decomposes into
		\[ L_t = bt + \sigma W_t + \sum_{j=1}^{N_t} J_{S_j}. \]
		where $N_t \sim \text{Poisson}(\lambda_p t)$ and $J_{S_j} \overset{\text{i.i.d.}}{\sim} N(0, \sigma_j^2)$.\\
		\label{assum:2}
	\end{Assumption}\noindent
	$c)$ Since the jump times are a random point process, the solution requires integration over the Poisson process, and equation (\ref{jump}) becomes
	\[ u_{\text{jump}}(x,t) = \sum_{j=1}^{N_t} K(x,t - s_j) g(x) J_{s_j} = \int_0^t K(x,t - \tau) g(x) \sum_{j=1}^{N_\tau} J_{s_j} \delta_{s_j} (d\tau). \]
	Thus, \[ \| u_{\text{jump}} \|^2_{L^2(D)} = \int_D \left( \sum_{j=1}^{N_t} \sum_{k=1}^{\infty} \frac{\sin\left( (T - s_j) \sqrt{\lambda_k} \right)}{\sqrt{\lambda_k}} g_k J_{s_j} \varphi_k(x) \right)^2 dx. \]
	Since the characteristic functions $\{\varphi_k\}$ are orthogonal, the cross terms integrate to zero, simplifying to
	\[ \left\| u_{\text{jump}} \right\|^2_{L^2(D)} = \sum_{k=1}^{\infty} \sum_{j=1}^{N_l} \left( \frac{\sin \left( \left( T - s_j \right) \sqrt{\lambda_k} \right)}{\sqrt{\lambda_k}} g_k J_{s_j} \right)^2. \]
	Taking the expectation and expanding the square
	\[ E\left[ \left\| u_{\text{jump}} \right\|^2_{L^2(D)} \right] = \sum_{k=1}^{\infty} E \left[ \sum_{j=1}^{N_l} \frac{\sin^2 \left( \left( T - s_j \right) \sqrt{\lambda_k} \right)}{\lambda_k} g_k^2 J_{s_j }^2 \right]. \]
	For the Poisson process $N_t$, by Campbell's theorem, the stochastic sum is converted to an integral
	\[ E \left[ \sum_{j=1}^{N_t} F(s_j) \right] = \lambda_p \int_0^t F(s) \, ds. \]
	where $F(s) = \frac{\sin^2 \left( (T - s_j) \sqrt{\lambda_k} \right)}{\lambda_k} g_k^2 E[J_{s_j}^2]$, substituting gives
	\[ E \left[ \left\| u_{\text{jump}} \right\|^2_{L^2(D)} \right] = \sum_{k=1}^{\infty} \lambda_p \sigma_j^2 \int_0^t \frac{\sin^2 \left( (T - s_j) \sqrt{\lambda_k} \right)}{\lambda_k} g_k^2 \, ds. \]
	Since $\int_0^t \frac{\sin^2 \left( (t-s) \sqrt{\lambda_k} \right)}{\lambda_k} \, ds \leq \int_0^t (t-s)^2 \, ds = \frac{t^3}{3}$, we obtain
	\[ E\left[ \left\| u_{\text{jump}} \right\|^2_{L^2(D)} \right] \leq \frac{t^3}{3} \lambda_p \sigma_j^2 \| g \|^2_{L^2(D)}. \]
	Namely, there exist constants $C_3$ related to $\sigma_{j}^2$, $\lambda_{p}$, and $t$ such that
	\[ E\left[ \left\| u_{\text{jump}} \right\|^2_{L^2(D)} \right] \leq C_3 \| g \|^2_{L^2(D)}. \]
	By (a), (b), and (c), we have
	Under the assumptions that $f,g\in L^2(D) $ and $b,\sigma\in \mathbb{R}$, the mild solution satisfies the following stability estimate
	\begin{align*}
		E \left[ \left\| u \right\|^2_{L^2(D \times [0,T])} \right] &\leq \frac{2T^4}{3} \| h \|^2_{L^\infty} \| f \|^2_{L^2(D)} + \left( \frac{2T^4 b^2 + T^3 \cdot \sigma^2 + T^3 \lambda_p \sigma_j^2}{3} \right) \| g \|^2_{L^2(D)}\\
		&\leq C \left( \| f \|^2_{L^2(D)} + \| g \|^2_{L^2(D)} \right).
	\end{align*}	
\end{proof}

In the following subsection, we will discuss the inverse problem.
\subsection{Inverse Problem}
For the ill-posedness of the inverse problem of a stochastic wave equation driven by a finite-jump Lévy process, it is necessary to design a regularization method. Currently, we consider handling the deterministic part by observing the mean (expectation) of the final-time data and handling the random source term by observing the covariance of the final-time data. This section analyzes the reconstruction of the deterministic term $f(x)$ and the random source term $g(x)$.

\subsubsection{Reconstruction of the deterministic term $f(x)$}
Expand the deterministic part into a set of eigenfunctions of the Laplace operator
\[ u_{\text{det}}(x,T) = \sum_{k=1}^{\infty} \left( f_k \int_0^T h(\tau) \frac{\sin((T-\tau)\sqrt{\lambda_k})}{\sqrt{\lambda_k}} d\tau + g_k b \int_0^T \frac{\sin((T-\tau)\sqrt{\lambda_k})}{\sqrt{\lambda_k}} d\tau \right) \varphi_k(x). \]
Using the uniqueness of the linear part: if $h(t)$ satisfies the non-degenerate condition (e.g., $h(t)$ is not zero everywhere on $[0,T]$), the reconstruction equation for the spectral coefficients
\[ E[u_k(T)] = f_k \int_0^T h(\tau) \frac{\sin((T-\tau)\sqrt{\lambda_k})}{\sqrt{\lambda_k}} d\tau + g_k b \int_0^T \frac{\sin((T-\tau)\sqrt{\lambda_k})}{\sqrt{\lambda_k}} d\tau. \]
If \(b = 0\) (i.e., there is no drift term in the Lévy process), then
\[ E[u_k(T)] = f_k \cdot \int_0^T h(\tau) \frac{\sin((T-\tau)\sqrt{\lambda_k})}{\sqrt{\lambda_k}} d\tau. \]
So we directly recover
\begin{equation}
	f_k = \frac{E[u_k(T)]}{\int_0^T h(\tau) \frac{\sin((T-\tau)\sqrt{\lambda_k})}{\sqrt{\lambda_k}} d\tau}.
	\label{f}
\end{equation}

\subsubsection{Reconstruction of random source term $g(x)$}
The linear property of the covariance function indicates that
\begin{equation*}
	\text{Cov}(u(x,T),u(y,T))=E\left[ (u-E[u])(u-E[u])^T \right].
\end{equation*}
Substituting the decomposed solution
\begin{align*}
	\text{Cov}(u(x,T), u(y,T)) &= E\left[(u_{\text{diff}} + u_{\text{jump}})(u_{\text{diff}} + u_{\text{jump}}) \right] \\
	&= E[u_{\text{diff}}(x,T) u_{\text{diff}}(y,T)] + E[u_{\text{jump}}(x,T) u_{\text{jump}}(y,T)].
\end{align*}

In fact, the cross term $E[u_{\text{diff}} u_{\text{jump}}]$
is zero, since the diffusion term $u_{\text{diff}}$ is driven by Brownian motion and the jump term $u_{\text{jump}}$ is driven by a jump process, and the two random sources are independent. Furthermore, the increments of Brownian motion and the jump process both have zero mean (even if the jump amplitude $J_{s}$ is non-zero, it is canceled out by centralization when calculating the covariance).

Assuming that the diffusion coefficient $\sigma $ and jump parameter $J_{s}$ are known, and by the properties of the Itô integral, the increments of Brownian motion at different time points are independent and orthogonal, then the covariance structure of the diffusion component
\begin{equation*}
	E\left[u_{\text{diff},k}(T) u_{\text{diff},l}(T)\right] = \sigma^2 E\left[\int_0^T \dfrac{\sin\left((T-\tau) \sqrt{\lambda_k}\right)}{\sqrt{\lambda_k}}   g_k dW_{\tau} \cdot \int_0^T \dfrac{\sin\left((T-\tau) \sqrt{\lambda_l}\right)}{\sqrt{\lambda_l}} g_l dW_{\tau}\right].
\end{equation*}
By the Itô equidistality and the independent incrementality of Brownian motion, the right-hand side of the above equation will be equal to
\begin{equation*}
	\sigma^2 g_k g_l \dfrac{1}{\sqrt{\lambda_k \lambda_l}}  \int_0^T \sin\left((T-\tau) \sqrt{\lambda_k}\right) \sin\left((T-\tau) \sqrt{\lambda_l}\right) d\tau.
\end{equation*}
Covariance structure of the jump part
\begin{align*}
	E[u_{\text{jump},k}(T) u_{\text{jump},l}(T)] &= \sum_{j=1}^{N_T} E\left[J_{s_j}^2 \frac{\sin((T - s_j) \sqrt{\lambda_k})}{\sqrt{\lambda_k}} \cdot \frac{\sin((T - s_j) \sqrt{\lambda_l})}{\sqrt{\lambda_l}} \right] g_k \cdot g_l \\ 
	&= \lambda_p E[J^2] \int_0^T \frac{\sin((T - \tau) \sqrt{\lambda_k})}{\sqrt{\lambda_k}} \cdot \frac{\sin((T - \tau) \sqrt{\lambda_l})}{\sqrt{\lambda_l}} g_k g_l d\tau.  
\end{align*}
Consequently,
\begin{align*}
	&\text{Cov}(u_k, u_l)\\ &= \sigma^2 g_k g_l \int_0^T \frac{\sin\left((T-\tau) \sqrt{\lambda_k}\right) \cdot \sin\left((T-\tau) \sqrt{\lambda_l}\right)}{\sqrt{\lambda_k \lambda_l}} d\tau + \lambda_p \sigma_j^2 \int_0^T \frac{\sin\left((T-\tau) \sqrt{\lambda_k}\right) \cdot \sin\left((T-\tau) \sqrt{\lambda_l}\right)}{\sqrt{\lambda_k \lambda_l}} g_k g_l d\tau \\
	&= g_k g_l \cdot (\sigma^2 + \lambda_p \sigma_j^2) \cdot I_{kl}.
\end{align*}
where $I_{kl}=\int_0^T \dfrac{\sin\left((T-\tau) \sqrt{\lambda_k}\right)\cdot \sin\left((T-\tau)\sqrt{\lambda_l}\right)}{\sqrt{\lambda_k \lambda_l}} d\tau$, thus
\begin{equation}
	g_k g_l = \frac{\text{Cov}(u_k, u_l)}{(\sigma^2 + \lambda_p \sigma_j^2) \cdot I_{kl}}
	\label{g}.
\end{equation}
The following theorem discusses the uniqueness of the inverse source problem.

\begin{theorem}
	Assume that Assumption $\ref{assum:2}$ holds. \\
	$(1)$ If $h(t)$ is a monotonically increasing function and $T$ is a rational number, or if $h(t)$ is a strictly monotonically increasing function and $T$ is a real number. then $f$ is uniquely determined by $\{E[u_k(T)]; k\in \mathbb{N}\}$. \\
	$(2)$ If $T$ is an arbitrary algebraic number, then the source term $g$ is uniquely determined by $\text{Cov}(u_k(T), u_l(T)); k,l\in \mathbb{N}$.
\end{theorem}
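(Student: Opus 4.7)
The plan is to derive both uniqueness claims directly from the explicit reconstruction formulas \eqref{f} and \eqref{g}, exploiting the fact that $\{\varphi_k\}_{k=1}^{\infty}$ is a complete orthonormal basis of $L^2(D)$. It therefore suffices to recover each spectral coefficient of the source individually from the observed statistics. In both parts the core task reduces to showing that the ``denominator'' in the relevant reconstruction formula never vanishes, for then the linear maps $f_k \mapsto E[u_k(T)]$ and $(g_k g_l) \mapsto \mathrm{Cov}(u_k(T),u_l(T))$ are injective.

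For part (1), I would focus on the non-vanishing of
\[ D_k(T) := \int_0^T h(\tau)\,\frac{\sin\bigl((T-\tau)\sqrt{\lambda_k}\bigr)}{\sqrt{\lambda_k}}\,d\tau. \]
After the substitution $s=T-\tau$, $D_k(T)$ becomes an oscillatory integral against the monotone decreasing function $h(T-s)$. First I would apply Bonnet's form of the second mean value theorem to reduce $D_k(T)$ to a boundary value of $h$ multiplied by a partial $(1-\cos)$-type integral whose sign is controlled by the endpoints of $[0,T]$. When $h$ is strictly monotone, the boundary contribution is strictly positive and the remaining oscillatory term cannot cancel it, so $D_k(T)\neq 0$ for every real $T>0$ and every $k$. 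In the merely monotone case one has to rule out the degeneracy where $h$ is piecewise constant with discontinuities coinciding with zeros of $\sin(s\sqrt{\lambda_k})$; restricting $T$ to rational values makes this resonance set discrete in the relevant parameter space, and a case analysis on the sign pattern of $\sin(s\sqrt{\lambda_k})$ on the intervals where $h$ is strictly increasing completes the argument. Formula \eqref{f} then uniquely determines $f_k$, and hence $f$, from $\{E[u_k(T)]\}_{k\in\mathbb{N}}$.

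For part (2), the first step is an explicit computation of $I_{kl}$. The product-to-sum identity followed by the substitution $u=T-\tau$ yields, for $k\neq l$,
\[ I_{kl} = \frac{1}{2\sqrt{\lambda_k\lambda_l}}\left[\frac{\sin\bigl(T(\sqrt{\lambda_k}-\sqrt{\lambda_l})\bigr)}{\sqrt{\lambda_k}-\sqrt{\lambda_l}} - \frac{\sin\bigl(T(\sqrt{\lambda_k}+\sqrt{\lambda_l})\bigr)}{\sqrt{\lambda_k}+\sqrt{\lambda_l}}\right], \]
and for $k=l$, $I_{kk}=\frac{T}{2\lambda_k}-\frac{\sin(2T\sqrt{\lambda_k})}{4\lambda_k^{3/2}}$. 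The Lindemann--Weierstrass theorem asserts that $\sin\beta$ is transcendental whenever $\beta$ is a nonzero algebraic number; since $T$ is algebraic by assumption and the sine arguments above are algebraic multiples of $T$, any identity $I_{kl}=0$ would force an algebraic relation among transcendental values that contradicts the linear independence over the algebraic numbers of $\{e^{\pm iT(\sqrt{\lambda_k}\pm\sqrt{\lambda_l})}\}$. Hence $I_{kl}\neq 0$ for all $k,l$. Formula \eqref{g} then determines $g_k g_l$ for every pair; taking $k=l$ recovers $|g_k|$, and the off-diagonal entries pin down the relative signs, so $g=\sum g_k\varphi_k$ is determined up to a global sign that is fixed by the normalization implicit in $\|g\|_{L^2}\neq 0$.

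The main obstacle is the Lindemann--Weierstrass step in part (2): the expression for $I_{kl}$ is a \emph{difference} of two transcendental quantities, so transcendence of each summand individually does not immediately preclude cancellation. Overcoming this requires either a linear-independence form of Lindemann--Weierstrass applied to the four exponentials $e^{\pm iT(\sqrt{\lambda_k}\pm\sqrt{\lambda_l})}$, or a separate argument for the degenerate sub-cases where the frequencies $\sqrt{\lambda_k}\pm\sqrt{\lambda_l}$ fail to be in general position (e.g.\ when $\sqrt{\lambda_k}/\sqrt{\lambda_l}$ is itself algebraic). Part (1) in the merely monotone sub-case also demands care, but the tools there are classical (second mean value theorem combined with a finite combinatorial analysis of sign changes), which is why the theorem hypothesis restricts $T$ to the rationals in that setting.
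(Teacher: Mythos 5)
Your overall strategy coincides with the paper's: reduce both uniqueness claims to the non-vanishing of the denominators in the reconstruction formulas (\ref{f}) and (\ref{g}), compute $I_{kl}$ explicitly, and exclude $I_{kl}=0$ for algebraic $T$ by transcendence. For part (2) your execution is in fact cleaner than the paper's own write-up: after arriving at $(k+l)e^{i(k-l)T}-(k+l)e^{-i(k-l)T}-(k-l)e^{i(k+l)T}+(k-l)e^{-i(k+l)T}=0$, the paper detours through a matrix $\left(e^{ik_jT_l}\right)$ built from several algebraically independent times and a Vandermonde factorization, whereas you propose applying the linear-independence form of Lindemann--Weierstrass directly to the four exponentials $e^{\pm i(k-l)T}$, $e^{\pm i(k+l)T}$, whose exponents are distinct nonzero algebraic numbers with nonzero integer coefficients; this is exactly the ``missing linear-independence step'' you flagged, and it is the right way to close the argument. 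Note that in the paper's setting $\lambda_k=k^2$, so $\sqrt{\lambda_k}\pm\sqrt{\lambda_l}$ are integers and your worry about frequencies failing to be in general position does not arise; also the diagonal case is immediate from $|\sin x|<|x|$, giving $I_{kk}>0$. For part (1) the paper gives no proof at all (it defers to Section 4.1 of \cite{feng2021inverse}), so your sketch cannot be compared line by line; it is in the same spirit (non-negativity of $\int_0^T h(T-s)\sin\bigl(s\sqrt{\lambda_k}\bigr)\,ds$ for monotone $h$, with rationality of $T$ excluding the resonance $\sqrt{\lambda_k}\,T\in 2\pi\mathbb{Z}$ because $\pi$ is irrational), but, as you acknowledge, the strict non-vanishing in the merely monotone case is left incomplete.

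One genuine flaw in your wrap-up of part (2): the covariance data only determines the products $g_kg_l$, so $g$ and $-g$ produce identical data, and the global sign is \emph{not} fixed by $\|g\|_{L^2}\neq 0$, since $\|-g\|_{L^2}=\|g\|_{L^2}$. The paper itself concedes this sign ambiguity in the remark following the numerical design. The correct conclusion is that $g$ is determined up to a global sign (equivalently, $g_kg_l$, or $g^2$, is determined), and your final sentence should be weakened accordingly rather than appealing to a normalization that cannot distinguish $g$ from $-g$.
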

\begin{proof}
	The proof process for \textbf{(1)} is similar to the discussion of equation $(4.1)$ in Section 4.1 of \cite{feng2021inverse}, so it will not be repeated here. For the discussion of case \textbf{(2)}, we know that the eigenvalues and eigenvectors of the Laplace operator $-\Delta$
	\[ \lambda_k = k^2, \quad \varphi_k = \sqrt{\frac{2}{\pi}} \sin(kx), \quad k = 1, 2, \dots \]
	
	Considering the integral definition
	\begin{equation*}
		I_{kl} = \int_0^T \dfrac{\sin\left(k(T - \tau)\right)}{k}  \cdot \dfrac{\sin\left(l(T - \tau)\right)}{l} d\tau.
	\end{equation*}
	The simplified equivalent form (using the substitution $s=T - \tau $, $ds= -d\tau$)
	\begin{equation*}
		I_{kl} = \frac{1}{kl} \int_0^T \sin(ks) \cdot \sin(ls) \, ds.
	\end{equation*}
	when $k=l$, the integral simplifies to
	\begin{equation*}
		I_{kk} =\dfrac{1}{k^2}  \int_0^T \sin^2(ks) \, ds = \dfrac{1}{k^2} \left[ \frac{T}{2} - \frac{\sin(2kT)}{4k}  \right].
	\end{equation*}
	when $k \neq l$, using the trigonometric identity $\sin A \sin B = \frac{1}{2} \left[ \cos(A - B) - \cos(A + B) \right]$, the integral becomes
	\begin{equation*}
		I_{kl} =\dfrac{1}{2kl}  \int_0^T \left[ \cos((k-l)s) - \cos((k+l)s) \right] \, ds.
	\end{equation*}
	After calculating, we obtain
	\begin{equation}
		I_{kl} =\dfrac{1}{2kl} \left[ \frac{\sin\left((k-l)T\right)}{k-l} - \frac{\sin\left((k+l)T\right)}{k+l} \right].
		\label{ikl}
	\end{equation}
	If $k>l$, it is clear that for any algebraic $T$, $I_{kl}\neq 0$. If $I_{kl}=0$, then
	\[ \frac{\sin\left((k-l)T\right)}{k-l} - \frac{\sin\left((k+l)T\right)}{k+l}=0. \]
	From $\sin \theta = \frac{e^{i\theta}-e^{-i\theta}}{2i}$, we have
	\[ (k + l) e^{i(k - l)T} - (k + l) e^{-i(k - l)T} - (k - l) e^{i(k + l)T} + (k - l) e^{-i(k + l)T} = 0 .\]
	Let there be a set of distinct integers $\{k_1,k_2,\dots,k_n\}$ and a set of distinct real parameters $\{T_1,T_2,\dots,T_n\}$ such that $T_j$ is an algebraically independent number (satisfying the conditions of the Lindemann–Weierstrass theorem, see [\cite{baker2022transcendental}]). Define the matrix
	\[ M = \left( e^{i k_j T_l} \right)_{1 \leq j,l \leq n} .\]
	by expanding
	\[M = 
	\begin{pmatrix}
		e^{i k_1 T_1} & e^{i k_1 T_2} & \cdots & e^{i k_1 T_n} \\
		e^{i k_2 T_1} & e^{i k_2 T_2} & \cdots & e^{i k_2 T_n} \\
		\vdots       & \vdots       & \ddots & \vdots \\
		e^{i k_n T_1} & e^{i k_n T_2} & \cdots & e^{i k_n T_n}
	\end{pmatrix}
	\]
	Let
	\[
	z_l := e^{i T_l} \quad (1 \leq l \leq n).
	\]
	then
	\[
	M_{jl} = z_l^{k_j}.
	\]
	Assume that $k_j=k_1+(j-1)$, i.e., a set of consecutive increasing integers. Under this assumption, $M$ can be written as
	\[
	M = D \cdot V, \quad \text{where} \quad D = \mathrm{diag}(z_1^{k_1}, z_2^{k_1}, \ldots, z_n^{k_1}), \quad V_{jl} = z_l^{j-1}.
	\]
	$V$ is the classical Vandermonde matrix
	\[
	V = 
	\begin{pmatrix}
		1 & 1 & \cdots & 1 \\
		z_1 & z_2 & \cdots & z_n \\
		z_1^2 & z_2^2 & \cdots & z_n^2 \\
		\vdots & \vdots & \ddots & \vdots \\
		z_1^{n-1} & z_2^{n-1} & \cdots & z_n^{n-1}
	\end{pmatrix}
	\]
	and vandermonde determinant formula
	\[
	\det(V) = \prod_{1 \leq i < j \leq n} (z_j - z_i).
	\]
	Therefore,
	\[
	\det(M) = \det(D) \cdot \det(V) = \left( \prod_{l=1}^{n} z_l^{k_1} \right) \cdot \prod_{1 \leq i < j \leq n} (z_j - z_i).
	\]
	Since $z_l = e^{i T_l}$ and $T_l$ are distinct, $z_l$ are also distinct. The Lindemann–Weierstrass theorem guarantees the independence of $T_l$, so there are no repeated terms in $z_j$. Therefore, the Vandermonde determinant $\prod_{i<j} (z_j - z_i) \ne 0$. And since each $z_l^{k_1} \ne 0$, the overall determinant $\det(M) \ne 0$, which contradicts $I_{kl}=0$. Finally, for any $k,l\in \mathbb{N}$ and any algebraic $T$, we have $I_{kl}\neq 0$.
	In fact, when $k\approx l$, the denominator $k - l$ causes numerical instability. To address this, consider using the complex exponential method:
	Using Euler's formula $\cos(\theta) = \Re\left[e^{i\theta}\right]$, the integral can be transformed into
	\begin{equation*}
		\int_0^T \cos(ms) \, ds = \Re\left[\int_0^T e^{ims} \, ds \right].
	\end{equation*}
	Calculating the complex integral, we obtain
	\begin{equation*}
		\int_0^T e^{ims} \, ds = \frac{e^{imT} - 1}{im}.
	\end{equation*}
	Expanding into real and imaginary parts
	\begin{equation*}
		\dfrac{e^{imT} - 1}{im} = \dfrac{\cos(mT) + i \sin(mT) - 1}{im} = \dfrac{\sin(mT)}{m} + \dfrac{1 - \cos(mT)}{m}i.
	\end{equation*}
	Thus,
	\begin{equation*}
		I_{kl} = \dfrac{1}{2kl} \left[ \dfrac{\text{Im}(z_1)}{k-l} - \dfrac{\text{Im}(z_2)}{k+l}  \right].
	\end{equation*}
	where $z_1 = e^{i(k-l)T} - 1, \quad z_2 = e^{i(k+l)T} - 1$, hence equation (\ref{ikl}) is proven.
\end{proof}

\begin{remark}
	This optimization essentially suppresses high-frequency errors by analyzing the integral expression and avoiding direct calculation of the difference of oscillatory terms, thereby improving the accuracy of covariance kernel calculations in the inverse problem and ultimately improving the recovery effect of random source terms.
\end{remark}
The following theorem illustrates the stability of the reconstruction process for $f$ and $g$.

\begin{theorem}
	The recovery of source functions $f$ and $g$ in the inverse source problem is unstable, and the following estimate holds
	\[ \left| \int_0^T \frac{h(\tau) \sin \left( \sqrt{\lambda_k} (T - \tau) \right)}{\sqrt{\lambda_k}} \, d\tau \right| \leq \frac{1}{\sqrt{\lambda_k}} \int_0^T h(\tau) \, d\tau \to 0 \quad \text{as} \quad  k \to \infty,
	\]
	\[ E\left[\int_0^T \frac{\sin^2\left( (T - \tau) \sqrt{\lambda_k} \right)}{\lambda_k} \, d\tau\right] \leq \frac{T}{\lambda_{k}}. \]
\end{theorem}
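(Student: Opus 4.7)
The plan is to establish the two estimates and then articulate why they force instability of the reconstruction formulas \eqref{f} and \eqref{g}. Both bounds are elementary pointwise estimates, so the real content of the theorem lies in interpreting what they say about the ill-posedness of the inverse map.

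First I would prove the bound on $\int_0^T \tfrac{h(\tau)\sin(\sqrt{\lambda_k}(T-\tau))}{\sqrt{\lambda_k}}\,d\tau$. Since $h\in L^\infty(0,T)$ is non-negative by assumption, I can apply the triangle inequality for integrals and use $|\sin(\sqrt{\lambda_k}(T-\tau))|\le 1$ to pull $1/\sqrt{\lambda_k}$ outside and obtain $\frac{1}{\sqrt{\lambda_k}}\int_0^T h(\tau)\,d\tau$. The convergence to $0$ as $k\to\infty$ then follows from $\lambda_k\to\infty$ (the eigenvalues of $-\Delta$ with Dirichlet boundary accumulate at infinity, as stated in the remark preceding the direct-problem analysis).

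Next I would handle the second estimate. The integrand $\sin^2((T-\tau)\sqrt{\lambda_k})/\lambda_k$ is deterministic, so the expectation reduces to the integral itself. Bounding $\sin^2\le 1$ and integrating over $[0,T]$ gives the upper bound $T/\lambda_k$, which again tends to zero.

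Finally I would translate these two decay estimates into the instability claim. From the reconstruction identity \eqref{f}, the coefficient $f_k$ is obtained by dividing $E[u_k(T)]$ by the quantity estimated in the first bound; since this denominator decays at rate at least $\lambda_k^{-1/2}$, any noise $\varepsilon_k$ on $E[u_k(T)]$ is amplified by a factor of order $\sqrt{\lambda_k}$, so the map from data to $f_k$ cannot be continuous in any reasonable topology as $k$ ranges over $\mathbb{N}$. The same phenomenon, with amplification of order $\lambda_k$, applies to the reconstruction \eqref{g} of $g_k g_l$ via $\mathrm{Cov}(u_k,u_l)$ once one notes that $I_{kl}$ is controlled by the second estimate. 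The main conceptual step, rather than any technical difficulty, is precisely this last one: converting the decay of the forward kernel coefficients into a statement about unbounded amplification of perturbations in the inverse direction. The calculations themselves are routine, and no deep tool beyond $|\sin|\le 1$ and $\lambda_k\to\infty$ is required.
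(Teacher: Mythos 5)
Your proposal is correct: the two bounds follow exactly as you say from $|\sin|\le 1$, the non-negativity of $h$, and $\lambda_k\to\infty$, and your interpretation of the decaying denominators in \eqref{f} and \eqref{g} as noise amplification of order $\sqrt{\lambda_k}$ (resp.\ $\lambda_k$) is the intended reading of the instability claim. The paper states these estimates without any proof, treating them as immediate, so your argument is essentially the same (and slightly more complete, since it spells out the passage from kernel decay to unboundedness of the inverse map).
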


\subsubsection{Numerical experimental design}
In this subsubsection, we will implement a numerical experiment design for simulating the forward problem of a stochastic wave equation driven by a finite-jump Lévy process and reconstructing the source function in one dimension. In the space-time domain, we set $x \in [0, L], \, t \in [0, T], \, L = \pi, \, T = 1$. Specifically, we first define the spatial grid and time grid
\[x_i = i \Delta x, \quad i = 0, 1, 2, \dots, N_x, \quad \Delta x = \frac{L}{N_x}, \quad N_x = 100. \]
\[t_j = j \Delta t, \quad j = 0, 1, 2, \dots, N_t, \quad \Delta t = \frac{T}{N_t}, \quad N_t = 1000. \]

Before simulating the direct problem, we need to discretize the kernel function and define the calculation method for its integral weights. Firstly, we discretize the eigenfunctions. Since we are studying a one-dimensional wave equation (with a Lévy driving term) with Dirichlet boundary conditions in the spatial segment, a set of orthogonal eigenfunctions of the $Laplace$ operator can be taken as
$\varphi_k(x_i) = \sqrt{\frac{2}{L}} \sin(k x_i), \, k = 1, \dots, K, \, i = 0, \dots, N_x.$ satisfying\[ -\frac{d^2}{dx^2} \varphi_k(x) = k^2 \varphi_k(x), \quad \varphi_k(0) = \varphi_k(L) = 0. \]
Define the kernel function $A_k(t_j) = \frac{\sin(k (T - t_j))}{k}, \, j = 0, \dots, N_t$. The product-of-integrals weights (matrix $I_{kl}$) can be written as
\[I_{kl} = \frac{1}{T} \int_0^T A_k(\tau) A_l(\tau) d\tau \approx \frac{1}{T} \sum_{j=0}^{N_t-1} A_k(t_j) A_l(t_j) \Delta t, \quad k, l = 1, \dots, K.\]
We then use spectral decomposition to expand the solution and source term in terms of a set of orthogonal eigenfunctions, projecting the original PDE onto these eigenmodes
\[ u(x,t) = \sum_{k=1}^{K} u_k(t) \varphi_k(x), \quad f(x) = \sum_{k=1}^{K} f_k \varphi_k(x), \quad g(x) = \sum_{k=1}^{K} g_k \varphi_k(x).
\]
When calculating the modal coefficients of the true value source terms, we need to perform numerical integration on $f_k$, $g_k$, and $u_k(T)$\\
\[
f_k = \sum_{i=0}^{N_x} f_{\text{true}}(x_i) \varphi_k(x_i) \Delta x, \quad g_k = \sum_{i=0}^{N_x} g_{\text{true}}(x_i) \varphi_k(x_i) \Delta x.
\]
\[
u_k(T) = (f_k + b g_k) \int_0^T A_k(\tau) \, d\tau + \sigma g_k \int_0^T A_k(\tau) \, dW(\tau) + g_k \sum_{s_j \leq T} A_k(s_j) J_j.
\]
The selection rules for the parameters of the finite Lévy jump process are: a) The number of jumps $N_J^{(n)} \sim \text{Poisson}(\lambda T)$. b) The jump times $\{s_j^{(n)}\}_{j=1}^{N_J^{(n)}}$ are uniformly sampled from $[0, T]$. c) The jump magnitudes $\{J_j^{(n)}\}_{j=1}^{N_J^{(n)}} \sim \mathcal{N}(0, \ sigma_J^2)$. The Brownian increments satisfy: $\text{For each time step } j = 1, \dots, N_t, \, \Delta W_j \sim \mathcal{N}(0, \Delta t). $
For the discrete treatment of each modal contribution in $u_k^{(n)}$, the deterministic time integral is discretized using the equidistant rectangular method, the random Brownian term is discretized using the classical Euler–Maruyama method, and the jump term is directly calculated using the continuous jump time obtained from sampling to compute $A_k(s_m)$
\[
u_k^{(n)} \approx \left( f_k + b g_k \right) \sum_{j=0}^{N_t-1} A_k(t_j) \Delta t + \sigma g_k \sum_{j=1}^{N_t} A_k(t_{j-1}) \Delta W_j + g_k \sum_{m=1}^{N_J^{(n)}} A_k( s_m) J_m.
\]
Next, spatial reconstruction and noise addition are performed, with noise selected as the commonly used Gaussian noise
\[
u_k^{(n)}(x_i, T) = \sum_{k=1}^{K} u_k^{(n)} \varphi_k(x_i), \quad u_{\text{obs}}(x_i) = u^{(n)}(x_i, T) + \epsilon_i^{(n)}, \quad \epsilon_i^{(n)} \sim \mathcal{N}(0, \sigma_\epsilon^2).
\]
Then, the overall mean and covariance are obtained through data statistics and projection.
\begin{itemize}[left=5pt]
	\item \textbf{Modal projection}
	\[
	U_k^{(n)} = \sum_{i=0}^{N_x} u_{\text{obs}}^{(n)}(x_i) \varphi_k(x_i) \Delta x, \quad k = 1, \dots, K, \quad n = 1, \dots, N_{\text{samples}}.
	\]
	
	\item \textbf{Sample mean}
	\[
	\bar{U}_k = \frac{1}{N_{\text{samples}}} \sum_{n=1}^{N_{\text{samples}}} U_k^{(n)}.
	\]
	
	\item \textbf{Sample covariance}
	\[
	C_{kl} = \frac{1}{N_{\text{samples}} - 1} \sum_{n=1}^{N_{\text{samples}}} (U_k^{(n)} - \bar{U}_k)(U_l^{(n)} - \bar{U}_l), \quad k, l = 1, \dots, K.
	\]
\end{itemize}

The handling of the reverse source problem is mainly achieved by reconstructing $f$ and $g$ using $(\ref{f})$ and $(\ref{g})$\\
\textbf{Deterministic source $f$}
The theoretical relationship
\[
\bar{U}_k = f_k = \sum_{j=0}^{N_t-1} A_k(t_j) \Delta t \quad \Rightarrow \quad f_k^{\text{rec}} = \bar{U}_k = \sum_{j=0}^{N_t-1} A_k(t_j) \Delta t.
\]
Spatial reconstruction formula
\[
f_{\text{rec}}(x_i) = \sum_{k=1}^{K} f_k^{\text{rec}} \varphi_k(x_i).
\]
\textbf{Random source $g$}
The model covariance is discretized as
\[
\hat{C}_{kl}(g) = \gamma g_k g_l I_{kl} + \sigma_\epsilon^2 \delta_{kl}.
\]
The objective function is expressed as follows
\[
J(g_1, \dots, g_K) = \sum_{k=1}^{K} \sum_{l=1}^{K} (C_{kl} - \hat{C}_{kl}(g))^2 + \alpha \sum_{k=1}^{K} g_k^2.
\]
where $\alpha$ is the regularization parameter.
Optimization yields $\{g_k^{\text{rec}}\}$, so that
\[
g_{\text{rec}}(x_i) = \sum_{k=1}^{K} g_k^{\text{rec}} \varphi_k(x_i).
\]
Finally, the $L^2$ error estimate is given by the following formula
\[
\epsilon_f = \frac{\sqrt{\sum_{i=0}^{N_x} \left(f_{\text{true}}(x_i) - f_{\text{rec}}(x_i)\right)^2 \Delta x}}{\sqrt{\sum_{i=0}^{N_x} f_{\text{true}}(x_i)^2 \Delta x}},
\]
\[
\epsilon_g = \frac{\sqrt{\sum_{i=0}^{N_x} \left(g_{\text{true}}(x_i) - g_{\text{rec}}(x_i)\right)^2 \Delta x}}{\sqrt{\sum_{i=0}^{N_x} g_{\text{true}}(x_i)^2 \Delta x}}.
\]

\begin{remark}
	In the random source term reconstruction process, we reconstruct $g$ rather than $g^2$. It should be noted that since the covariance of the observed data is insensitive to the sign of $g(x)$, the sign of its modal coefficient $g_k$ is ambiguous. Therefore, we perform phase correction in the code to obtain results that are more consistent with the actual situation, but this usually requires additional prior information. However, in practical applications, since the true sign is unknown, such direct correction is often infeasible, necessitating the reconstruction of $g^2$ or the use of other methods to address the issue of sign uncertainty.  
\end{remark}

\subsubsection{Numerical experiment results}
For simple case, we can assume that the true values are $f_{true}(x)=\sin x$ and $g_{true}(x)=\sin x$.
Under two noise levels, $\sigma=0.001$ and $\sigma=0.005$, the reconstruction results are as follows.
\begin{figure}[h]  
	\centering  
	\includegraphics[width=0.7\textwidth]{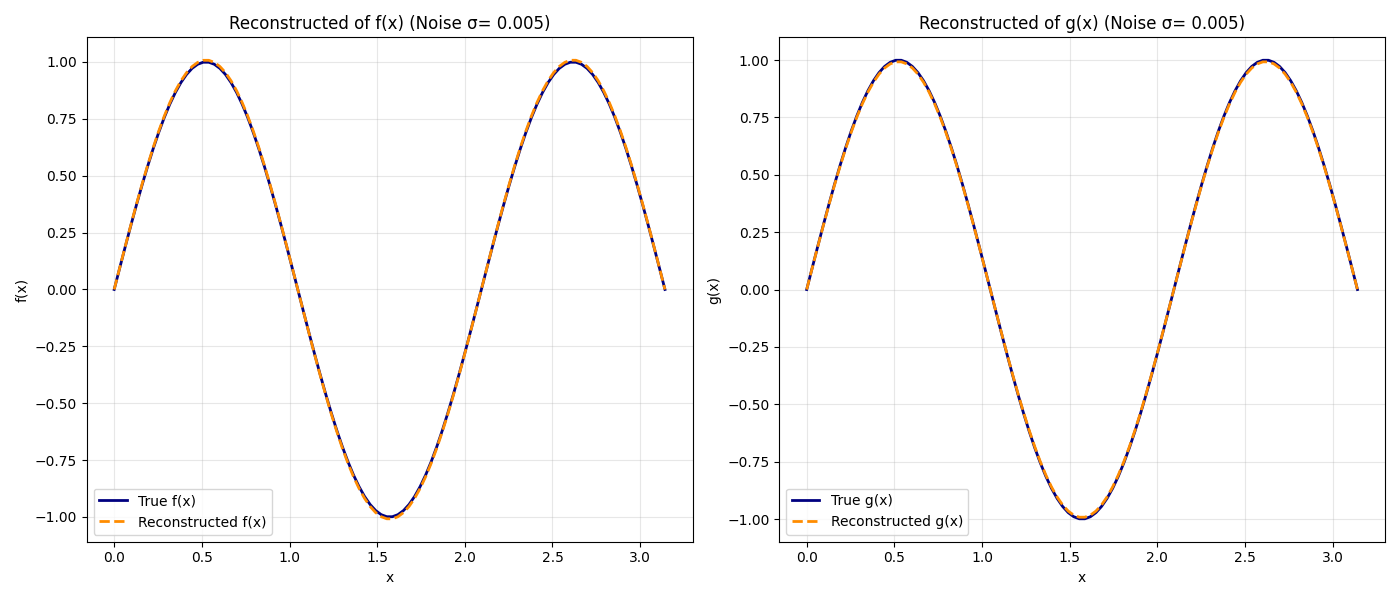}  
	\caption{Reconstructed effect with noise $\sigma=0.005$}  
	\label{fig3}  
\end{figure}
\newpage
\begin{figure}[h]  
	\centering  
	\includegraphics[width=0.7\textwidth]{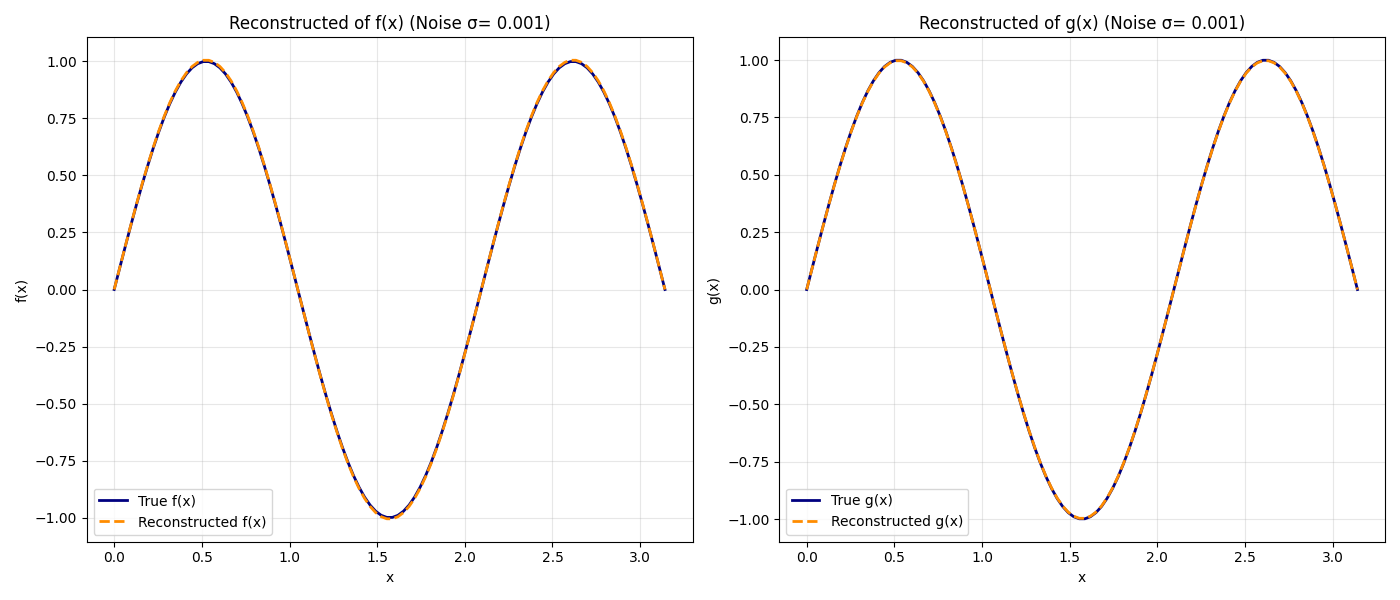}  
	\caption{Reconstructed effect with noise $\sigma=0.001$}  
	\label{fig4}  
\end{figure}

\begin{table}[h!]
	\centering  
	\begin{tabular}{c|c|c}  
		\hline  
		$\sigma$ & 0.001 & 0.005 \\  
		\hline  
		$f$ & 0.0060 & 0.0088 \\  
		\hline  
		$g$ & 0.0022 & 0.0069 \\  
		\hline  
	\end{tabular}
	\caption{Relative $L^2$ error}  
	\label{b1}  
\end{table}

More complicated, we assume the true values are $f_{true}(x)=\sin x$ and $g_{true}(x)=\exp\left( -\left(x - 0.5L\right)^2 \right)$.
Under two noise levels, $\sigma=0.001$ and $\sigma=0.005$, the reconstruction results see Figure \ref{fig5} and Figure \ref{fig6}.

Overall, the results show that the reconstruction under a single Fourier mode with true values provides good characterization of $f$ and $g$. As shown in Table \ref{b1}, the error estimates reach $1e-3$ under both noise levels. In the reconstruction under multiple Fourier modes, as shown in Table \ref{b2}, the error estimate is only $1e-2$, especially for the reconstruction of $g$. It can be seen that as the number of Fourier modes increases, the reconstruction process becomes unstable.
\newpage
\begin{figure}[h]  
	\centering  
	\includegraphics[width=0.7\textwidth]{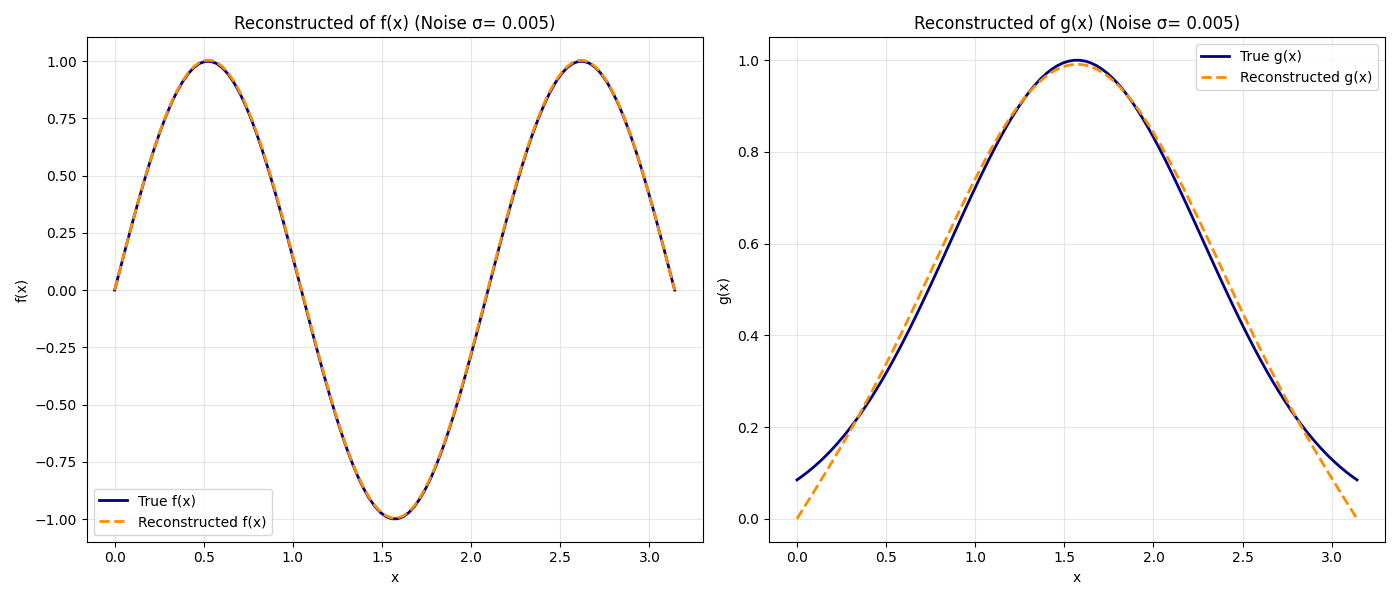}  
	\caption{Reconstructed effect with noise $\sigma=0.005$}  
	\label{fig5}  
\end{figure}
\begin{figure}[h]  
	\centering  
	\includegraphics[width=0.7\textwidth]{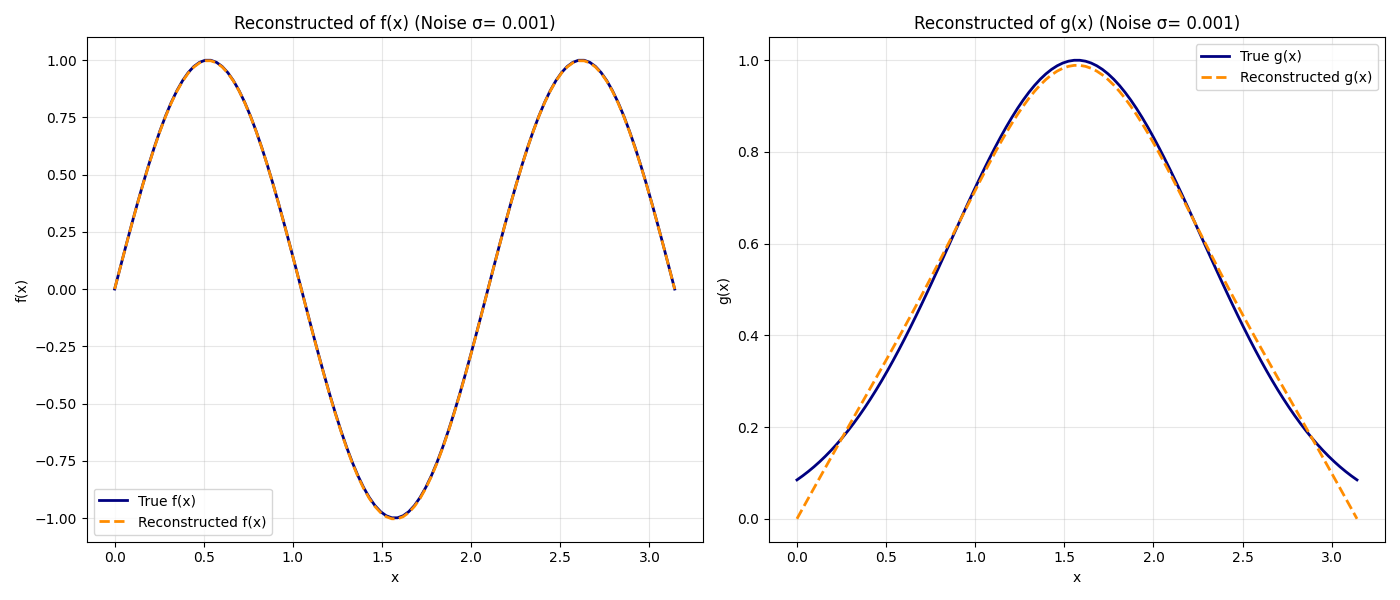}  
	\caption{Reconstructed effect with noise $\sigma=0.001$}  
	\label{fig6}  
\end{figure}
\begin{table}[h!]
	\centering  
	\begin{tabular}{c|c|c}  
		\hline  
		$\sigma$ & 0.001 & 0.005 \\  
		\hline  
		$f$ & 0.0033 & 0.0064 \\  
		\hline  
		$g$ & 0.0370 & 0.0410 \\  
		\hline  
	\end{tabular}
	\caption{Relative $L^2$ error}  
	\label{b2}  
\end{table}

\section{Conclusion}
This paper investigates the inverse source problem for the one-dimensional stochastic Helmholtz equation without attenuation and the stochastic wave equation driven by a finite-jump Lévy process. For the ill-posedness of the inverse problem of the stochastic Helmholtz equation without attenuation, a new computational method is proposed to significantly alleviate the difficulty of reconstructing the strength; for the stochastic wave equation driven by a finite-jump Lévy process, a stability estimate for the direct problem is obtained. After analyzing the instability of the reconstruction process in the inverse problem, a method is proposed to reconstruct the source function using the data of the wave field at the final time point $u(x,T)$. The reconstruction of the source function in the inverse problems of the two types of equations is achieved through two contrasting methods: one is a point-by-point recovery process combined with a multi-frequency fusion regularization method, and the other is a recovery process based on a global modal spectral decomposition method. Future research will consider the well-posedness problem of the inverse source problem for stochastic wave equations driven by infinite-jump Lévy processes.
\section*{Acknowledgments}
The work was supported by NSFC Project (12431014). We would like to express our sincere gratitude to Professor Li Peijun for his professional guidance in research ideas, argumentation analysis, and paper writing, which enabled us to  complete the paper successfully.

\bibliographystyle{plain}


\end{document}